\documentclass[aos,preprint]{imsart}

\usepackage{amsmath, amsthm, amssymb}
\usepackage{graphicx}
\usepackage{verbatim}
\usepackage{caption}
\usepackage{subcaption}
\usepackage{fancyvrb}
\usepackage{enumerate}
\usepackage{tabularx}
\usepackage{algorithm}
\usepackage{algorithmic}
\usepackage{multirow}
\usepackage{rotating}

\RequirePackage[OT1]{fontenc}
\RequirePackage[numbers]{natbib}
\RequirePackage[colorlinks,citecolor=blue,urlcolor=blue]{hyperref}

\usepackage{./StefanTexCommandsV2}
\usepackage{hyperref,url}
\usepackage{amsmath,amssymb,amsthm}
\usepackage{tikz}
\usepackage{xspace}
\usepackage{stmaryrd,wasysym,clrscode}
\usepackage{etex,etoolbox}
\usepackage{ifthen}
\usetikzlibrary{patterns,positioning}

\usepackage{thmtools}
\usepackage{thm-restate}

\usepackage{hyperref}


\def\[#1\]{\begin{align}#1\end{align}}
\def\(#1\){\begin{align*}#1\end{align*}}

\definecolor{NAColor}{rgb}{.75,0,.75}

\def\argmin{\operatornamewithlimits{arg\,min}}

\newcommand{\bprf}{\begin{proof}}
\newcommand{\eprf}{\end{proof}}
\newcommand{\blem}{\begin{lemma}}
\newcommand{\elem}{\end{lemma}}



\newcommand{\Lstar}{\mathcal{L}}

\DeclareMathOperator{\Regret}{Regret}

\newcommand{\eqdef}{\stackrel{\mathrm{def}}{=}}
\newcommand{\bP}{\mathbb{P}}

\newcommand{\bE}{\mathbb{E}}
\newcommand{\sF}{\mathcal{F}}
\newcommand{\sH}{\mathcal{H}}

\newcommand{\bR}{\mathbb{R}}

\newcommand{\sN}{\mathcal{N}}

\DeclareMathOperator{\supp}{supp}


\newtheorem{theorem}{Theorem}[section]
\newtheorem{lemma}[theorem]{Lemma}

\newtheorem{proposition}[theorem]{Proposition}
\newtheorem{corollary}[theorem]{Corollary}

\newtheorem{assumption}{Assumption}
\theoremstyle{definition}


\graphicspath{{.}}


\newcommand{\sm}[1]{{\scriptstyle #1}}


\newcommand\MainRegret{\Omega}
\newcommand\CostOfSparsity{\Lambda}
\newcommand\Quadratic{Q}
\renewcommand{\lg}{\log_2}

\makeatletter
\renewcommand*{\@fnsymbol}[1]{\ensuremath{\ifcase#1\or  \or \dagger\or \ddagger\or
   \mathsection\or \mathparagraph\or \|\or **\or \dagger\dagger
   \or \ddagger\ddagger \else\@ctrerr\fi}}
\makeatother

\begin{document}


\begin{frontmatter}

\title{The Statistics of \\ Streaming Sparse Regression}
\runtitle{Streaming Sparse Regression}

\begin{aug}
\author{\fnms{Jacob} \snm{Steinhardt}\ead[label=e1]{jsteinhardt@cs.stanford.edu}\thanksref{t1}},
\author{\fnms{Stefan} \snm{Wager}\corref{}\ead[label=e2]{swager@stanford.edu}\thanksref{t1}},
\and
\author{\fnms{Percy} \snm{Liang}\ead[label=e3]{pliang@cs.stanford.edu}}

\address{Department of Computer Science \\ Stanford University \\ Stanford, CA-94305 \\ \printead{e1} \\ \printead{e3}}
\address{Department of Statistics \\ Stanford University \\ Stanford, CA-94305 \\ \printead{e2}}
\affiliation{Stanford University}

\thankstext{t1}{JS and SW contributed equally to this paper. JS is supported by a Hertz Foundation Fellowship and an NSF Fellowship; SW is supported by a BC and EJ Eaves Stanford Graduate Fellowship. We are grateful for helpful conversations with Emmanuel Cand\`es and John Duchi.} 
\end{aug}

\runauthor{Steinhardt, Wager, and Liang}

\begin{abstract}
We present a sparse analogue to stochastic gradient descent that is guaranteed to perform well under
similar conditions to the lasso. In the linear regression setup with irrepresentable noise features,
our algorithm recovers the support set of the optimal parameter vector with high probability,
and achieves a statistically quasi-optimal rate of convergence of $\toop{k\log(d)/T}$,
where $k$ is the sparsity of the solution, $d$ is the number of features, and $T$ is the number
of training examples. Meanwhile, our algorithm does not require any more
computational resources than stochastic gradient descent. In our experiments, we find that our method
substantially out-performs existing streaming algorithms on both real and simulated data.

\end{abstract}

\end{frontmatter}

\section{Introduction}
\label{sec:intro}

In many areas such as
astrophysics \citep{adelman2008sixth,battams2014stream},
environmental sensor networks \citep{osborne2012real},
distributed computer systems diagnostics \citep{xu2009detecting},
and advertisement click prediction \citep{mcmahan2013ad},
a system generates a high-throughput {stream} of data in real-time.
We wish to perform parameter estimation and prediction in this streaming setting,
where we have neither memory to store all the data nor time for complex algorithms.
Furthermore, this data is also typically high-dimensional,
and thus obtaining {sparse} parameter vectors is desirable.
This article is about the design and analysis of statistical procedures that
exploit {sparsity} in the {streaming} setting.

More formally, the streaming setting (for linear regression) is as follows:
At each time step $t$, we (i) observe covariates $x_t \in \RR^d$, (ii) make a
prediction $\hat y_t$ (using some weight vector $w_t \in \RR^d$ which we maintain),
(iii) observe the true response $y_t \in \RR$,
and (iv) update $w_t$ to $w_{t+1}$.
We are interested in two measures of performance after $T$ time steps.
The first is \emph{regret}, which is the excess online prediction error
compared to a fixed weight vector $u \in \RR^d$ (typically chosen to be $w^*$, the population loss minimizer):
\begin{align}
\label{eq:regret}
\Regret(u) \eqdef \sum_{t=1}^T (f_t\p{w_t} - f_t\p{u}),
\end{align}
where $f_t(w) = \frac12(y_t - w^\top x_t)^2$ is the squared loss on the $t$-th data point.
The second is the classic \emph{parameter error}, which is 
\begin{align}
\label{eq:parameterError}
\|\hat w_T - w^*\|_2^2,
\end{align}
where $\hat w_T$ is some weighted average of $w_1, \dots, w_T$. 
Note that, while $\Regret(u)$ appears to measure loss on a training set, 
it is actually more closely related to generalization error, since $w_t$ is 
chosen {before} observing $f_t$, and thus there is no opportunity 
for $w_t$ to be overfit to the function $f_t$.

Although the ambient dimension $d$ is large,
we assume that the population loss minimizer $w^* \in \RR^d$ is a $k$-sparse vector, where $k \ll d$.
In this setting, the standard approach to sparse regression is to
use the lasso \citep{tibshirani1996regression} or basis pursuit \citep{chen1998atomic},
which both penalize the $\LI$ norm of the weight vector to encourage sparsity.
There is a large literature showing that the lasso attains good performance
under various assumptions on the design matrix
\citep[e.g.,][]{meinshausen2009lasso,raskutti2010restricted,raskutti2011minimax,van2008high,van2009conditions,zhao2006model}.
Most relevant to us, \citet{raskutti2011minimax} show that the
parameter error behaves as $\oop{k\log(d) / T}$. 
However, these results require solving a global optimization problem over all the points,
which is computationally infeasible in our streaming setting.

In the streaming setting, an algorithm can only store one training example at a
time in memory, and can only make one pass over the data.
This kind of streaming constraint has been studied in the context of, e.g.,
 optimizing database queries 
\citep{alon1996space,flajolet1985probabilistic,munro1980selection},
 hypothesis testing with finite memory \citep{cover1969hypothesis,hellman1970learning},
and online learning or online convex optimization
\citep[e.g.,][]{bottou1998online,crammer2006online,hazan2007adaptive,littlestone1989wm,shalev2011,shalev2007primal,shalev2011stochastic,shamir2013stochastic}.
This latter case is the most relevant to our setting, and the resulting online algorithms are 
remarkably simple to implement and computationally efficient in practice.
However, their treatment of sparsity is imperfect.
For strongly convex functions \citep{hazan2007logarithmic}, one can ignore sparsity altogether and obtain average regret
$\oo{d \log T / T}$, which is clearly much worse than the optimal rate when $k \ll d$.
One could also ignore strong convexity to obtain average regret $\oo{\sqrt{k \log d / T}}$,
which has the proper logarithmic dependence on $d$, but does not have the optimal dependence on $T$.

\begin{figure}[p]
\includegraphics[width=\textwidth]{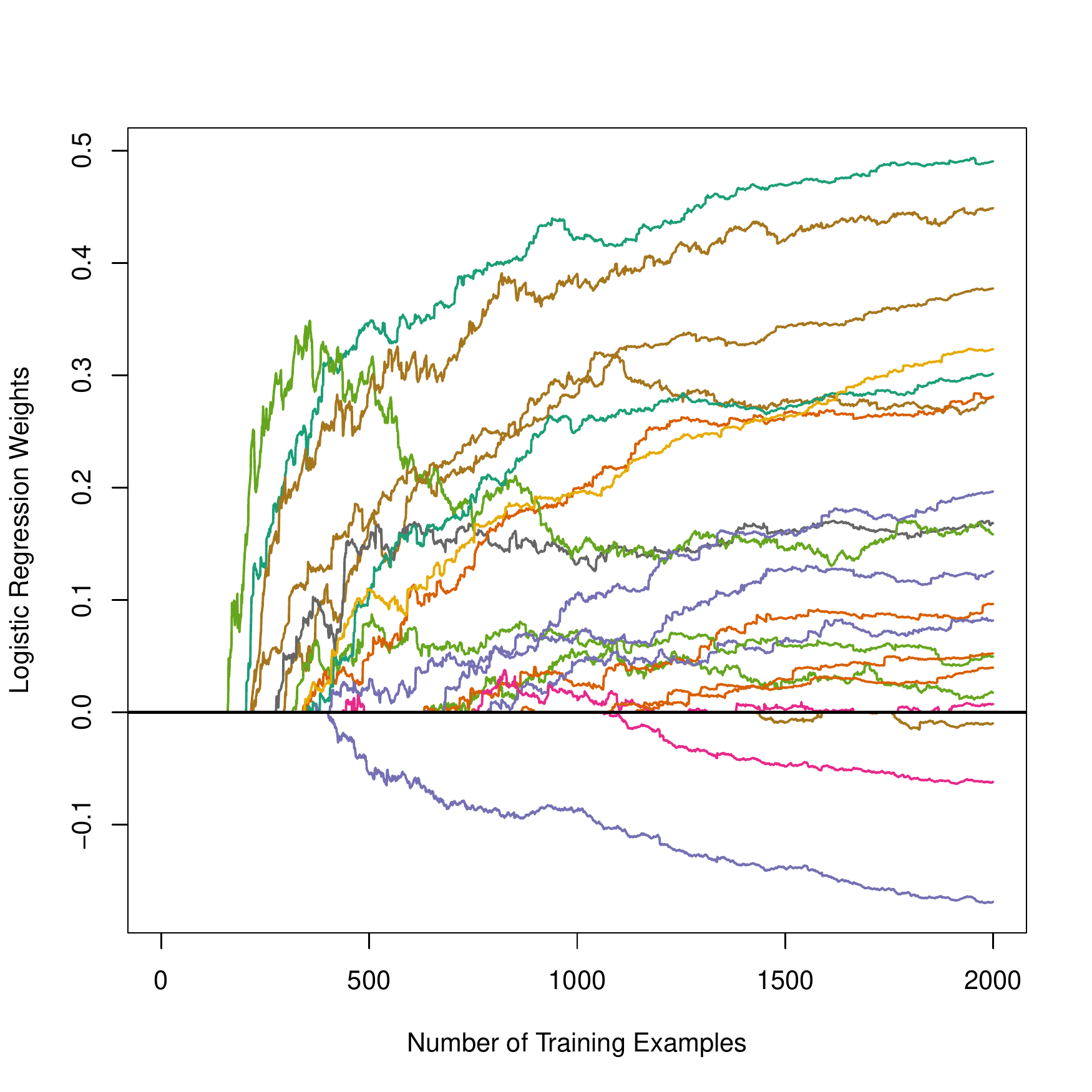}
\caption{Behavior of our Algorithm \ref{alg:stream} as it incorporates the 
first $T = 2,000$ training examples for a logistic regression trained on the 
spambase dataset \citep{bache2013uci}. Due to the streaming nature of the algorithm, 
the parameters are incrementally updated with each new example.
All parameter estimates start at 0; our algorithm then gradually adds variables
to the active set as it sees more training examples and accumulates evidence that
certain variables are informative.
We see that the algorithm found more words with positive weights (i.e., indicative of spam) than
negative weights. In this example, we also used an unpenalized intercept term (not shown)
that was negative. The first positive words selected by the algorithm were \texttt{remove},
\texttt{you}, \texttt{your}, and \texttt{\$}, whereas the first negative words were \texttt{hp}
and \texttt{hpl}; this fits in well with standard analyses \citep{hastie2009elements}. Before
running our algorithm, we centered, scaled, and clipped the features, and randomly re-ordered
the training examples.}
\label{fig:path}
\end{figure}

Our main contribution is an algorithm, streaming sparse regression (SSR),
which takes only $\oo{d}$ time per data point and $\oo{d}$ memory,
but achieves the same convergence rate as the lasso in the batch (offline) setting
under irrepresentability conditions similar to the ones
studied by \citet{zhao2006model}.
The algorithm is very simple, alternating between taking gradients, averaging, and
soft-thresholding.  The bulk of this paper is dedicated
to the analysis of this algorithm, which starts with tools from online convex optimization,
but additionally requires carefully controlling the support of our weight vectors
using new martingale tail bounds.
Recently, \citet{agarwal2012stochastic} proposed a very different
epoch-based $L_p$-norm
algorithm that also attains the desired $\oop{k \log d / T}$ bound on 
the parameter error.
However, unlike our algorithm that is conceptually related to the lasso,
their algorithm does not generate exactly sparse iterates.
Based on our experiments, our algorithm also appears 
to be faster and substantially more accurate in practice.

To provide empirical intuition about our algorithm,
Figure \ref{fig:path} shows its behavior on the spambase
dataset \citep{bache2013uci}, the goal of which is to distinguish spam (1) from non-spam (0)
using 57 features of the e-mail. The plot shows how the parameters change as the algorithm 
sees more data. For the first 159 training examples, all of the weights are zero. Then, as the algorithm 
gets to see more data and amasses more evidence on the association between
various features and the response, it gradually enters new variables into the model.
By the time the algorithm has seen 2000 examples, it has 22 non-zero weights.
A striking difference between Figure \ref{fig:path} and the lasso or least-angle regression
paths of \citet{efron2004least} is that the lasso path moves along straight lines between
knots, whereas our paths look more like Brownian motion once they leave zero. This is 
because Efron et al. vary the $L_1$ regularization for a fixed amount of data, while
in our case the $L_1$ regularization and data size change simultaneously.



\subsection{Adapting Stochastic Gradient Descent for Sparse Regression}

To provide a flavor of our algorithm and the theoretical results involved,
let us begin with classic stochastic gradient descent (SGD),
which is known to work in the non-sparse streaming setting \citep{bottou1998online,robbins1951stochastic,robbins1971convergence,toulis2014statistical}.
Given a sequence of convex loss
functions $f_t(w)$, e.g., $f_t(w) =
\frac12 (y_t - w^\top x_t)^2$ for linear regression with features $x_t$ and response $y_t$,
SGD updates the weight vector as follows:
\begin{equation}
\label{eq:sgd_simple}
w_{t + 1} = w_{t} - \frac{1}{\eta \, t} \, \nabla f_{t}\p{w_{t}} 
\end{equation}
with some step size $\eta > 0$.
As shown by \citet{toulis2014statistical}, if the losses $f_t$ are generated by
a well-conditioned generalized linear model, then the weights $w_t$ will
converge to a limiting Gaussian distribution at a $1/\sqrt{t}$ rate.

\begin{algorithm}[t]
\caption{Streaming sparse regression. $S_{\lambda}$ denotes the soft-thresholding 
operator: $S_{\lambda}(x) = 0$ if $|x| < \lambda$, and $x - \lambda \sign(x)$ otherwise.}
\label{alg:stream}
\begin{algorithmic}
\STATE Input: sequence of loss functions $f_1, \, \ldots, \, f_T$
\STATE Output: parameter estimate $w_T$
\STATE Algorithm parameters: $\eta$, $\lambda$, $\epsilon$
\STATE \vskip -0.1in
\STATE $\theta_1 = 0$
\FOR{$t=1$ {\bfseries to} $T$}
  \STATE $\lambda_t \gets \lambda \sqrt{t+1}$
  \STATE $w_t \gets \frac{1}{\epsilon + \eta \, (t-1)}S_{\lambda_t}\left(\theta_t\right)$ \Comment sparsification step
  \STATE $\theta_{t+1} = \theta_t - [\nabla f_t(w_t) - \eta \, w_t]$ \Comment gradient step
\ENDFOR
\RETURN $w_T$
\end{algorithmic}
\end{algorithm}

While this simple algorithmic form is easy to understand, it is less convenient to extend to exploit sparsity.
Let us then rewrite stochastic gradient
descent using the adaptive mirror descent framework
\citep{beck2009fast,nemirovsky1983problem,orabona2013general}.
With some algebra, one can verify that the update in \eqref{eq:sgd_simple}
is equivalent to the following adaptive mirror descent update:
\begin{align}
\label{eq:sgd_mirror_theta}
&\theta_t = \sum_{s = 1}^{t-1} \nabla f_s\p{w_{s}} \\
\label{eq:sgd_mirror_w}
&w_t = \argmin_w \left\{\frac{\eta}{2} \sum_{s = 1}^{t-1} \Norm{w - w_s}_2^2 + w^{\top}\theta_t\right\}
\end{align}
for $t = 1, \, 2, \, \dots$
At each step, mirror descent solves an optimization problem (usually in closed
form) that (i) encourages weights $w_t$ to be close to previous weights $w_1,
\dots, w_{t-1}$, and (ii) moves towards the average gradient $\theta_t$.

The advantage of using the mirror descent framework
is that it reveals a natural way to induce sparsity:
we can add an $\LI$-penalty to the minimization step
\eqref{eq:sgd_mirror_w}. For some $\lambda > 0$ and $t = 1, \,  2, \, \dots$, we
set
\begin{align}
\label{eq:sparse_mirror_theta}
&\theta_t = \sum_{s = 1}^{t-1} \nabla f_s\p{w_{s}} \\
\label{eq:sparse_mirror_w}
&w_t = \argmin_w \left\{\frac{\eta}{2} \sum_{s = 1}^{t - 1} \Norm{w - w_s}_2^2 + w^{\top}\theta_t + \lambda \sqrt{t+1} \Norm{w}_1 \right\}.
\end{align}
The above update \eqref{eq:sparse_mirror_w} can be efficiently implemented in a streaming setting using
Algorithm \ref{alg:stream}, which is suitable for making online predictions.
We also propose an adaptation (Algorithm \ref{alg:batch}) aimed at classic
parameter estimation;
see Section~\ref{sec:batch} for more details.

These algorithms are closely related to recent proposals in the stochastic
and online convex optimization literature
\citep[e.g.,][]{duchi2010composite,langford2009sparse,shalev2011stochastic,shalev2010trading,xiao2010dual};
in particular, the step \eqref{eq:sparse_mirror_w} can be described as a
proximal version of the regularized dual averaging algorithm of
\citet{xiao2010dual}. These papers, however, all analyze the algorithm making no statistical
assumptions about the data generating process.
Under these adversarial conditions, it is
difficult to provide performance guarantees that take advantage of sparsity.
In fact, the sparsified version of stochastic gradient descent in general
attains weaker worst-case guarantees than even the simple algorithm given in
\eqref{eq:sgd_simple}, at least under existing analyses.

It is well known that the batch lasso works well under some
statistical assumptions
\citep[e.g.,][]{candes2007dantzig,meinshausen2009lasso,raskutti2010restricted,van2008high,van2009conditions,zhao2006model},
but even the lasso can fail spectacularly when these assumptions do not hold,
even for i.i.d. data, e.g., Section 2.1 of \citet{candes2009near}.
It is therefore not surprising that statistical assumptions should also be required 
to guarantee good performance for our streaming sparse regression algorithm.


\begin{algorithm}[t]
\caption{Streaming sparse regression with averaging. $S_{\lambda}$ denotes the soft-thresholding 
operator: $S_{\lambda}(x) = 0$ if $|x| < \lambda$, and $x - \lambda \sign(x)$ otherwise.}
\label{fig:pseudocode}
\label{alg:batch}
\begin{algorithmic}
\STATE Input: sequence of functions $f_1, \, \ldots, \, f_T$
\STATE Output: parameter estimate $\hat w_T$
\STATE Algorithm parameters: $\eta$, $\lambda$, $\epsilon$
\STATE \vskip -0.1in
\STATE $\hat{w}_0 = 0$, $\theta_1 = 0$
\FOR{$t=1$ {\bfseries to} $T$}
  \STATE $\lambda_t \gets t^{\frac{3}{2}} \lambda$
  \STATE $w_t \gets \frac{1}{\epsilon + \eta \, t(t-1) / 2}S_{\lambda_t}\left(\theta_t\right)$ \Comment sparsification step
  \STATE $\theta_{t+1} \gets \theta_t - t\left[\nabla f_t(w_{t}) - \eta \, w_t\right]$ \Comment gradient step
  \STATE $\hat{w}_t \gets \left(1-\frac{2}{t+1}\right)\hat{w}_{t-1} + \frac{2}{t+1}w_t$ \Comment averaging step
\ENDFOR
\RETURN $\hat{w}_T$
\end{algorithmic}
\end{algorithm}

The following theorem gives a flavor (though not the strongest) for the kind of results proved in this
paper, using simplified assumptions and restricting attention to linear regression. 
As we will show later, the orthogonality constraint on the non-signal features is not
in fact needed and an irrepresentability-like condition on the design is enough.

\begin{theorem}[parameter error with uncorrelated noise]
\label{theo:first}
Suppose that we are given an \emph{i.i.d.} sequence of data points $(x_1, \, y_1), \, (x_2, \,  y_2), \dots \in \RR^d \times \RR$ 
satisfying $y_t = (w^*)^\top x_t + \varepsilon_t$, where $S \eqdef \supp(w^*)$
has size $k$ and $\varepsilon_t$ is centered noise.
Let $x_{t}\sm{[S]}$ denote the coordinates of $x_t$ indexed by $S$ and $x_{t}\sm{[\neg S]}$ the 
coordinates in the complement of $S$. Also suppose that 
\begin{align*}
&\EE{\varepsilon_tx_t} = 0, && \EE{x_{t}\sm{[\neg S]} y_{t}} = 0,\\
&\EE{x_{t}\sm{[\neg S]}x_{t}\sm{[S]}^{\top}} = 0, && \lambda_{\min}\left(\EE{x_{t}\sm{[S]}x_{t}\sm{[S]}^{\top}}\right) > 0
\end{align*}
for all $t \in \NN$, where $\lambda_{\min}(M)$ denotes the smallest eigenvalue of $M$.
Then for sufficiently large $\lambda$ and sufficiently small $\eta$, 
if we run Algorithm~\ref{alg:batch} on $\{(x_t, \, y_t)\}_{t=1}^T$, 
with the squared loss $f_t(w) = \frac12 (y_t - w^\top x_t)^2$, 
we will obtain a parameter vector $\hat{w}_T$ with $\supp(\hat w_T) \subseteq S$ satisfying
\begin{equation}
\label{eq:bound_first}
\|\hat{w}_T-w^*\|_2^2 = \oop{\frac{k\log(d\log(T))}{T}},
\end{equation}
where $\mathcal{O}_P$ is a with-high-probability version of $\mathcal{O}$ notation.\footnote{More
specifically, in this paper, we use the notation
$x(T) = \mathcal{O}_P(y(T))$ if $x(T) \leq cy(T)\log(1/\delta)$ with probability $1-\delta$, 
for some constant $c$ that is independent of $T$ or $\delta$.}
\end{theorem}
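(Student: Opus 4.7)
My plan is to decompose the argument into two essentially decoupled pieces: a high-probability support-recovery step showing $\mathrm{supp}(w_t) \subseteq S$ for every $t \leq T$, and a strongly-convex online-to-batch analysis of the $k$-dimensional subproblem that remains after restricting to $S$. The two pieces interact only through the size of the soft-thresholding parameter $\lambda$ needed to make the support-recovery event occur with high probability; the bias introduced by the $L_1$ shrinkage on the signal coordinates is then a separate contribution to parameter error.

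For support recovery, I would argue by strong induction on $t$ that, on an event $\mathcal{E}$ to be defined, $|\theta_t[j]| < \lambda_t = \lambda t^{3/2}$ for every $j \in \neg S$ and every $t \leq T$. If this holds through step $t-1$, then soft-thresholding forces $w_s[\neg S] = 0$ for all $s < t$, so $\nabla f_s(w_s)[j] = -(\varepsilon_s + (w^* - w_s)_S^\top x_{s,S}) x_s[j]$ for $j \in \neg S$, and hence
\begin{align*}
\theta_t[j] = \sum_{s=1}^{t-1} s \bigl(\varepsilon_s + (w^* - w_s)_S^\top x_{s,S}\bigr) x_s[j].
\end{align*}
The orthogonality assumptions $\bE[\varepsilon_s x_s[j]] = 0$ and $\bE[x_s[j]\, x_{s,S}^\top] = 0$ make this a martingale in $t$ with respect to the natural filtration, whose $s$-th increment has magnitude $O(s)$. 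A Freedman-style Bernstein bound gives $|\theta_t[j]| \lesssim t^{3/2}\sqrt{\log(1/\delta)}$ at any fixed $t$; to make the bound uniform, I would use dyadic peeling over blocks $t \in [2^k, 2^{k+1})$ so that $\theta_t/\lambda_t$ varies by only a constant factor on each block, and apply a maximal martingale inequality per block. The $\log_2 T$ blocks contribute only a $\log\log T$ overhead under sub-Gaussian tails, and a union bound over $|\neg S| \leq d$ coordinates picks up the $\log d$ factor. Taking $\lambda \gtrsim \sqrt{\log(d\log T / \delta)}$ then ensures $\mathcal{E}$ holds with probability at least $1-\delta$.

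Conditional on $\mathcal{E}$, the iterates $w_t$ live entirely in the $k$-dimensional subspace indexed by $S$, and the update reduces to a step-weighted regularized dual averaging algorithm for the population loss $w \mapsto \bE[\frac12(y_t - w^\top x_{t,S})^2]$, which is strongly convex with parameter $\lambda_{\min}(\bE[x_{t,S} x_{t,S}^\top]) > 0$. The step-$s$ gradient is weighted by $s$ and the output averages $w_s$ with weight $2s/(T(T+1))$, i.e.\ the Nesterov-style polynomial averaging that achieves the optimal $O(1/T)$ rate for strongly convex stochastic optimization. A standard online-to-batch conversion, using $\bE[\|\nabla f_t(w^*)_S\|_2^2] \lesssim k$, bounds the variance contribution to $\|\hat w_T - w^*\|_2^2$ by $O(k/T)$. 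The $L_1$ shrinkage additionally introduces a per-coordinate bias of order $\lambda_t/(\epsilon + \eta t(t-1)/2) \sim \lambda/(\eta \sqrt{t})$, which summed over $k$ signal coordinates and evaluated at the polynomially-averaged iterate contributes $O(k\lambda^2/(\eta^2 T))$ to the squared error. Substituting $\lambda^2 \asymp \log(d\log T/\delta)$ yields the stated $\oop{k\log(d\log T)/T}$ bound.

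The hardest step is the uniform-in-$t$ martingale control, because a naive union bound over $t$ would produce a $\log T$ rather than a $\log\log T$ factor inside the logarithm and so give the wrong rate. The essential trick is dyadic peeling combined with a maximal Bernstein inequality on each block. A subtle related issue is that the inductive hypothesis used to identify $\theta_t[j]$ as a martingale requires $\mathcal{E}$ to hold through step $t-1$; since the conclusion of the induction is exactly this event, the argument must be set up as a stopping-time argument that controls the stopped martingale up to the first $t$ at which $\mathcal{E}$ fails, and then verifies that this stopping time exceeds $T$ with high probability.
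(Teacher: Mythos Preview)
Your support-recovery argument matches the paper's: a martingale bound on $\theta_t[j]$ for $j \notin S$, made uniform in $t$ via dyadic peeling (packaged in the paper as an adaptive Azuma inequality). Your stopping-time remark is correct and is made explicit in the paper only for the harder irrepresentable case.

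The gap is in the $k$-dimensional analysis after restriction to $S$. Two issues. First, each $f_t$ is a rank-one quadratic and hence not strongly convex; only the population loss $\mathcal{L}$ is. Off-the-shelf $O(1/T)$ strongly-convex online-to-batch results do not directly apply to these regularized dual-averaging iterates, and the paper needs a separate supermartingale argument to convert expected strong convexity into a pathwise inequality that can be used inside the regret bound. Second, and more seriously, your bias--variance split is not a clean decomposition: the unthresholded sequence $\theta_t/c_t$ that would carry the ``variance'' is built from gradients evaluated at the \emph{thresholded} iterates $w_s$, so it is not the trajectory of any unregularized algorithm, and no standard result bounds its error by $O(k/T)$.

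The paper avoids this coupling problem by staying inside the mirror-descent regret bound for the actual algorithm, which decomposes as $\Omega + \Lambda + Q$: the main term $\Omega$ is $O(kT)$ once sparsity holds, $Q$ is absorbed by expected strong convexity via the supermartingale lemma just mentioned, and the cost-of-sparsity term $\Lambda = \sum_t (\lambda_{t-1}-\lambda_t)(\|w_t\|_1 - \|w^*\|_1)$ is exactly what encodes the $L_1$ effect on the signal coordinates. The step you are missing is that $\Lambda$ is bounded \emph{recursively}: Cauchy--Schwarz gives $\Lambda \lesssim \lambda\sqrt{k \sum_t t\|w_t-w^*\|_2^2}$, strong convexity of $\mathcal{L}$ turns the inner sum into (essentially) the weighted regret itself, and a quadratic self-bounding lemma closes the loop to yield $\Lambda = O(k\lambda^2 T/\alpha)$. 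This recursive control of $\Lambda$ is precisely what produces your $k\lambda^2/T$ contribution, but it is the paper's main technical device and does not reduce to a one-line bias estimate.
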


The bound \eqref{eq:bound_first} matches the minimax optimal rate
for sparse regression when $d \gg k$ \citep{raskutti2011minimax}, namely
\begin{equation}
\label{eq:minimax}
\|\hat{w}_T-w^*\|_2^2 =\oop{\frac{k\log(d)}{T}},
\end{equation}
to within a factor of $1 + \frac{\log \log(T)}{\log (d)}$,
which is effectively bounded by a constant since $\log \log(T) / \log(d) \leq 5$
in any reasonable regime.\footnote{The extra $\log\log T$ term can be understood in terms of the
law of the iterated logarithm. Our results requires us to bound the behavior of
the algorithm for all $t = 1, \, ..., \,  T$; thus,  we need to analyze multiple $t$-scales
simultaneously, and an extra $\log\log T$ term appears. This is exactly the same phenomenon
that arises when we study the scaling of the limsup of a random walk: although the pointwise
distribution of the random walk scales as $\sqrt{T}$, the limsup scales as $\sqrt{T \log\log T}$.}

\subsection{Related work}

There are many existing online algorithms for solving 
optimization problems like the lasso. For each of these, we will state their rate 
of convergence in terms of the rate at which the squared parameter error $\|\hat{w}_T-w^*\|^2_2$ 
decreases as we progress along an infinite stream of i.i.d. data.
As discussed above, the simplest online algorithm is the classical stochastic gradient descent 
algorithm, which achieves error $\mathcal{O}({{d}/{T}})$ under statistical assumptions. A later family 
of algorithms, comprising the exponentiated gradient algorithm \citep{kivinen1997} and the family of $p$-norm 
algorithms \citep{gentile2003robustness}, achieves error $\mathcal{O}(\sqrt{{k\log(d)}/{T}})$; while 
$d$ has been replaced by $k\log(d)$, the algorithm no longer achieves the optimal rate in $T$.

There is thus a tradeoff in existing work
between better dependence on the dimension and worse asymptotic convergence.
In contrast, our approach simultaneously achieves good performance in terms 
of both $d$ and $T$. 
Given statistical assumptions, our algorithm
satisfies tighter excess loss bounds 
than existing sparse SGD-like algorithms
\citep[e.g.,][]{duchi2010composite,langford2009sparse,mcmahan2011follow,shalev2011stochastic,shalev2010trading,xiao2010dual}.
\citet{agarwal2012stochastic} obtain similar theoretical bounds to us
using a very different algorithm, namely an epoch-based $L_p$-norm
regularized mirror descent algorithm. In our experiments, it appears that
our more lasso-like streaming algorithm achieves better performance, both
statistically and computationally.

In other work, \citet{gerchinovitz2013sparsity} derived strong adversarial ``sparsity regret
bounds'' for an exponentially weighted Bayes-like algorithm with a heavy-tailed
prior. However, as its implementation requires the use of Monte-Carlo methods,
this algorithm may not be computationally competitive with efficient
$\LI$-based methods. There has also been much work beyond that already discussed 
on solving the lasso in the online or streaming setting, such as 
\citet{garrigues2009homotopy} and \citet{yang2010online}, but none of these 
achieve the optimal rate.

Finally, we emphasize that there are paradigms other than streaming for doing regression on large datasets.
In lasso-type problems, the use of pre-screening rules to remove variables from consideration can
dramatically decrease practical memory and runtime requirements. Some examples 
include strong rules \citep{tibshirani2012strong} and SAFE rules \citep{el2010safe}.
Meanwhile, \citet{fithian2014local} showed that, in locally imbalanced logistic regression problems,
it is often possible to substantially down-sample the training set without losing much statistical information;
see also \citep{alaoui2014fast,raskutti2014statistical} for related ideas.
Comparing the merits of streaming algorithms to those of screening or subsampling
methods presents an interesting topic for further investigation.

\subsection{Outline}

We start in Section~\ref{sec:main} by 
precisely defining our theoretical setting and providing our main theorems with some intuitions.
We then demonstrate the empirical performance of our algorithm on simulated data
(Section~\ref{sec:simulations}) and a genomics dataset
(Section~\ref{sec:application}).
In Section~\ref{sec:sparse},
we use the adaptive mirror descent framework from online convex
optimization to lay the foundation of our analysis.
We then leverage statistical assumptions to provide tight control over the
terms laid out by the framework (Sections~\ref{sec:statistical} and \ref{sec:expected_cvx}),
resulting in bounds on the prediction error of Algorithm~\ref{alg:stream}.
In Section~\ref{sec:batch}, we adapt our algorithm via weighted averaging
to obtain rate-optimal parameter estimates (Algorithm~\ref{alg:batch}).
Finally, in Section~\ref{sec:irrep},
we weaken our earlier assumptions to an irrepresentability condition similar
to the one given in \citet{zhao2006model}. 
Longer proofs are deferred to the appendix.


\section{Statistical Properties of Streaming Sparse Regression}
\label{sec:main}

\subsection{Theoretical Setup}
\label{sec:setup}

We assume that we are given a sequence of loss functions $f_1, \, f_2,$
$\ldots, \, f_T$ drawn from some joint distribution.
Our algorithm produces a sequence $w_1, \, w_2,$ $\ldots, \, w_T$, where each $w_t$ 
depends only on $f_1, \, f_2$, $\ldots, \, f_{t-1}$.

Our main results depend on the following four assumptions.

\begin{enumerate}
\item {\bf Statistical Sparsity:} There is a fixed expected loss function $\Lstar$ such that
$$ \EE{f_t \mid f_1, \, \dots, \, f_{t - 1}} = \Lstar \; \eqfor \; t = 1, \, 2, \, \dots$$
Moreover, the minimizer $w^*$ of the loss $\Lstar$ satisfies $\|w^*\|_1 \leq R$ and $\supp(w^*) = S$, where $|S| \leq k$.
Define the set of candidate weight vectors:
$$\sH \eqdef \{w : \|w\|_1 \leq R, \supp(w) \subseteq S\}.$$
We note that $\sH$ is not directly available to the statistician, because
she does not know $S$.

\item {\bf Strong Convexity in Expectation:} There is a constant
$\alpha > 0$ such that $\Lstar(w) - \frac{\alpha}{2}\|w\sm{[S]}\|_2^2$ is
convex. Recall that, for an arbitrary vector $w$, $w\sm{[S]}$ denotes the
coordinates indexed by $S$ and $w\sm{[\neg S]}$ denotes the remaining
coordinates.

\item {\bf Bounded Gradients:} The gradients $\nabla
f_t$ satisfy $\|\nabla f_t(w)\|_{\infty} \leq B$ for all $w \in \sH$.

\item {\bf Orthogonal Noise Features:} For our simplest results, we assume that
the noise gradients are mean-zero for all $w \in \sH$: more precisely, for all
$i \not\in S$ and all $w \in \sH$, we have $\nabla \Lstar(w)_i = 0$. In
Section \ref{sec:irrep_front} below, we discuss how we can relax this condition into an
irrepresentability condition.
\end{enumerate}

To gain a better understanding of the meaning of these assumptions, we give some simple
conditions under which they hold for linear regression. Recall that in linear regression, we are given a
sequence of examples $(x_t, \, y_t) \in \RR^d \times \RR$, and have a loss function 
$f_t(w) = \frac{1}{2}(y_t-w^{\top}x_t)^2$. Here, the assumption (1) holds if the $(x_t, \, y_t)$ are 
i.i.d. and the minimizer of
$$\Lstar(w) = \bE\left[\frac{1}{2}(y-w^{\top}x)^2\right] $$
is $k$-sparse. Meanwhile, we can check that $\Lstar$ is a quadratic function with leading term
$\frac{1}{2}w^{\top}\EE{xx^{\top}}w, $
and so (2) holds as long as $\Cov{x\sm{[S]}} \succeq \alpha I$. Next, $\nabla f_t(w) = (y-w^{\top}x)x$, 
so $\|\nabla f_t(w)\|_{\infty} \leq |y_t| \|x_t\|_{\infty} + \|w\|_1\|x_t\|_{\infty}^2$. Hence, 
if we assume that $\|x_t\|_{\infty} \leq B_x$ and $|y_t| \leq B_y$, assumption (3) 
holds with $B = B_xB_y + RB_x^2$.

The most stringent condition is assumption (4), which requires that $\bE[(y-w^{\top}x)x_i] = 0$ for 
all $i \not\in S$ and $w \in \sH$. A sufficient condition is that $\bE[yx\sm{[\neg S]}] = 0$ 
and $\bE[x\sm{[S]}x\sm{[\neg S]}^{\top}] = 0$, i.e., the noise coordinates are mean-zero and 
uncorrelated with both $x\sm{[S]}$ and $y$. Assumption 4 can, however, in general be relaxed.
For example, in the case of linear regression, we can replace it with an irrepresentability condition
(Section \ref{sec:irrep_front}).

\subsection{Main Results}

We presents two results that control the two quantities of interest:
(i) the regret (\ref{eq:regret}) with respect to the population loss minimizer
$w^*$, which evaluates prediction; and (ii) the parameter error $\|\hat w_T - w^*\|_2^2$.

The first result controls $\Regret(w^*)$ for Algorithm \ref{alg:stream}; 
the bulk of the proof involves 
showing that our $\LI$ sparsification step succeeds at keeping the noise
coordinates at zero without incurring too much extra loss.

\begin{theorem}[online prediction error with uncorrelated noise]
\label{thm:main-risk}
Suppose that the sequence $f_1, \, \ldots, \, f_T$ satisfies assumptions (1-4) from Section \ref{sec:setup} and that we use Algorithm \ref{alg:stream} with
$$\lambda = \frac{3B}{2}\sqrt{\log\p{\frac{6d\log_2(2T)}{\delta}}}, $$
$\eta = \alpha/2$,
and $\epsilon = 0$. Then, for any $\delta > 0$, with probability 
$1-\delta$, we have
\begin{align}
\label{eq:main-risk}
\Regret(w^*)
 = \oo{\frac{kB^2}{\alpha}\log\p{\frac{d\log(T)}{\delta}}\log(T)}.
\end{align}
\end{theorem}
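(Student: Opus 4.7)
The plan is to combine a standard FTRL (follow-the-regularized-leader) regret decomposition with a martingale-based support recovery argument, so that the analysis reduces to online strongly convex optimization on the $k$-dimensional subspace indexed by $S$. To set up, I would first observe that Algorithm~\ref{alg:stream} is equivalent to the sparsified dual-averaging update \eqref{eq:sparse_mirror_theta}--\eqref{eq:sparse_mirror_w}, which is an FTRL scheme with time-varying regularizer containing the quadratic part $\frac{\eta(t-1)}{2}\|w\|_2^2$ and the sparsity-inducing part $\lambda\sqrt{t+1}\|w\|_1$. A textbook FTRL analysis yields a regret decomposition whose principal ingredients are (i) a boundary term $R_T(w^*)$, (ii) a per-round stability term controllable via the strong convexity of the quadratic regularizer, and (iii) a drift term from the regularizer changing between rounds. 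Bounding (i) naively would cost $\lambda\sqrt{T+1}\|w^*\|_1 = \Omega(\sqrt{T})$ regret; the point of the analysis is therefore to show that the $L_1$ penalty never ``fires'' at the wrong coordinates, so that the boundary and drift contributions from the $L_1$ piece effectively vanish.

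The technical heart of the proof is to show that, with probability at least $1-\delta$, $\supp(w_t) \subseteq S$ for every $t \le T+1$, together with $\|w_t\|_1 \le R$ so that $w_t \in \sH$. I would argue this by strong induction on $t$: under the hypothesis $w_1,\dots,w_t \in \sH$, Assumption~4 gives $\mathbb{E}[\nabla f_s(w_s)_i \mid f_1,\dots,f_{s-1}] = \nabla \Lstar(w_s)_i = 0$ for each noise coordinate $i \notin S$, so $\sum_{s \le t} \nabla f_s(w_s)_i$ is a martingale with increments bounded by $B$ (Assumption~3). Azuma's inequality gives control at each fixed $t$, and a peeling argument over $O(\log_2 T)$ dyadic blocks in $t$, union-bounded over the $d-k$ noise coordinates, yields
\begin{equation*}
\Pr\!\left[\max_{i \notin S,\, t \le T}\ \frac{|(\theta_{t+1})_i|}{\sqrt{t+2}} \ge \lambda\right] \le \delta.
\end{equation*}
On the complementary event the threshold $\lambda_{t+1} = \lambda\sqrt{t+2}$ dominates $|(\theta_{t+1})_i|$ and the sparsification step zeros out each noise coordinate, closing the support part of the induction; the $\log_2(2T)$ factor prescribed for $\lambda$ is exactly the peeling cost while $\log(d/\delta)$ is the union-and-deviation cost. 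The $\|w_{t+1}\|_1 \le R$ portion of the induction follows from a parallel use of the quadratic regularizer combined with the implicit shrinkage from the $\frac{1}{\eta t}$ factor in the sparsification step.

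Conditioning on this support-recovery event, the trajectory lives in the $k$-dimensional subspace $\sH$ where $\Lstar$ is $\alpha$-strongly convex and the effective squared gradient norm is at most $kB^2$ rather than $dB^2$. A classical strongly-convex FTRL argument \citep{hazan2007logarithmic} then converts the per-round stability terms into a total regret of $O((kB^2/\alpha)\log T)$; the $L_1$ boundary and drift contributions vanish on the good event because $w^*$ and every $w_t$ share the same support, and the residual effect of the time-varying quadratic piece is absorbed into the strong-convexity analysis. Multiplying the failure probability from step two through yields the desired $O((kB^2/\alpha)\log(d\log(T)/\delta)\log T)$ bound. The main obstacle is the support recovery step: the martingale property of the noise gradients is conditional on the iterates lying in $\sH$, so the concentration and the support recovery must be bootstrapped together through a single induction that also maintains $\|w_t\|_1 \le R$. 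The peeling over dyadic scales of $t$ is what introduces the extra $\log\log T$ factor inside the logarithm, consistent with the law-of-the-iterated-logarithm footnote in the introduction.
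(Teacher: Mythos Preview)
Your outline correctly identifies the support-recovery step (martingale + peeling over dyadic scales + union bound over noise coordinates), and this part matches the paper's Lemma~\ref{lemm:pure_noise}. However, there are two genuine gaps in what follows.

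\textbf{The $L_1$ contributions do not vanish.} You claim that ``the $L_1$ boundary and drift contributions vanish on the good event because $w^*$ and every $w_t$ share the same support.'' This is false. Sharing support says nothing about the \emph{magnitudes}: $\|w_t\|_1 \neq \|w^*\|_1$ in general, so the cost-of-sparsity term
\[
\Lambda \;=\; \sum_{t=1}^T (\lambda_{t-1}-\lambda_t)\bigl(\|w_t\|_1 - \|w^*\|_1\bigr)
\]
does not telescope away. A naive bound gives $\Lambda \le \lambda_T\|w^*\|_1 = \Theta\bigl(R\sqrt{T\log d}\bigr)$, which would destroy the $\log T$ rate. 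The paper's actual argument (Section~\ref{sec:statistical}, Lemma~\ref{lemm:l1_cost} and Theorem~\ref{theo:l1_recipe}) is to bound $\Lambda$ by Cauchy--Schwarz in terms of $\sqrt{\sum_t \|w_t - w^*\|_2^2}$, relate this via strong convexity back to $\sqrt{\Regret(w^*)}$, and then solve the resulting self-referential inequality $\Regret \lesssim R_T + \lambda\sqrt{k\log(T)\cdot\Regret/\alpha}$ for $\Regret$. This self-bounding step is the technical heart of the proof; your proposal skips it entirely.

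\textbf{Strong convexity is only in expectation.} You invoke ``a classical strongly-convex FTRL argument \citep{hazan2007logarithmic}'' on the $k$-dimensional subspace, but that result requires each individual $f_t$ to be $\alpha$-strongly convex. Here only $\Lstar = \mathbb{E}[f_t]$ is; for linear regression, $f_t(w) = \tfrac12(y_t - w^\top x_t)^2$ is rank-one and not strongly convex. Concretely, the quadratic piece of your regularizer produces an extra term $\frac{\eta}{2}\sum_t \|w_t - w^*\|_2^2$ in the FTRL bound that must be canceled; without per-round strong convexity there is nothing to cancel it with. The paper handles this in Section~\ref{sec:expected_cvx} (Lemma~\ref{lemm:expected_cvx}) by a separate supermartingale argument showing that, with high probability, expected strong convexity suffices to absorb this term at the price of halving the effective curvature and adding an $O(kB^2\log(1/\delta)/\alpha)$ additive constant. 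Your proposal does not account for this step.
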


The second result controls $\Norm{\hat{w}_T - w^*}_2^2$, where $\hat{w}_T$ is the weighted average 
given in Algorithm \ref{alg:batch}.
To transform Theorem \ref{thm:main-risk} into a parameter error bound, we use a standard
technique: online-to-batch conversion \citep{cesa2004generalization}.
As we will discuss in Section \ref{sec:batch}, a naive application of
online-to-batch conversion to Algorithm \ref{alg:stream} yields a result that
is loose by a factor of $\log (T)$. Thus, in order to bound batch error we need
to modify the algorithm, resulting in Algorithm \ref{alg:batch} and the
following bound:

\begin{theorem}[parameter error with uncorrelated noise]
\label{thm:main-param}
Suppose that we make the same assumptions and parameter 
choices as in Theorem~\ref{thm:main-risk}, except that we now set 
$$\lambda = \frac{3B}{2}\sqrt{\log\p{\frac{6d\log_2(2T^3)}{\delta}}}. $$
Let $\hat{w}_T$ be the output of Algorithm \ref{alg:batch}.
Then, with probability $1-\delta$, 
we have $\supp\p{\hat{w}_T} \subseteq S$ and 
\begin{align}
\label{eq:main-param}
\|\hat{w}_T-w^*\|_2^2
 &= \oo{\frac{kB^2}{\alpha^2T}\log\p{\frac{d\log(T)}{\delta}}}
\end{align}
for any $\delta > 0$.
\end{theorem}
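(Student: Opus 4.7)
The strategy is to apply online-to-batch conversion to Algorithm \ref{alg:batch}, whose time-weighted averaging scheme is designed precisely to eliminate the extra $\log T$ factor that uniform averaging would incur.

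First, I would observe that Algorithm \ref{alg:batch} is the mirror descent underlying Algorithm \ref{alg:stream} applied to the time-scaled losses $t\,f_t$, with the soft-thresholding level $\lambda_t = t^{3/2}\lambda$ and proximal weight $\eta\sum_{s<t}s$ chosen to match. This lets me reuse the proof structure of Theorem \ref{thm:main-risk}, rerunning the martingale tail bounds on the weighted partial sums $\sum_{s\leq t} s\,\nabla f_s(w_s)\sm{[\neg S]}$. Since the standard deviation of such a partial sum scales like $\sqrt{\sum_s s^2} = \Theta(t^{3/2})$, the soft-threshold must be inflated to $t^{3/2}\lambda$ and the log-argument must become $\log(T^3)$ to support a union bound over all $t \leq T$. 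Because the scaled losses $t f_t$ correspond to running a strongly convex online algorithm whose strong-convexity parameter on $S$ grows like $t\alpha$, the analysis from Sections \ref{sec:sparse}--\ref{sec:expected_cvx} yields a weighted regret of order $T$ rather than $T\log T$:
\[\sum_{t=1}^T t\bigl(f_t(w_t) - f_t(w^*)\bigr) = \oo{\frac{kB^2 T}{\alpha}\log\p{\frac{d\log T}{\delta}}},\]
together with $\supp(w_t) \subseteq S$ for every $t \leq T$.

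Next, I would perform the online-to-batch conversion. Since $\hat w_T = \frac{2}{T(T+1)}\sum_{t=1}^T t\,w_t$ is a convex combination of vectors supported in $S$, convexity of $\Lstar$ gives
\[\Lstar(\hat w_T) - \Lstar(w^*) \;\leq\; \frac{2}{T(T+1)}\sum_{t=1}^T t\bigl(\Lstar(w_t) - \Lstar(w^*)\bigr).\]
To replace the right-hand side by its empirical (weighted-regret) analogue, I would apply a Bernstein-style tail bound to the martingale differences $t\bigl[(f_t-\Lstar)(w_t) - (f_t-\Lstar)(w^*)\bigr]$; these increments are bounded via assumption~(3) together with the $\ell_1$ bound on $w_t$ implied by the sparsification step, and the resulting concentration error is dominated by the weighted regret above. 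Combining these pieces yields $\Lstar(\hat w_T) - \Lstar(w^*) = \oo{(kB^2)/(\alpha T)\cdot\log(d\log T/\delta)}$. Since $\supp(\hat w_T) \subseteq S$, assumption~(2) gives the lower bound $\Lstar(\hat w_T) - \Lstar(w^*) \geq \tfrac{\alpha}{2}\|\hat w_T - w^*\|_2^2$, and dividing produces the advertised rate.

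The main obstacle I anticipate is the first step: verifying that the noise-suppression machinery behind Theorem \ref{thm:main-risk} (in particular the tail bounds in Section \ref{sec:statistical}) carries through cleanly to $t$-weighted updates. One must show that $\lambda_t = t^{3/2}\lambda$ with the enlarged log-argument $\log(T^3)$ is simultaneously large enough to keep every weighted noise-coordinate partial sum below threshold uniformly over $t \leq T$, yet small enough not to over-shrink the signal coordinates. Once this weighted support-control and weighted-regret bound are in place, the online-to-batch step and the final strong-convexity-to-parameter-error transfer are essentially routine.
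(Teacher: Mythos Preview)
Your proposal is correct and follows essentially the same route as the paper: it too interprets Algorithm~\ref{alg:batch} as adaptive mirror descent on the time-scaled losses $t f_t$ with threshold $\lambda_t = \lambda t^{3/2}$, reruns the support-recovery and cost-of-sparsity arguments to obtain a weighted regret $\sum_t t(f_t(w_t)-f_t(w^*)) = \oo{kB^2 T/\alpha \cdot \log(d\log T/\delta)}$, applies a Bernstein-type martingale bound for online-to-batch conversion of the weighted average, and finishes via strong convexity of $\Lstar$ on $S$. The obstacle you flag---that the noise-coordinate partial sums now scale like $t^{3/2}$, forcing the $T^3$ inside the logarithm---is exactly the adjustment the paper makes, and once that is in place the rest is, as you say, routine.
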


\subsection{Irrepresentability and Support Recovery}
\label{sec:irrep_front}

In practice, Assumption 4 from Section \ref{sec:setup} is unreasonably strong:
in the context of high-dimensional regression, we cannot in general hope for the noise features to
be exactly orthogonal to the signal ones. Here, we discuss how this condition can be relaxed in the
context of online linear regression.

In the batch setting, there is a large literature on establishing
conditions on the design matrix $X \in \RR^{n \times d}$ under which the lasso performs well
\citep[e.g.,][]{meinshausen2009lasso,raskutti2010restricted,van2008high,van2009conditions,zhao2006model}.
The two main types of assumptions typically made on the design $X$ are as follows:
\begin{itemize}
\item The {\bf restricted eigenvalue condition} \citep{bickel2009simultaneous,raskutti2010restricted}
is sufficient for obtaining low $\LII$ prediction error under sparsity assumptions on $w^*$.
A similar condition is also necessary in the minimax setting \citep{raskutti2011minimax}.
\item The stronger {\bf irrepresentability condition} \citep{meinshausen2006high,zhao2006model} is
sufficient and essentially necessary for recovering the support of $w^*$.
\end{itemize}
We will show that our Algorithm \ref{alg:batch}
still converges at the rate \eqref{eq:main-param} under a slight strengthening of the
standard irrepresentability condition, given below: 

\setcounter{assumption}{4}
\begin{assumption}[irrepresentable noise features]
\label{assu:irrep_noise}
The noise features are irrepresentable 
using the signal features in the sense that, 
for any $\tau \in \bR^d$ with $\supp(\tau) \subseteq S$ and any $j \notin S$,
\begin{equation}
\label{eq:irrep_noise}
\Abs{\Cov{x_t^j, \, \tau \cdot x_t}} \leq \rho \frac{\alpha}{\sqrt{k}} \Norm{\tau}_2
\end{equation}
for some constant $0 \leq \rho < 1/\sqrt{24}$.
Recall that $\alpha$ is the strong convexity parameter of the expected loss,
and $|S| = k$.
\end{assumption}

The fact that our algorithm requires an irrepresentability condition instead of the
weaker restricted eigenvalue condition stems from the fact that our algorithm effectively
achieves low prediction error via support recovery; see, e.g., 
Lemma~\ref{lem:irrep_lambda}. Thus, we need conditions on the design $X$ that are
strong enough to guarantee support recovery. For an overview of how different assumptions
on the design relate to each other, see \citet{van2009conditions}.

Given Assumption \ref{assu:irrep_noise}, we have the following bound on
the performance of Algorithm \ref{alg:batch}. We show how Theorem \ref{thm:main-param}
can be adapted to yield this result in Section \ref{sec:irrep}.

\begin{theorem}[parameter error with irrepresentability]
\label{theo:irrep_main}
Under the conditions of Theorem \ref{thm:main-param}, suppose that we replace
Assumption 4 from Section \ref{sec:setup} with the irrepresentability
Assumption \ref{assu:irrep_noise} above. Then, for any $\delta > 0$, for an
appropriate setting of $\lambda$ we have 
\begin{align}
\label{eq:irrep_main}
\|\hat{w}_T-w^*\|_2^2
 &= \oo{\frac{1}{1 - 24\rho^2} \, \frac{kB^2}{\alpha^2T}\log\p{\frac{d\log(T)}{\delta}}}
\end{align}
with probability $1 - \delta$.
\end{theorem}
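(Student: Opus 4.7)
The strategy is to revisit the proof of Theorem~\ref{thm:main-param}, locate the single place where Assumption~4 is invoked, and show that Assumption~\ref{assu:irrep_noise} can be substituted there at the cost of the $1/(1-24\rho^2)$ factor.

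The role of Assumption~4 in Theorem~\ref{thm:main-param} is to guarantee support recovery of the iterates $w_t$: since $\nabla\Lstar(w)_j=0$ for $j\notin S$ whenever $\supp(w)\subseteq S$, the $j$-th coordinate of the running sum $\theta_t$ (and of its weighted analogue in Algorithm~\ref{alg:batch}) is a pure martingale, which concentrates below $\lambda_t$, so that the soft-thresholding step zeros it out. Under Assumption~\ref{assu:irrep_noise}, for linear regression with $\supp(w)\subseteq S$ we have $\nabla\Lstar(w)_j = \Cov{x^j, (w-w^*)^\top x}$, which is at most $\rho(\alpha/\sqrt{k})\|w-w^*\|_2$ in absolute value by \eqref{eq:irrep_noise}. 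Hence $\theta_{t,j}$ decomposes as a martingale plus a drift of magnitude at most $\rho(\alpha/\sqrt{k})\sum_{s<t}\|w_s-w^*\|_2$ (with the appropriate reweighting for Algorithm~\ref{alg:batch}).

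The main obstacle is the circularity: controlling the drift requires an a priori bound on $\sum_s \|w_s-w^*\|_2$, but that bound is itself downstream of support recovery. I would resolve this by a joint induction on $t$ over the event $A_t=\{\supp(w_s)\subseteq S\text{ for all }s\le t\}$. On $A_{t-1}$, strong convexity in expectation combined with the mirror-descent regret machinery developed in Sections~\ref{sec:sparse} and~\ref{sec:expected_cvx} yields a running bound on $\sum_{s\le t}\|w_s-w^*\|_2^2$; Cauchy--Schwarz converts this into a bound on $\sum_{s\le t}\|w_s-w^*\|_2$. Plugging that estimate into the drift bound, and inflating $\lambda_t$ just enough to cover both the martingale fluctuation (exactly as in Theorem~\ref{thm:main-param}) and the worst-case drift, the induction closes and $A_t$ holds with the claimed probability.

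The constant $1/(1-24\rho^2)$ appears when one collects terms in the final strong-convexity inequality: the drift contribution ends up adding a quantity of the form $c\rho^2\sum_s\|w_s-w^*\|_2^2$ to the noise side, and rearranging to isolate $\sum_s\|w_s-w^*\|_2^2$ on the left gives the prefactor $1/(1-24\rho^2)$, which is finite precisely under the assumption $\rho^2<1/24$. Once support recovery is established, the online-to-batch conversion on $\hat w_T$ from Section~\ref{sec:batch} yields \eqref{eq:irrep_main} exactly as in Theorem~\ref{thm:main-param}, now carrying this extra factor. The hardest step is setting up the joint induction so that the self-referential bound on $\sum_s\|w_s-w^*\|_2$ closes cleanly; the remaining algebra for choosing the inflated $\lambda$ is routine but requires care to make the numerical constants consistent with the stated threshold $\rho<1/\sqrt{24}$.
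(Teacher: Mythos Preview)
Your proposal is correct and follows the paper's own argument (Lemma~\ref{lem:irrep_lambda} and Theorem~\ref{theo:main_irrep}): induct on $t$ over the event $\{\supp(w_s)\subseteq S,\ s\le t\}$, split $\theta_{t+1,j}$ into two mean-zero martingale pieces plus a drift $\sum_s s\,\nabla\Lstar(w_s)_j$ controlled by Assumption~\ref{assu:irrep_noise} and Cauchy--Schwarz, and feed in the running bound on $\sum_s s\|w_s-w^*\|_2^2$ available on that event. One small correction on the bookkeeping: in the paper the self-referential inequality that produces $1/(1-24\rho^2)$ lives in $\lambda$, not in $\sum_s\|w_s-w^*\|_2^2$---the drift is $\lesssim \rho\sqrt{\sum_s s\|w_s-w^*\|_2^2}$, that sum is bounded on the support-recovery event by a quantity linear in $\lambda^2$ (through the cost-of-sparsity term $\Lambda$), so closing the induction forces $\lambda^2 \ge C B^2\log(\cdot) + 24\rho^2\lambda^2$, and the inflated $\lambda^2\asymp B^2\log(\cdot)/(1-24\rho^2)$ then propagates via $\Lambda$ into the final parameter-error bound.
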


A form of the standard irrepresentability condition for the batch lasso that only depends
on the design $X$ is given by \citep{van2009conditions}:
\begin{equation}
\label{eq:irrep_batch}
\max_{\tau \in \{-1, \, +1\}^k} \Norm{\Sigma_{\neg S, \, S} \Sigma_{S, \, S}^{-1} \, \tau}_\infty < 1,
\end{equation}
where $\Sigma = \Var{X}$, $\Sigma_{S, \, S}$ is the variance of the signal coordinates of $X$, 
and $\Sigma_{\neg S, \, S}$ is the covariance between the non-signal and signal coordinates.
The conditions \eqref{eq:irrep_noise} and \eqref{eq:irrep_batch} are within a constant factor of each other if none
of the entries of $\Sigma_{\neg S, \, S}$ are much bigger than the others; for example, in the
equicorrelated case, they both require the cross-term correlations to be on the order of $1/\sqrt{k}$.
On the other hand, \eqref{eq:irrep_batch} allows $\Sigma_{\neg S, \, S}$ to have a small number of larger
entries in each row, whereas \eqref{eq:irrep_noise} does not. It seems plausible to us that an
analogue to Theorem \ref{theo:irrep_main} should still hold under a weaker condition that
more closely resembles \eqref{eq:irrep_batch}.

\subsection{Proof Outline and Intuition}
\label{sec:outline}

Our analysis starts with results from online convex optimization that study
a broad class of \emph{adaptive mirror descent updates}, which have the following general form:
\begin{equation}
\label{eq:mirror}
w_t = \argmin_{w} \left\{\psi_t(w) + w^{\top} \theta_t \right\}, \; \where \; \theta_t = \sum_{s=1}^{t-1} \nabla f_s(w_s)
\end{equation}
and $\psi_t$ is a convex regularizer.
Note that our method from Algorithm \ref{alg:stream} is an instance of adaptive mirror descent with
the regularizer
\begin{equation}
\label{eq:psi1}
\psi_t\p{w} =  \frac{\epsilon}{2}\Norm{w}_2^2 + \frac{\eta}{2} \sum_{s = 1}^{t - 1} \Norm{w - w_s}_2^2 +  \lambda \sqrt{t+1} \Norm{w}_1.
\end{equation}
The following result by \citet{orabona2013general} applies to all procedures of the form \eqref{eq:mirror}:

\begin{proposition}[adaptive mirror descent \cite{orabona2013general}]
\label{prop:adaptive}
Let $f_t(\cdot)$ be a sequence of loss functions, let $\psi_t(\cdot)$ be a sequence 
of convex regularizers, and let $w_t$ be
defined as in \eqref{eq:mirror}. Then, for any $u \in \RR^d$,
\begin{align}
\label{eq:adaptive}
\sum_{t=1}^T &\p{w_t-u}^{\top}\nabla f_t(w_t)
 \\ \notag
&\leq \psi_{T}(u) + \sum_{t=1}^T D_{\psi^*_t}\p{\theta_{t + 1} || \theta_{t}} + \sum_{t=1}^T [\psi_{t-1}(w_t) - \psi_t(w_t)].
\end{align}
Here, we let $\psi_0(\cdot) \equiv 0$ by convention and use 
$D_{\psi^*_t}$ to denote the \emph{Bregman divergence}:
\begin{equation}
\label{eq:bregman}
 D_{\psi^*_t}\p{\theta_{t + 1} || \theta_{t}} = \psi^*_t\p{\theta_{t + 1}} - \psi^*_{t}\p{\theta_t} - \left\langle \nabla \psi^*_t \p{\theta_{t}}, \, \theta_{t + 1} - \theta_t \right\rangle.
\end{equation}
\end{proposition}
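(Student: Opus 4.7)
The plan is to combine the Fenchel--Young inequality with a telescoping argument applied to the conjugate potentials $\psi_t^*$. First, since $w_t = \argmin_w \{\psi_t(w) + w^\top \theta_t\}$, Fenchel duality simultaneously yields the value identity $\psi_t(w_t) + w_t^\top \theta_t = -\psi_t^*(-\theta_t)$ and the gradient identity $w_t = \nabla \psi_t^*(-\theta_t)$. Using $\nabla f_t(w_t) = \theta_{t+1}-\theta_t$ together with $\theta_1 = 0$, I would rewrite the left-hand side of \eqref{eq:adaptive} as $\sum_{t=1}^T (w_t - u)^\top (\theta_{t+1}-\theta_t) = \sum_{t=1}^T w_t^\top (\theta_{t+1}-\theta_t) - u^\top \theta_{T+1}$ and deal with each piece separately.

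Next, I would expand each $w_t^\top(\theta_{t+1}-\theta_t)$ using the Bregman identity: since $\nabla \psi_t^*(-\theta_t) = w_t$, rearranging the definition of $D_{\psi_t^*}$ gives $w_t^\top(\theta_{t+1}-\theta_t) = \psi_t^*(-\theta_t) - \psi_t^*(-\theta_{t+1}) + D_{\psi_t^*}(-\theta_{t+1}\,\|\,-\theta_t)$. Summing in $t$ and shifting the index in the second subterm (using the convention $\psi_0 \equiv 0$, so that $\psi_0^*(0) = 0$ absorbs the $t=1$ boundary) converts the first two terms into a ``telescope with drift'' of the form $\sum_{t=1}^T [\psi_t^*(-\theta_t) - \psi_{t-1}^*(-\theta_t)] - \psi_T^*(-\theta_{T+1})$.

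Now I would bound each drift term $\psi_t^*(-\theta_t) - \psi_{t-1}^*(-\theta_t)$ by $\psi_{t-1}(w_t) - \psi_t(w_t)$. The first conjugate equals $-w_t^\top \theta_t - \psi_t(w_t)$ by the value identity, while the second satisfies $\psi_{t-1}^*(-\theta_t) \geq -w_t^\top \theta_t - \psi_{t-1}(w_t)$ by Fenchel--Young evaluated at $w_t$; subtracting gives the claim. For the leftover boundary $-\psi_T^*(-\theta_{T+1})$, I would apply Fenchel--Young at the comparator $u$ itself: $\psi_T^*(-\theta_{T+1}) \geq -u^\top \theta_{T+1} - \psi_T(u)$, so $-\psi_T^*(-\theta_{T+1}) \leq u^\top \theta_{T+1} + \psi_T(u)$, which exactly cancels the isolated $-u^\top \theta_{T+1}$ and leaves only $+\psi_T(u)$. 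Combining the three classes of terms gives \eqref{eq:adaptive}.

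The main obstacle is the sign and index bookkeeping in the telescope, compounded by a sign convention in the Bregman term: the computation natively produces $D_{\psi_t^*}(-\theta_{t+1}\,\|\,-\theta_t)$, which matches the statement's $D_{\psi_t^*}(\theta_{t+1}\,\|\,\theta_t)$ once one adopts the convention (implicit in Orabona's setup) that folds the reflection into the definition of $\psi_t^*$; equivalently, one may replace $\psi_t^*$ throughout by $\tilde\psi_t^*(\theta) := \psi_t^*(-\theta)$ and apply the Bregman identity directly to $\tilde\psi_t^*$. Aside from this bookkeeping, every step is a direct invocation of Fenchel--Young, and the argument is essentially mechanical once the indices line up.
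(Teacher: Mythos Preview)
The paper does not actually provide its own proof of this proposition; it is cited directly from \citet{orabona2013general} and used as a black box. Your proof is correct and is, in fact, the standard Fenchel--duality/telescoping argument underlying this class of bounds (and essentially what Orabona et al.\ do): express $w_t = \nabla\psi_t^*(-\theta_t)$, use the three-point Bregman identity to rewrite $w_t^\top(\theta_{t+1}-\theta_t)$, telescope the $\psi_t^*$ terms across $t$ with the drift $\psi_t^*(-\theta_t)-\psi_{t-1}^*(-\theta_t)$ controlled via Fenchel--Young at $w_t$, and close out the boundary $-\psi_T^*(-\theta_{T+1})$ via Fenchel--Young at $u$. All of your index shifts and the use of $\psi_0^*(0)=0$ are correct.

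Your caveat about the sign of the Bregman argument is also accurate: the computation genuinely yields $D_{\psi_t^*}(-\theta_{t+1}\,\|\,-\theta_t)$, and the statement as written in the paper is using an implicit reflection convention. This is harmless here because every downstream use in the paper (e.g., Corollary~\ref{cor:adaptive} and Lemma~\ref{lemm:sparse_amd}) only invokes the strong-smoothness bound $D_{\psi_t^*}(a\,\|\,b)\le \tfrac{1}{2(\epsilon+\eta t)}\|a-b\|_2^2$, which is invariant under the joint reflection $(a,b)\mapsto(-a,-b)$. So you have correctly identified both the issue and why it does not matter.
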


The bound \eqref{eq:adaptive} is commonly used when the losses $f_t$ are convex,
in which case we have:
\begin{equation}
\label{eq:cvx_motiv}
f_t(w_t) - f_t(u) \leq (w_t-u)^{\top}\nabla f_t(w_t),
\end{equation}
which immediately results in an upper bound on $\Regret(u)$.
We emphasize, however, that \eqref{eq:adaptive} still holds even when $f_t$ is not convex;
we will use this fact to our advantage in Section \ref{sec:expected_cvx}.

Proposition \ref{prop:adaptive} turns out to be very powerful. As shown by \citet{orabona2013general}, 
many classical online learning bounds that were originally proved using ad-hoc methods 
follow directly as corollaries of \eqref{eq:adaptive}. This framework has also led to 
improvements to existing algorithms \citep{steinhardt2014adaptivity}.
Applied in our context, and setting $u = w^*$, we obtain the following bound (see the 
appendix for details):

\begin{corollary}[decomposition]
\label{cor:adaptive}
If we run Algorithm~\ref{alg:stream} on loss functions $f_1, \, \ldots, \, f_T$,
then for any $u \in \sH$ (in particular, $u = w^*$):
\begin{align}
\label{eq:meta}
\sum_{t=1}^T &(w_t-u)^{\top}\nabla f_t(w_t) \le \MainRegret_0 + \CostOfSparsity + \Quadratic, \\
\label{eq:MainRegret0}
\MainRegret_0 &\eqdef \frac{\epsilon}{2}\|u\|_2^2
 + \frac{1}{2}\sum_{t=1}^T \frac{\|\nabla f_t(w_t)\|_2^2}{\epsilon + \eta t}, \\
\label{eq:CostOfSparsity}
\CostOfSparsity &\eqdef \sum_{t=1}^T \p{\lambda_{t-1}-\lambda_t}\p{\|w_t\|_1 - \|u\|_1}, \\
\label{eq:Quadratic}
\Quadratic &\eqdef \frac{\eta}{2}\sum_{t=1}^T \|w_t-u\|_2^2.
\end{align}
\end{corollary}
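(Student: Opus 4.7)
The plan is to invoke Proposition~\ref{prop:adaptive} directly with the regularizer $\psi_t$ from \eqref{eq:psi1}, which is exactly the one that makes Algorithm~\ref{alg:stream} an instance of the adaptive mirror descent template. The bound \eqref{eq:adaptive} then produces three terms on the right-hand side --- $\psi_T(u)$, the Bregman sum $\sum_t D_{\psi_t^*}(\theta_{t+1}\|\theta_t)$, and the telescoping difference $\sum_t[\psi_{t-1}(w_t)-\psi_t(w_t)]$ --- and the goal is to show that, after discarding appropriate nonpositive quantities, they reassemble into $\MainRegret_0+\CostOfSparsity+\Quadratic$.

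I would start by expanding $\psi_T(u)$ into its three constituents: $\tfrac{\epsilon}{2}\|u\|_2^2$ (the leading term of $\MainRegret_0$), $\tfrac{\eta}{2}\sum_{s=1}^{T-1}\|u-w_s\|_2^2$ (which equals $\Quadratic$ minus the nonnegative quantity $\tfrac{\eta}{2}\|w_T-u\|_2^2$, and is thus bounded above by $\Quadratic$), and $\lambda_T\|u\|_1$ (which will combine with the telescoping term below). Next I would compute $\psi_{t-1}(w_t)-\psi_t(w_t)$ directly from \eqref{eq:psi1}: for $t\geq 2$ this equals $-\tfrac{\eta}{2}\|w_t-w_{t-1}\|_2^2-(\lambda_t-\lambda_{t-1})\|w_t\|_1$, and (using $\psi_0\equiv 0$) the $t=1$ entry is $-\psi_1(w_1)=-\tfrac{\epsilon}{2}\|w_1\|_2^2-\lambda_1\|w_1\|_1$. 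All quadratic pieces here are nonpositive and can be dropped; the sparsity pieces sum to $-\sum_{t=1}^T(\lambda_t-\lambda_{t-1})\|w_t\|_1$ under the natural convention $\lambda_0=0$. Combining this with the reserved $\lambda_T\|u\|_1=\sum_t(\lambda_t-\lambda_{t-1})\|u\|_1$ produces exactly $\CostOfSparsity=\sum_t(\lambda_{t-1}-\lambda_t)(\|w_t\|_1-\|u\|_1)$.

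For the Bregman term, the key calculation is that the Tikhonov-and-averaging part of $\psi_t$ has Hessian $(\epsilon+\eta(t-1))I$, so $\psi_t$ is $(\epsilon+\eta(t-1))$-strongly convex in $\|\cdot\|_2$, and by Fenchel duality $\psi_t^*$ is $1/(\epsilon+\eta(t-1))$-smooth; hence $D_{\psi_t^*}(\theta_{t+1}\|\theta_t)\leq \|\theta_{t+1}-\theta_t\|_2^2/(2(\epsilon+\eta(t-1)))=\|\nabla f_t(w_t)\|_2^2/(2(\epsilon+\eta(t-1)))$, which up to a harmless index shift is the gradient sum in $\MainRegret_0$. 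The main obstacle is not any single inequality but the bookkeeping: carefully tracking the Tikhonov/averaging, Bregman, and sparsity pieces through the expansion, discarding only genuinely nonpositive leftovers, and verifying that the sparsity telescoping collapses cleanly into $\CostOfSparsity$ with no stray boundary residue.
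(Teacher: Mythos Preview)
Your approach is essentially the paper's, but with one difference that is not quite ``harmless.'' Using the regularizer \eqref{eq:psi1} directly, $\psi_t$ is only $(\epsilon+\eta(t-1))$-strongly convex, so your Bregman bound has denominator $\epsilon+\eta(t-1)$, not $\epsilon+\eta t$. That does \emph{not} prove the corollary as stated: your gradient sum is term-by-term larger than the one in $\MainRegret_0$ (and the $t=1$ term even blows up when $\epsilon=0$, a case used later in the paper).

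The paper closes this gap by applying Proposition~\ref{prop:adaptive} to the equivalent regularizer
\[
\psi_t(w)=\frac{\epsilon}{2}\|w\|_2^2+\frac{\eta}{2}\sum_{s=1}^{t}\|w-w_s\|_2^2+\lambda_t\|w\|_1,
\]
i.e.\ with the sum running through $s=t$ (equation~\eqref{eq:psi-def}). Because the added summand $\tfrac{\eta}{2}\|w-w_t\|_2^2$ is minimized at $w=w_t$, this does not change the iterates, yet it makes $\psi_t$ exactly $(\epsilon+\eta t)$-strongly convex, so the Bregman term becomes $\|\nabla f_t(w_t)\|_2^2/(2(\epsilon+\eta t))$ on the nose. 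As a bonus, this choice also tidies up your bookkeeping: $\psi_T(u)$ now contains $\tfrac{\eta}{2}\sum_{s=1}^{T}\|u-w_s\|_2^2=Q$ exactly (no missing $s=T$ term to add back), and $\psi_{t-1}(w_t)-\psi_t(w_t)=(\lambda_{t-1}-\lambda_t)\|w_t\|_1$ exactly (the new quadratic summand vanishes at $w_t$), so nothing needs to be dropped. Your telescoping of the $\lambda$-terms into $\CostOfSparsity$ is correct and matches the paper.
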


In words, Corollary~\ref{cor:adaptive} says that the linearized regret is upper
bounded by the sum of three terms:
(i) the main term $\MainRegret_0$ that roughly corresponds to performing
stochastic gradient descent under sparsity from the $L_1$ penalty,
(ii) the cost of ensuring that sparsity $\CostOfSparsity$,
and (iii) a final quadratic term, that will be canceled out by strong convexity
of the loss. 

To achieve our goal from Theorem~\ref{thm:main-risk} of showing that
\begin{align}
\label{eq:main-risk-repeat}
\Regret(w^*) = \oop{k\log(d\log(T))\log(T)},
\end{align}
it remains to control each of the three terms in (\ref{eq:meta}).
The rest of this section provides a high-level overview of 
our argument, indicating where the 
details of the proof appear in the remainder of the paper.

\subsection*{Enforcing Sparsity}

The first problem with \eqref{eq:meta} is that the norms $\|\nabla f_t(w_t)\|_2^2$ in \eqref{eq:MainRegret0} in general
scale with $d$, which is inconsistent with the desired bound \eqref{eq:main-risk-repeat}, which only scales with $\log d$.
In Section \ref{sec:sparse}, we establish a strengthened version of Proposition \ref{prop:adaptive}
that lets us take advantage of effective sparsity of the weight vectors $w_t$ by restricting the
Bregman divergences from \eqref{eq:bregman} to a set of active features. Thanks to our noise assumptions (4) or (5) paired
with an $\LI$ penalty that scales as $\sqrt{t}$, we can show that our active set will have size at most
$k$ with high probability.  This implies that we can replace the term $\MainRegret_0$ in Corollary
\ref{cor:adaptive} with a new term $\MainRegret$ that scales as $\oop{k \log T}$.

\subsection*{Bounding the Cost of Sparsity}

Second, we need to bound the cost of sparsity $\CostOfSparsity$.
A standard analysis following the lines of, e.g., \citet{duchi2010composite} would use 
the inequality 
$\p{\|w_t\|_1 - \|w^*\|_1} \geq -R$, thus resulting in a bound on the cost of $\LI$ penalization
$\CostOfSparsity$ that scales as $R\sqrt{T}$, which again is too large for our purposes.

In a statistical setup, however, we can do better. We know that
$|\lambda_{t-1}-\lambda_t| \approx {\lambda} / (2{\sqrt{t}})$. Meanwhile, given adequate
assumptions, we might also hope for $|\|w_t\|_1-\|w^*\|_1|$ to decay at a rate of
${k}/{\sqrt{t}}$ as well. Combining these two bounds would bound the cost of sparsity
on the order of $\lambda k \log T$. 

The difficulty, of course, is that obtaining bounds of $|\|w_t\|_1-\|w^*\|_1|$ requires controlling
the cost of sparsity, resulting in a seemingly problematic recursion. In Section \ref{sec:statistical},
we develop machinery that lets us simultaneously bound $|\|w_t\|_1-\|w^*\|_1|$ and the cost
of sparsity $\CostOfSparsity$, thus letting us break out of the circular argument. 
The final bound on $\Lambda$ involves a multiplicative constant of $\lambda^2$, where $\lambda$ must be at least 
$\sqrt{\log(d\log(T))}$, which is where the $\log(d\log(T))$ term in our bound comes from.

Finally, we emphasize that our bound on the cost of sparsity crucially depends on $\lambda_t$
growing with $t$ in a way that keeps $\lambda_t - \lambda_{t - 1}$ on a scale
of at most $1/\sqrt{t}$.
Existing methods \citep{duchi2010composite,shalev2011stochastic,xiao2010dual} often just use
a fixed $\LI$ penalty $\lambda_t = \lambda$ for all $t$.
To ensure sparsity, this requires $\lambda$ to be on the order of $\sqrt{T}$, which would 
in turn impose a cost of sparsity of $\sqrt{T}$, rather than the $\log(T)$ cost that we seek.

\subsection*{Working with Strong Convexity in Expectation}
Finally, we need to account for the quadratic term $\Quadratic$ given in \eqref{eq:Quadratic}.
If we knew that $f_t$ were $\alpha$-strongly convex for all $t$, then by definition,
\begin{equation}
\label{eq:ineq-convex}
\sum_{t = 1}^T \p{f_t(w_t) - f_t(w^*)} + \frac{\alpha}{2}\sum_{t=1}^T \|w_t-w^*\|_2^2 \leq \sum_{t = 1}^T (w_t-w^*)^{\top}\nabla f_t(w_t).
\end{equation}
Thus, provided that $\eta \leq \alpha$, we could remove the term \eqref{eq:Quadratic}
when using \eqref{eq:meta} to establish an excess risk bound.

In our application, only the expected loss $\law(w)$ as defined
in Assumption (1) is $\alpha$-strongly convex; the
loss functions $f_t$ themselves are in general not strongly convex. In Section
\ref{sec:expected_cvx}, however, we show that we can still obtain a high-probability
analogue to \eqref{eq:ineq-convex} when $f_t$ is strongly convex in expectation, provided
that $\eta \leq \alpha / 2$.

Putting all these inequalities together, we can successfully 
bound all terms in \eqref{eq:meta} by $\oop{k\log(d\log(T))\log(T)}$.
The last part of our paper then extends these results to provide
bounds for the parameter error of Algorithm \ref{alg:batch} (Section \ref{sec:batch}),
and adapts them to the case of irrepresentable instead of orthogonal features
(Section \ref{sec:irrep}).

\section{Experiments}
\label{sec:experiments}

To test our method, we ran it on several simulated datasets and
a genome-wide association study, while comparing it to several existing methods.
The streaming algorithms we considered were:
\begin{enumerate}
\item Our method, streaming sparse regression (SSR), given in Algorithm \ref{alg:stream},
\item $p$-norm regularized dual averaging ($p$-norm + $\LI$) \citep{shalev2011stochastic}, which exploits sparsity but not strong convexity, and
\item The epoch-based algorithm of \citet*{agarwal2012stochastic} (ANW), which has theoretically optimal asymptotic rates.
\end{enumerate}
We also tried running un-penalized stochastic gradient descent, which exploits strong
convexity but not sparsity; however, this performed badly enough that we did not add it
to our plots.

We also compare all the streaming methods to the batch lasso, which we treat as an oracle.
The goal of the this comparison is to show that, in large-scale problems, streaming algorithms
can be competitive with the lasso. The way we implemented the lasso oracle is by running
\texttt{glmnet} for \textsc{matlab} \citep{friedman2010regularization,qian2013glmnet} with
the largest number of training examples the software could handle before crashing. In both
the simulation and real data experiments, \texttt{glmnet} could not handle all the available data,
so we downsampled the training data to make the problem size manageable; we had to downsample
to $2,500$ out of $10,000$ data points in the simulations and $500$ out of $3,500$ in the genetics example.

\subsection{Simulated Data}
\label{sec:simulations}

We created three different synthetic datasets; for the first two, we ran linear regression 
with a Huberized loss\footnote{Since \texttt{glmnet} does not have an 
option to use the Huberized loss, we used the squared loss instead.}
\[ f_t(w) = h(y_t-w^{\top}x_t), \; h(y) = \left\{ \begin{array}{ccl} {y^2}\big/{2} & : & |y| < C \\ C \cdot (|y|-C/2) & : & |y| \geq C \end{array} \right.. \]
For the third dataset, we used the logistic loss for all methods. 
Our datasets were as follows:
\begin{itemize}
\item \textbf{linear regression, i.i.d. features}: we sampled $x_t \sim \sN(0, I)$ and $y_t = (w^*)^{\top}x_t + v_t$, where 
      $v_t \sim \sN(0, \sigma^2)$, and $w^*$ was a $k$-sparse vector drawn from a Gaussian distribution.
\item \textbf{linear regression, correlated features}: the output relation is the same as before, but now the coordinates of 
      $x_t$ have correlations that decay geometrically with distance 
      (specifically, $\Sigma_{i,j} = 0.8^{|i-j|}$). In addition, the non-zero entries of $w^*$ were 
      fixed to appear consecutively.
\item \textbf{logistic regression}: $x_t$ is a random sign vector and $y_t \in \{0,1\}$, with $p(y_t = 1 \mid x_t) = \tfrac{1}{1+\exp(-(w^*)^{\top}x_t)}$.
\end{itemize}
In each case, we generated data with $d = 100,000$. The first $k = 100$ entries of $w^*$ were drawn from 
independent Gaussian random variables with standard deviation 0.2; the remaining 99,900 entries were 0. 

\begin{table}
\caption{Average runtime (seconds)}
\label{fig:timing}
\begin{tabular}{ccccc}
  &    i.i.d &  correlated &  logit &  gene \\
\hline 
SSR  &    11.3 &  12.1  &  12.2 &  29.2 \\
$p$-norm & 131.5 & 114.3  &  77.7 & 122.0 \\
ANW  &   340.9 & 344.4 &  351.9 & 551.9 
\end{tabular}
\end{table}

\newcommand{\FIGH}{0.3}

\begin{figure}
\begin{center}
\centerline{\begin{tabular}{rcc}
& Prediction Error & Parameter Error \\
{\begin{sideways}\parbox{\FIGH\textheight}{\centering Linear Regression, i.i.d. Features}\end{sideways}}&
\includegraphics[height=\FIGH\textheight]{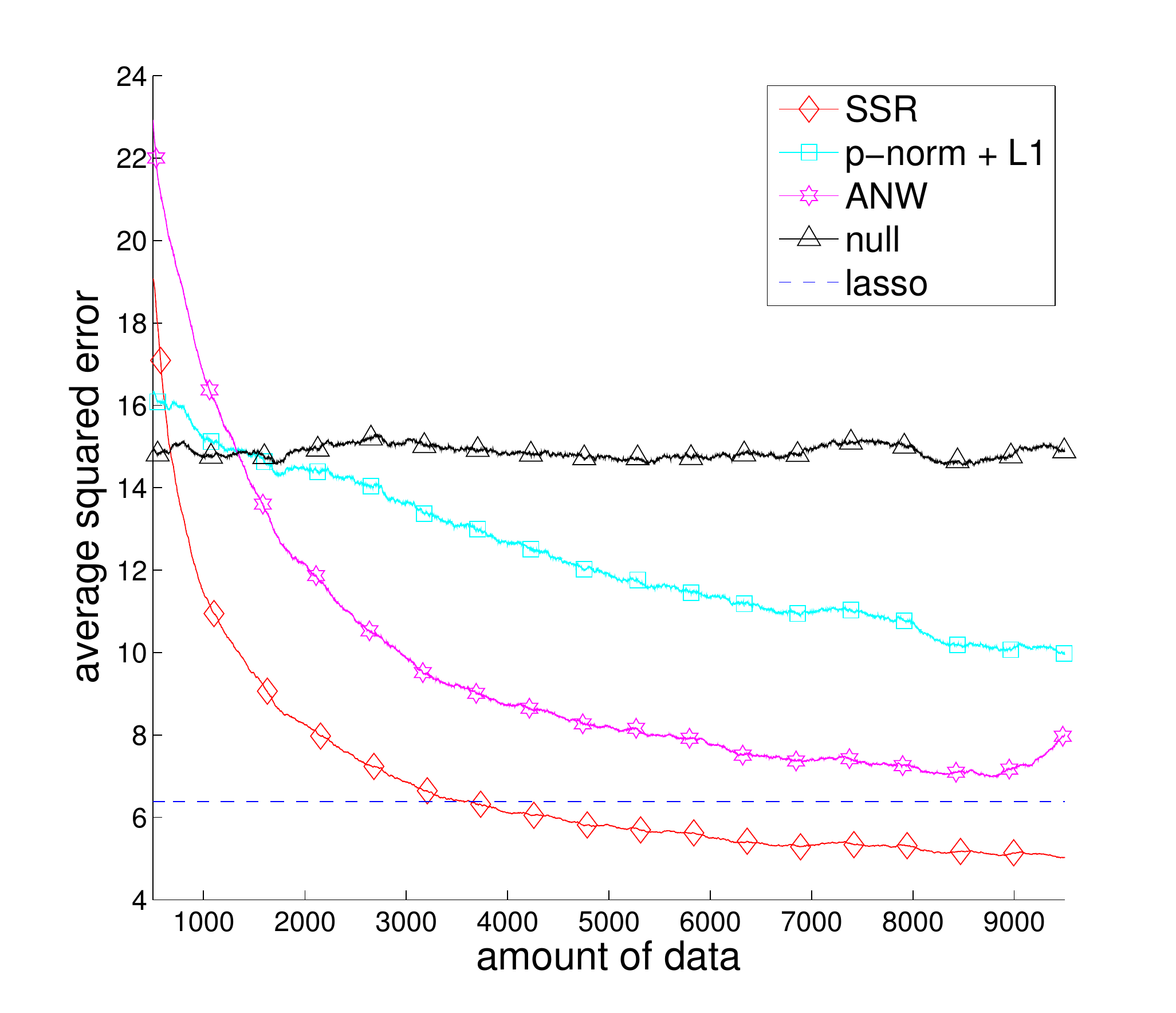} &
\includegraphics[height=\FIGH\textheight]{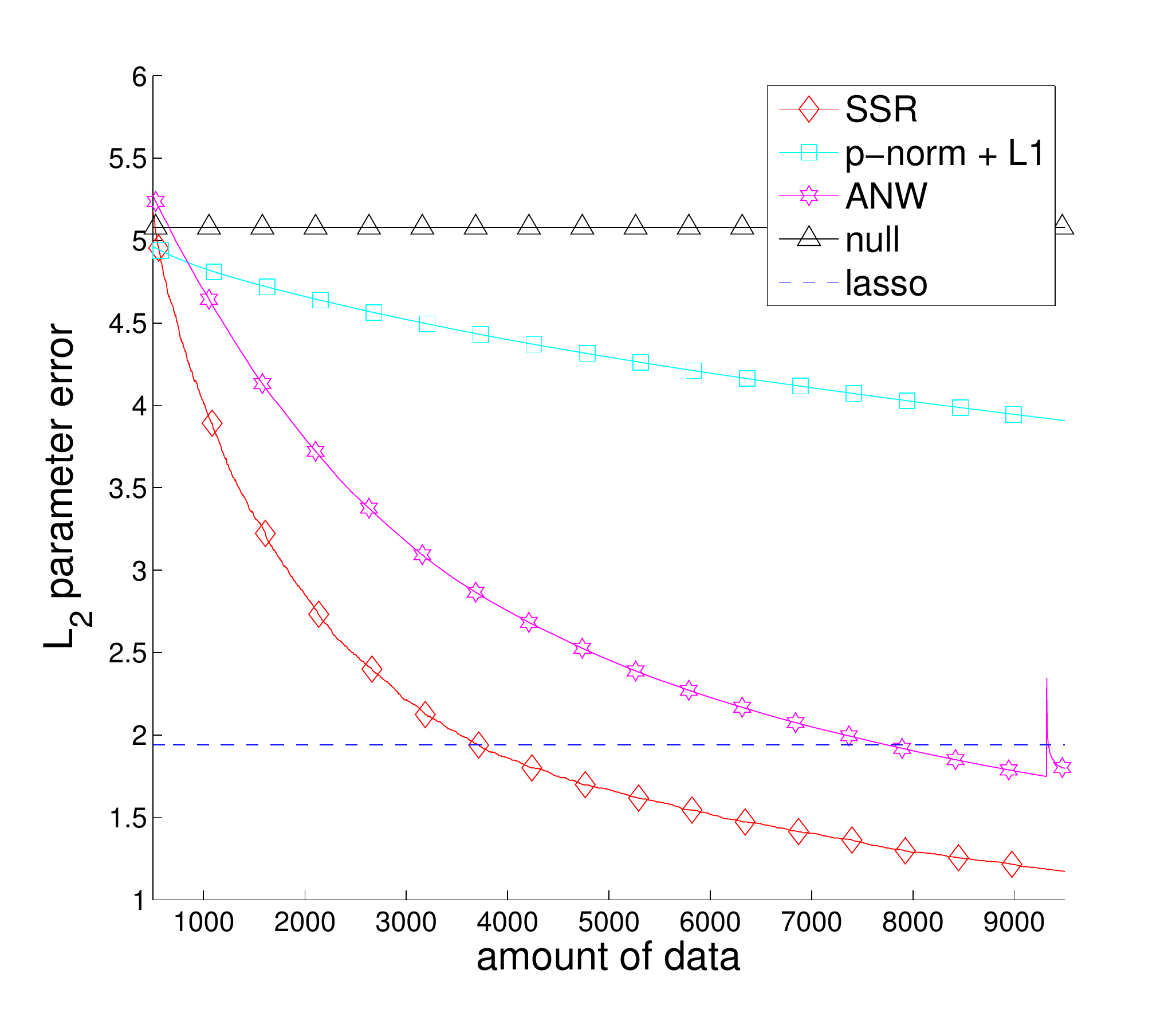} \\
{\begin{sideways}\parbox{\FIGH\textheight}{\centering Linear Reg., Correlated Features}\end{sideways}}&
\includegraphics[height=\FIGH\textheight]{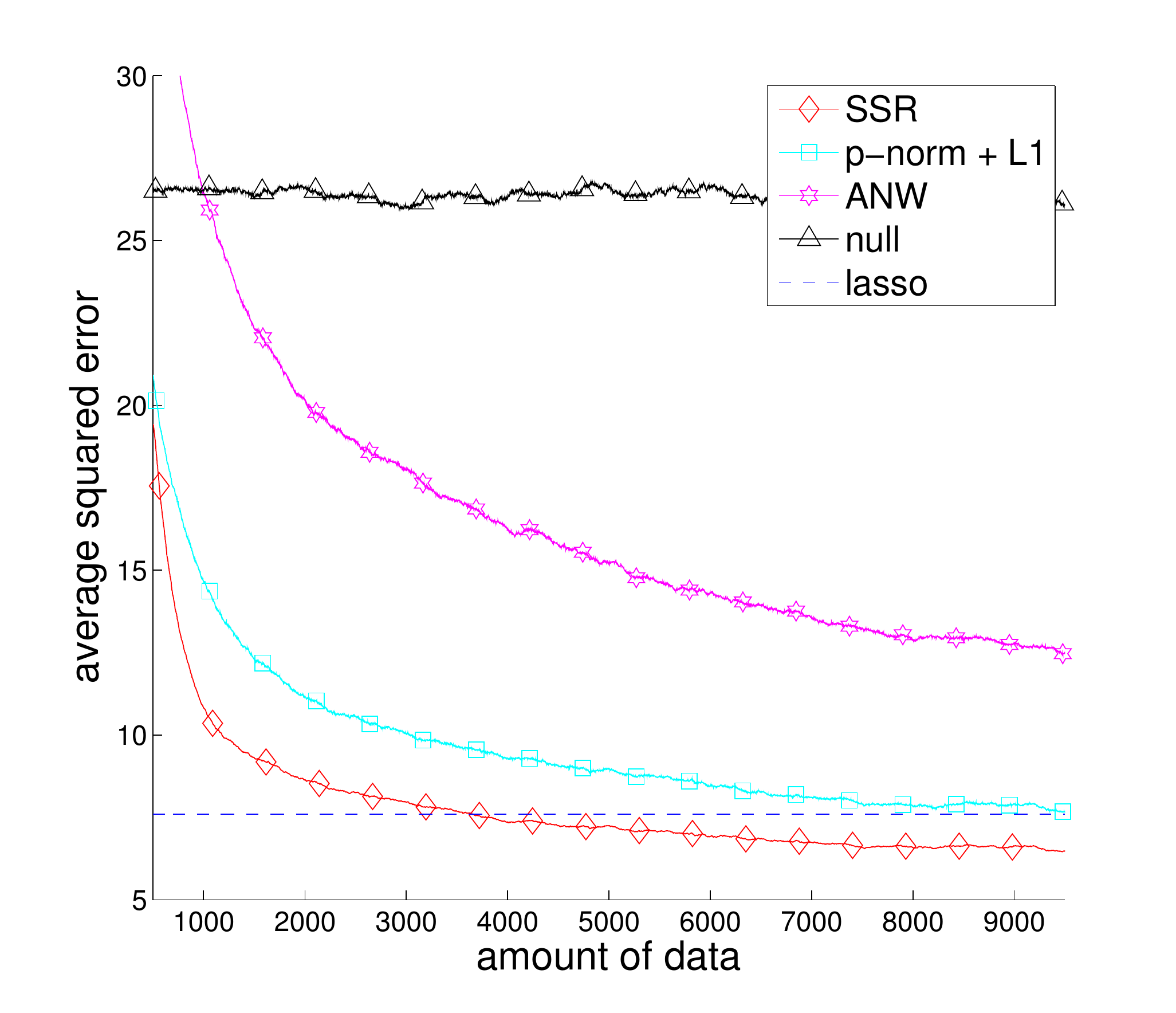} &
\includegraphics[height=\FIGH\textheight]{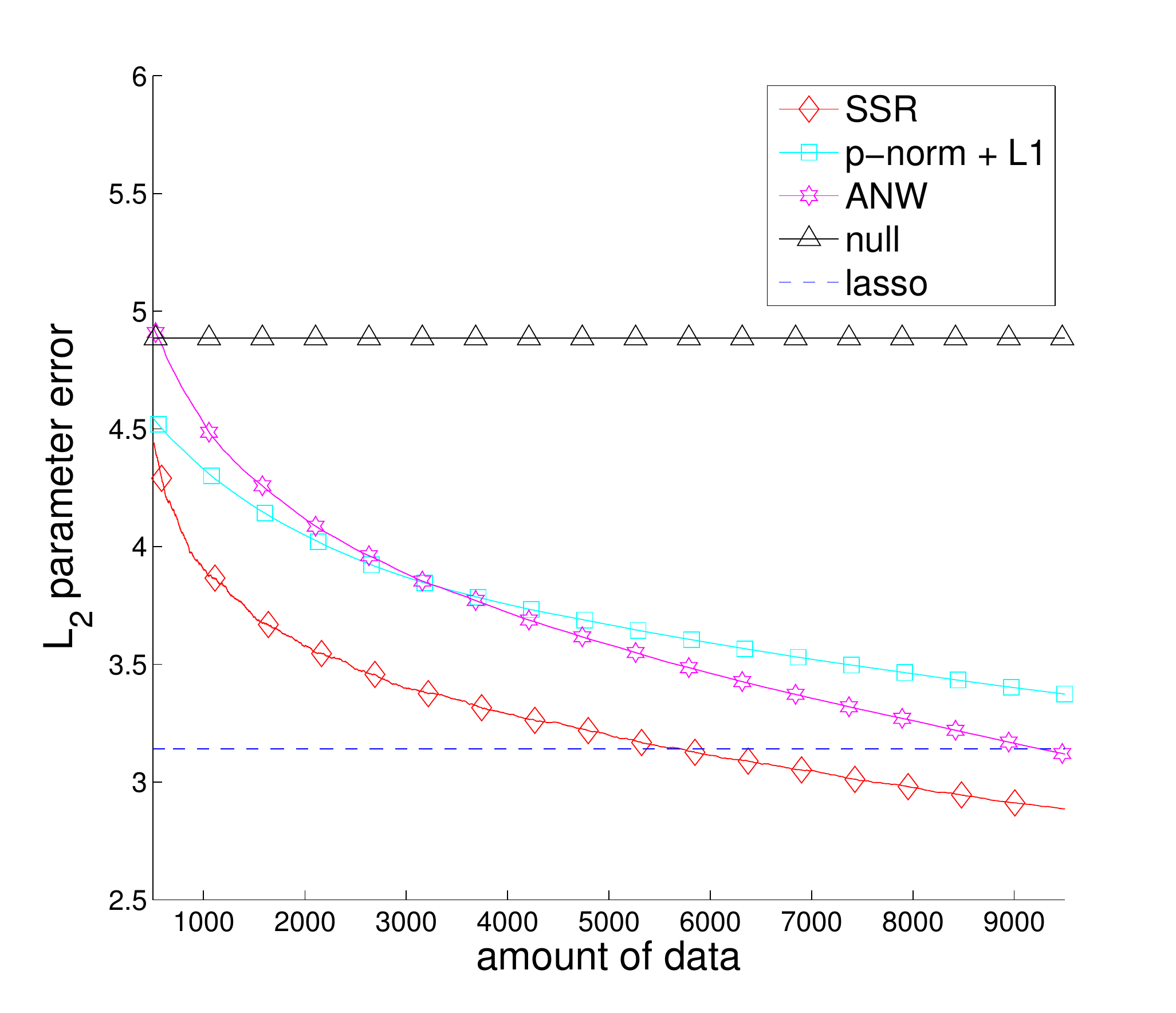} \\
{\begin{sideways}\parbox{\FIGH\textheight}{\centering Logistic Regression}\end{sideways}}&
\includegraphics[height=\FIGH\textheight]{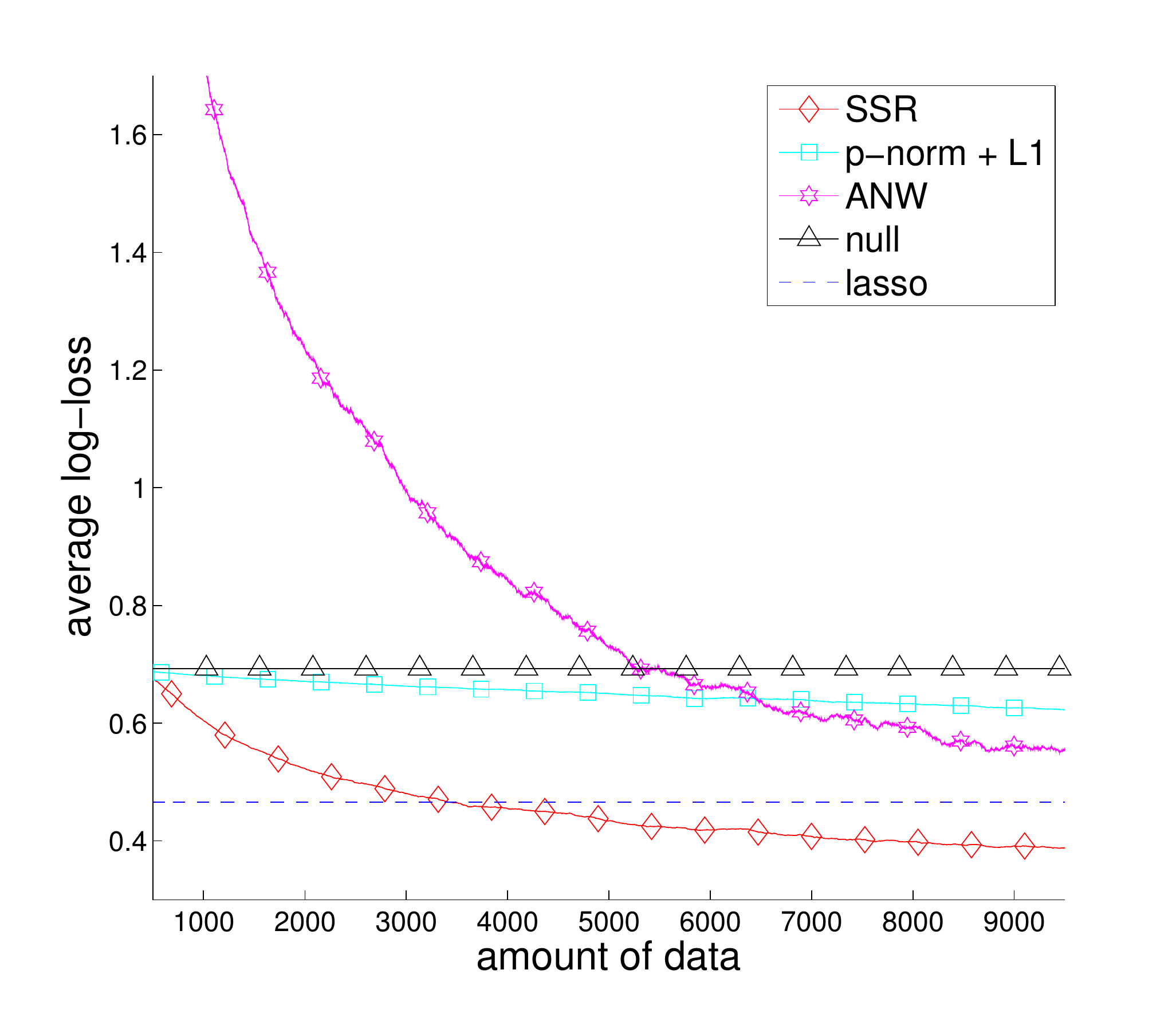} &
\includegraphics[height=\FIGH\textheight]{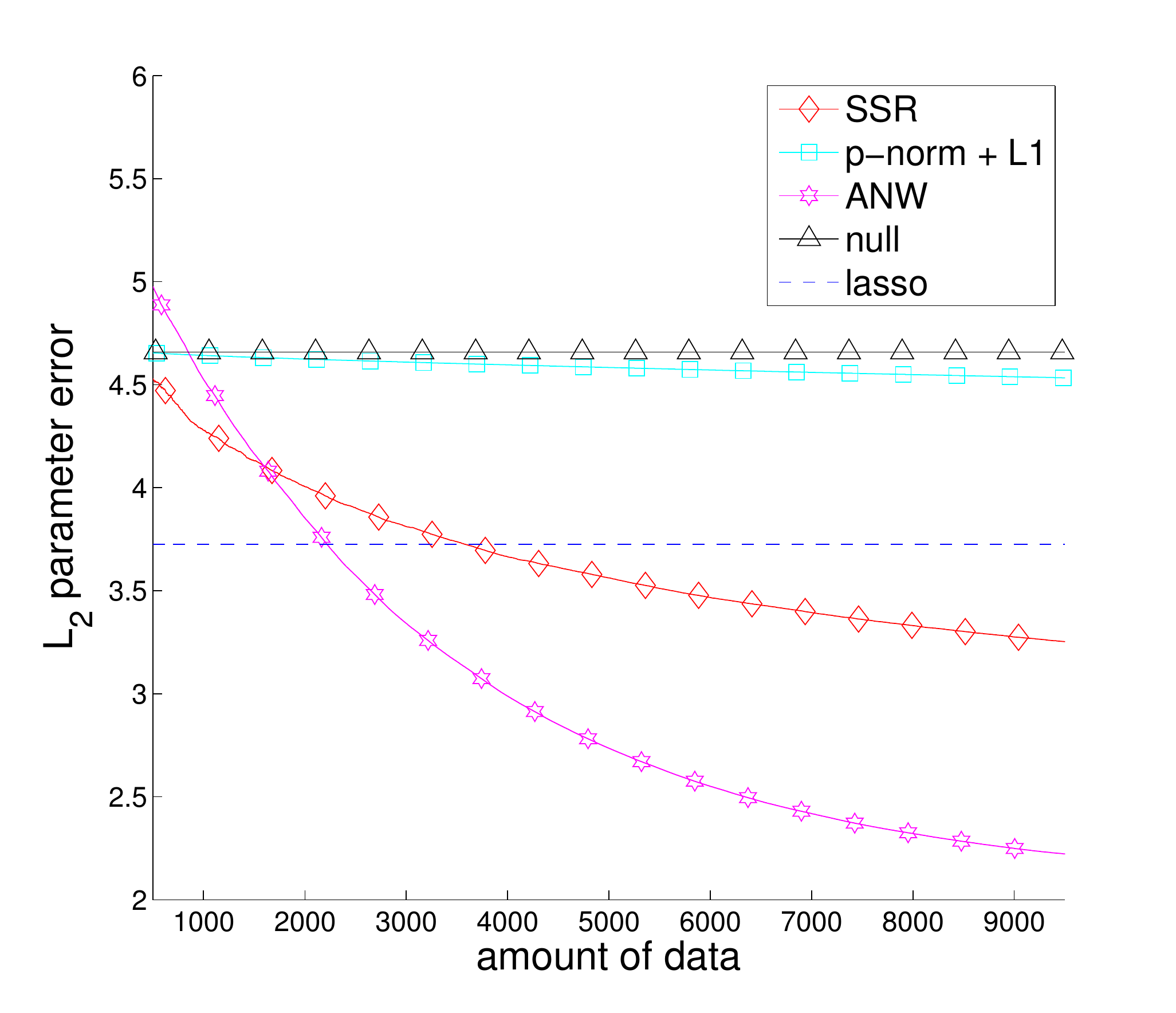}
\end{tabular}}
\caption{Simulation results. The prediction error is in terms of Huberized quadratic loss or logistic loss. We ran each algorithm with $T = 10,000$ training 
examples in total. 
The spike in error for ANW in the first row is because ANW is an epoch-based algorithm, and error tends to increase temporarily at the start of a new epoch.}
\label{fig:simu}
\end{center}
\end{figure}

Figure \ref{fig:simu} compares the performance
of each algorithm, in terms of both prediction error and parameter error. The prediction 
error at time $t$ is $f_t(w_t)$, where $w_t$ depends only on $(x_{1:t-1},y_{1:t-1})$, 
so that prediction error measures actual generalization ability and hence penalizes overfitting. 
Results are aggregated
over 10 realizations of the dataset for a fixed $w^*$. The prediction error is averaged 
over a sliding window consisting of the latest 1,000 examples. In addition,
timing information for all algorithms is given 
in Table~\ref{fig:timing}.

We first compare the online algorithms. Both SSR and ANW converge in squared error at a 
$\frac{1}{T}$ rate, while the $p$-norm algorithm converges at only a $\frac{1}{\sqrt{T}}$ rate. 
This can be seen in most of the plots, where SSR and ANW both outperform the $p$-norm algorithm; 
the exception is in the correlated inputs case, where the $p$-norm algorithm outperforms ANW 
in prediction error by a large margin and is not too much worse than SSR. The reason is that 
the $p$-norm algorithm is highly robust to correlations in the data, while ANW and SSR rely on 
restricted strong convexity and irrepresentability conditions, respectively, which tend to 
degrade as the inputs become more correlated.

We also note that, in comparison to other methods, ANW performs better in terms of
parameter error than prediction error.
The difference is particularly striking for the logistic regression task, where ANW has very poor prediction error 
but very good parameter error (substantially better than all other methods).
The fact that ANW incurs large losses while achieving low parameter error in the
classification example is not contradictory because,
with logistic regression, it is possible to obtain high prediction accuracy 
without recovering the optimal parameters.

Comparison with the lasso fit by \texttt{glmnet}, which we treat as an oracle, yields some interesting
results. Recall that the lasso was only trained using 2,500 training examples, as this was the most data
\texttt{glmnet} could handle before crashing. When the streaming methods have access to 
only 2,500 examples as well, the lasso is beating all of them, 
just as we would expect. However, as we bring in more data, our SSR method
starts to overtake it: in all examples, our method achieves lower prediction error around 4,000 training
examples. This phenomenon emphasizes the fact that, with large datasets, having computationally
efficient algorithms that let us work with more data is desirable.

Finally we note that, in terms of runtime, SSR is by far the fastest method, running 4 to 10 times 
faster than either of the two other algorithms. We emphasize that none of these methods were optimized, so the 
runtime of each method should be taken as a rough indicator rather than an exact measurement of 
efficiency. The bulk of the runtime difference among the online algorithms is due to the fact that both 
ANW and the $p$-norm algorithm require expensive floating point operations like taking $p$-th powers, 
while SSR requires only basic floating point operations like multiplication and addition.

\subsubsection*{Tuning} We selected the tuning parameters
using a single development set of size $1,000$. The tuning parameters
for $p$-norm and ANW are a step size and $L_1$ penalty, and the tuning parameters for 
SSR are the constants $\epsilon$, $\alpha$, and $\lambda$ in Algorithm~\ref{alg:stream}, 
the first two of which control the step size and the last of which controls the $L_1$ penalty.

\subsection{Genomics Data}
\label{sec:application}

The dataset, collected by the Wellcome Trust Case Control Consortium \citep{burton2007genome}, is
a genome-wide association study, comparing $d = 500,568$ single nucleotide polymorphisms (SNPs). The dataset
contains 2,000 cases of type 1 diabetes (T1D), and 1,500 controls,\footnote{The dataset
\citep{burton2007genome} has 3,000 controls, split into 2 sub-populations. We used one of the
two control populations (NBS).}
for a total of $T = 3,500$ data points. We coded each
SNP as 0 if it matches the wild type allele, and as 1 else.

We compared the same methods as before, using a random subset of $500$ data points for tuning 
hyperparameters (since the dataset is already small, we did not create a separate development set). We only 
compute prediction error since the true parameters are unknown. In Figure~\ref{fig:genomics}, we 
plot the prediction error averaged over $40$ random permutations of the data and over a sliding 
window of length $500$.
The results look largely similar to our simulations. As before, SSR outperforms the other streaming
methods, and eventually also beats the lasso oracle once it is able to see enough training data.

\begin{figure}
\centering
\includegraphics[height=0.4\textheight]{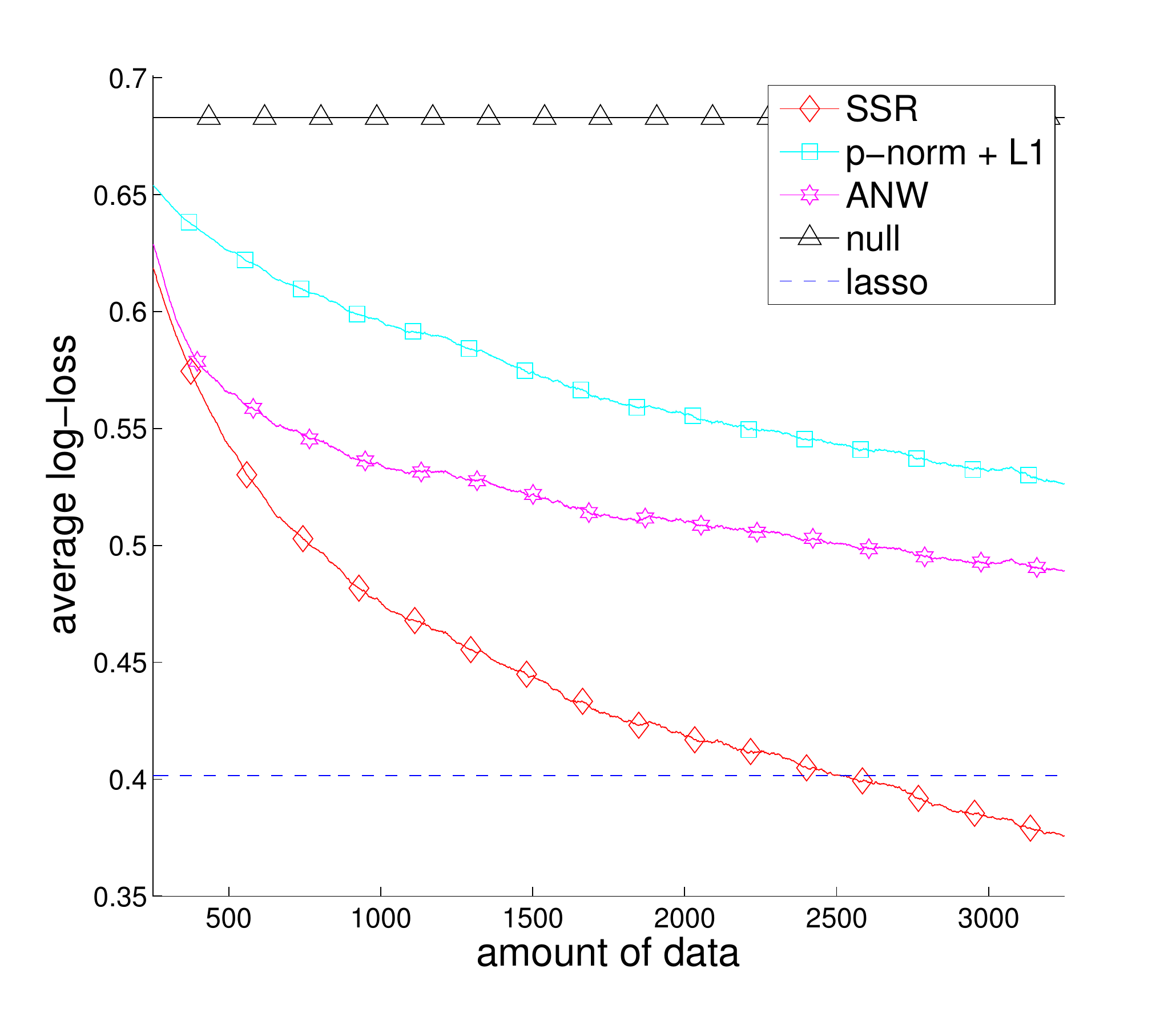}
\caption{Genomics example; logistic loss vs. amount of data.}
\label{fig:genomics}
\end{figure}

\section{Adaptive Mirror Descent with Sparsity Guarantees}
\label{sec:sparse}

We now begin to flesh out the intuition described in Section~\ref{sec:outline}. 
Our first goal is to provide an analogue to the mirror descent bound in Proposition
\ref{prop:adaptive} that takes advantage of sparsity. Intuitively, 
online algorithms with sparse weights $\Norm{w_t}_0 \leq k$ should behave as though they were 
evolving in a $k$-dimensional space instead of a $d$-dimensional space. However, the baseline 
bound \eqref{eq:meta} does not take advantage of this at all: it depends on $\|\nabla f_t(w_t)\|_2^2$, 
which could be as large as $B^2d$.

In this section, we strengthen the adaptive mirror descent bound of \citet{orabona2013general} 
in a way that reflects the effective sparsity of the $w_t$. 
We state our results in the standard adversarial setup. Statistical assumptions will become 
important in order to bound the cost of $\LI$-penalization (Section \ref{sec:statistical}).

Our main result that strengthens the adaptive mirror descent bound is Lemma~\ref{lem:bregman-restricted}, 
which replaces the Bregman divergence term $D_{\psi_t^*}\p{\theta_{t+1} || \theta_t}$ in \eqref{eq:adaptive} with the 
smaller term $D_{\psi_t^*}\p{\theta_{t+1}\sm{[S_t]} || \theta_t\sm{[S_t]}}$, which measures only 
the divergence over a subset $S_t$ of the coordinates.
As before, $\theta_{t+1}\sm{[S_t]}$ denotes the coordinates of $\theta_{t+1}$ that 
belong to $S_t$, with the rest of the coordinates zeroed out. We also let 
$\supp(w_t)$ denote the set of non-zero coordinates of $w_t$. 
Throughout, we defer most proofs to the appendix.

\begin{lemma}[adaptive mirror descent with sparsity]
\label{lem:bregman-restricted}
Suppose that adaptive mirror descent \eqref{eq:mirror} is run with convex regularizers $\psi_t$, and let $S_t$ be a set satisfying:
\begin{enumerate}
\item $\supp(w_t) \subseteq S_t$
\item $\supp(w_{t+1}) \subseteq S_t$
\item For all $w\sm{[S_t]}$, $\psi_t(w\sm{[S_t]},\tilde{w}\sm{[\neg S_t]})$ 
      is minimized at $\tilde{w}\sm{[\neg S_t]} = 0$.
\end{enumerate}
Then, 
\begin{align}
\label{eq:bregman-restricted}
\sum_{t=1}^T &\p{w_t-u}^{\top} \nabla f_t(w_t) 
\\ \notag
&\leq \psi_{T}(u) + \sum_{t=1}^T D_{\psi^*_t}\p{\theta_{t + 1}\sm{[S_t]} || \theta_{t}\sm{[S_t]}} + \sum_{t=1}^T [\psi_{t-1}(w_t) - \psi_t(w_t)].
\end{align}
\end{lemma}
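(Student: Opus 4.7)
The plan is to mimic the proof of Proposition~\ref{prop:adaptive} of \citet{orabona2013general} step-by-step, but at each use of Fenchel–Young duality I would evaluate $\psi_t^*$ at $\theta[S_t]$ rather than at $\theta$, using conditions 1--3 to show that no inequality is weakened by this substitution.

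The first step is to establish the ``substitution identity'' $\psi_t^*(\theta_t) = \psi_t^*(\theta_t[S_t])$ with matching Fenchel argmax $w_t$. To see this, note that by condition 3, for any $\theta$ the sup in $\psi_t^*(\theta[S_t]) = \sup_w \{(\theta[S_t])^\top w - \psi_t(w)\}$ is attained at some $w$ supported in $S_t$: for any $\tilde w[\neg S_t]\neq 0$, setting it to zero does not change $(\theta[S_t])^\top w$ and only decreases $\psi_t$. Hence $\psi_t^*(\theta_t[S_t]) = \sup_{\supp w\subseteq S_t} \{\theta_t^\top w - \psi_t(w)\}$, because on this subdomain $(\theta_t[S_t])^\top w = \theta_t^\top w$. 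By condition 1, the unconstrained argmax $w_t$ is already supported in $S_t$, so the constrained and unconstrained suprema agree, yielding $\psi_t^*(\theta_t) = \psi_t^*(\theta_t[S_t])$ and $\nabla \psi_t^*(\theta_t[S_t]) = w_t$.

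The second step replays Orabona's ``progress inequality'' but evaluates $\psi_t^*$ at $\theta_{t+1}[S_t]$. The one-sided Fenchel–Young bound $\psi_t^*(\theta_{t+1}[S_t]) \geq (\theta_{t+1}[S_t])^\top w_{t+1} - \psi_t(w_{t+1})$ holds for any test point, and by condition 2 the test point $w_{t+1}$ is supported in $S_t$, so $(\theta_{t+1}[S_t])^\top w_{t+1} = \theta_{t+1}^\top w_{t+1}$ — swapping $\theta_{t+1}$ for $\theta_{t+1}[S_t]$ costs nothing. Combining this with the Bregman expansion
\begin{equation*}
\psi_t^*(\theta_{t+1}[S_t]) - \psi_t^*(\theta_t[S_t]) = \langle w_t, \, \theta_{t+1}[S_t] - \theta_t[S_t]\rangle + D_{\psi_t^*}(\theta_{t+1}[S_t] \| \theta_t[S_t])
\end{equation*}
and the fact that $w_t$ is supported on $S_t$ (so $\langle w_t, \theta_{t+1}[S_t] - \theta_t[S_t]\rangle = \langle w_t, \theta_{t+1} - \theta_t\rangle$) reproduces Orabona's per-step inequality verbatim except with the \emph{restricted} Bregman divergence $D_{\psi_t^*}(\theta_{t+1}[S_t]\|\theta_t[S_t])$ in place of $D_{\psi_t^*}(\theta_{t+1}\|\theta_t)$.

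Summing over $t$, telescoping the $\psi_t^*$ contributions, and applying a final Fenchel–Young bound with the comparator $u$ then delivers the claimed inequality; the terms $\psi_T(u)$ and $\psi_{t-1}(w_t) - \psi_t(w_t)$ survive unchanged from the standard proof because they arise from the boundary and from regularizer increments, not from the Bregman step. I expect the main obstacle to be a careful verification of the substitution identity in Step 1 — in particular, propagating condition 3 through the sup-definition of $\psi_t^*$ and checking that the subgradient of $\psi_t^*$ transfers correctly to the restricted dual variable. Once that identity is in place, the rest of the derivation is essentially a line-for-line reproduction of the proof of Proposition~\ref{prop:adaptive}, with $\theta_t$ replaced by $\theta_t[S_t]$ throughout the dual-space computations.
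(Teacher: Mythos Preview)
Your proposal is correct. The paper takes a slightly different and more modular route: rather than replaying the Orabona argument with $\theta[S_t]$ substituted at each dual step, it defines the indicator-augmented regularizer
\[
\psi_t^+(w) = \begin{cases} \psi_t(w) & \text{if } \supp(w) \subseteq S_t, \\ +\infty & \text{otherwise}, \end{cases}
\]
checks (via conditions 1 and 2) that running mirror descent with $\psi_t^+$ produces the same iterates $w_t$ and leaves the increment terms $\psi_{t-1}(w_t)-\psi_t(w_t)$ unchanged, and then applies Proposition~\ref{prop:adaptive} as a black box to the sequence $\psi_t^+$. The final step is the computation
\[
(\psi_t^+)^*(\theta) \;=\; \sup_{\supp(w)\subseteq S_t}\{\theta^\top w - \psi_t(w)\} \;=\; \psi_t^*(\theta[S_t]),
\]
using condition 3, which collapses $D_{(\psi_t^+)^*}(\theta_{t+1}\,\|\,\theta_t)$ to $D_{\psi_t^*}(\theta_{t+1}[S_t]\,\|\,\theta_t[S_t])$. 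Your ``substitution identity'' in Step~1 is precisely this conjugate calculation, so the two arguments rest on the same key observation; the difference is purely one of packaging. The paper's version buys modularity by reusing Proposition~\ref{prop:adaptive} verbatim, whereas your version unrolls that proposition's proof and makes more transparent exactly where each of conditions 1, 2, 3 is invoked in the dual-space manipulations.
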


We emphasize that this result does not require any statistical assumptions about the
data-generating process, and relies only on convex optimization machinery. Later, we
will use statistical assumptions to control the size of the active set $S_t$ and thus
bound the right-hand-side of \eqref{eq:bregman-restricted}.

If we apply Lemma~\ref{lem:bregman-restricted} to the choice of $\psi_t$ given 
in \eqref{eq:psi-def}, we get Lemma~\ref{lemm:sparse_amd} below.
The resulting bound is identical to the one in \eqref{cor:adaptive}, except we have replaced 
$\|\nabla f_t(w_t)\|_2^2$ with a term that depends only on an \emph{effective dimension} $k_t$.

\begin{lemma}[decomposition with sparsity]
\label{lemm:sparse_amd}
Let $f_t(\cdot)$ be a sequence of convex loss functions, and let $w_t$ be selected by 
adaptive mirror descent with regularizers \eqref{eq:psi-def}. Then
\begin{align}
\label{eq:sparse_amd}
\sum_{t = 1}^T &\p{w_t-u}^{\top} \nabla f_t \p{w_t} \le \MainRegret + \CostOfSparsity + \Quadratic, \where \\
\label{eq:MainRegret}
\MainRegret & = \frac{\epsilon}{2} \Norm{u}_2^2 + \frac{B^2}{2} \sum_{t=1}^T \frac{k_t}{\epsilon + \eta\, t},
\end{align}
$\MainRegret$ replaces $\MainRegret_0$ in Corollary \ref{cor:adaptive}, and
$\CostOfSparsity$ and $\Quadratic$ are defined in \eqref{eq:CostOfSparsity} and \eqref{eq:Quadratic}.
Here, $k_t = \Abs{S_t}$ is the number of \emph{active features}, and we take 
$S_t = \cup_{s=1}^{t+1} \supp(w_s)$.
\end{lemma}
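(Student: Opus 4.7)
\textbf{Proof plan for Lemma~\ref{lemm:sparse_amd}.} The plan is to instantiate Lemma~\ref{lem:bregman-restricted} with the particular choice of active set $S_t = \bigcup_{s=1}^{t+1}\supp(w_s)$ and with the regularizer from \eqref{eq:psi1}, then mirror the algebraic manipulation that led from Proposition~\ref{prop:adaptive} to Corollary~\ref{cor:adaptive}, using a smoothness bound on $\psi_t^*$ to replace the per-step gradient norm by its restriction to $S_t$. The only genuinely new ingredient compared to Corollary~\ref{cor:adaptive} is the replacement of the full Bregman divergence by the restricted one; all other pieces are recycled.

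First I would verify the three hypotheses of Lemma~\ref{lem:bregman-restricted} for $S_t = \bigcup_{s=1}^{t+1}\supp(w_s)$. Conditions (1) and (2) are immediate by construction. For condition (3), note that for any $i \notin S_t$ all previous iterates $w_s$ have $w_s^i = 0$ (for $s \le t-1$), so along coordinate $i$ the regularizer reduces to $\tfrac{\epsilon + \eta(t-1)}{2}(w^i)^2 + \lambda_t |w^i|$, which is strictly minimized at $w^i=0$. Thus Lemma~\ref{lem:bregman-restricted} applies and yields the inequality \eqref{eq:bregman-restricted} with Bregman divergences restricted to $S_t$.

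Next I would bound the restricted Bregman divergence. The quadratic part of $\psi_t$ is $(\epsilon+\eta(t-1))$-strongly convex in the Euclidean norm, and the added $\lambda_t\|\cdot\|_1$ only increases the strong convexity modulus, so $\psi_t^*$ is $1/(\epsilon+\eta(t-1))$-smooth on the dual. Combined with condition (3), which ensures that the optimal primal variable has zero off-support entries so that the dual smoothness passes through to the restricted coordinates, I get
\[
D_{\psi_t^*}\!\bigl(\theta_{t+1}[S_t]\,\|\,\theta_{t}[S_t]\bigr)
\;\le\; \frac{1}{2(\epsilon+\eta(t-1))}\,\bigl\|\theta_{t+1}[S_t]-\theta_t[S_t]\bigr\|_2^2
\;=\; \frac{1}{2(\epsilon+\eta(t-1))}\,\bigl\|\nabla f_t(w_t)[S_t]\bigr\|_2^2.
\]
By assumption (3) of Section~\ref{sec:setup}, $\|\nabla f_t(w_t)\|_\infty \le B$, and since the restriction to $S_t$ has at most $k_t$ nonzero coordinates, $\|\nabla f_t(w_t)[S_t]\|_2^2 \le k_t B^2$. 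Summing over $t$ yields the $\Omega$ term (up to an index shift between $t$ and $t-1$ in the denominator, absorbed into the constants).

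Finally I would handle the remaining two summations in \eqref{eq:bregman-restricted} exactly as in the proof of Corollary~\ref{cor:adaptive}: the $\psi_T(u)$ term contributes $\tfrac{\epsilon}{2}\|u\|_2^2$ together with a $\tfrac{\eta}{2}\sum_{s=1}^{T-1}\|u-w_s\|_2^2$ summand and a $\lambda_T\|u\|_1$ summand; the telescoping difference $\sum_t[\psi_{t-1}(w_t)-\psi_t(w_t)]$ contributes $-\tfrac{\eta}{2}\sum_t\|w_t-w_{t-1}\|_2^2$ plus $\sum_t(\lambda_{t-1}-\lambda_t)\|w_t\|_1$. Regrouping the $\eta$ quadratic contributions into $Q = \tfrac{\eta}{2}\sum_t\|w_t-u\|_2^2$ and the $L_1$ contributions into $\Lambda = \sum_t(\lambda_{t-1}-\lambda_t)(\|w_t\|_1-\|u\|_1)$ is the same bookkeeping that underlies Corollary~\ref{cor:adaptive}. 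The main obstacle is making the smoothness/divergence bound above fully rigorous when $\psi_t$ contains the non-smooth $L_1$ term: one has to argue that restricting to $S_t$ effectively removes the non-smoothness, which is precisely the content of condition (3) of Lemma~\ref{lem:bregman-restricted}, but the computation of $\psi_t^*$ on the active coordinates needs to be spelled out carefully (likely by observing that $\psi_t^*$ decomposes coordinatewise into a Huber-like function whose smoothness modulus is uniformly $1/(\epsilon+\eta(t-1))$).
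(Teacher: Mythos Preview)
Your proposal is correct and follows essentially the same route as the paper: invoke Lemma~\ref{lem:bregman-restricted} with $S_t=\bigcup_{s=1}^{t+1}\supp(w_s)$, verify its three conditions, bound the restricted Bregman divergence via the strong-convexity/smoothness duality, and then do the same bookkeeping as in Corollary~\ref{cor:adaptive}.

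Two minor remarks. First, the statement references the regularizer \eqref{eq:psi-def}, whose quadratic sum runs from $s=1$ to $t$ (not $t-1$ as in \eqref{eq:psi1}); with this form $\psi_t$ is $(\epsilon+\eta t)$-strongly convex and the quadratic part of $\psi_{t-1}(w_t)-\psi_t(w_t)$ vanishes identically, so the bookkeeping is cleaner than you indicate (no leftover $\|w_t-w_{t-1}\|_2^2$ terms to absorb). Second, your closing worry about the non-smooth $\ell_1$ term is unnecessary: the standard fact that $\gamma$-strong convexity of $\psi_t$ implies $D_{\psi_t^*}(x\|y)\le\frac{1}{2\gamma}\|x-y\|_2^2$ holds for any convex $\psi_t$, smooth or not, so you can bound the restricted divergence directly without computing $\psi_t^*$ coordinatewise.
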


\paragraph{Example: forcing sparsity}
The statement of Lemma \ref{lemm:sparse_amd} is fairly abstract, and so it can be helpful to elucidate its implications with 
some examples. First, suppose that we determine the $\lambda_t$ sequence in such a way to force the $w_t$ to be $k$-sparse:
\begin{equation}
\label{eq:lambda_force_sparse}
\lambda_{t + 1} = \max \left\{ \lambda_t, \, \left|\theta_t\right|^{(k+1, \, d)} + B  \right\},
\end{equation}
where $|\theta_t|^{(k+1, \, d)}$ denotes the $(k+1)$-st largest (in absolute
magnitude) coordinate of $\theta_t$. Also 
suppose that we set $\eta = 0$ for simplicity.
Then, we can simplify our result to the following:

\begin{corollary}[simplification with sparsity]
\label{coro:force_sparse}
Under the conditions of Lemma \ref{lemm:sparse_amd}, suppose that $\lambda_t$ is 
set using \eqref{eq:lambda_force_sparse}, $\eta = 0$, and that the $f_t(\cdot)$ are 
convex. Then, we obtain the regret bound
\begin{equation}
\label{eq:force_sparse}
\Regret(u)
\leq \frac{\epsilon R^2}{2}
+ \frac{1}{2\epsilon}kB^2T
+ \lambda_TR.
\end{equation}
If we optimize the bound with $\epsilon = \frac{B}{R}\sqrt{kT}$,
then \eqref{eq:force_sparse} is equal to $R\p{B\sqrt{kT} + \lambda_T}$.
\proof
By convexity of $f_t$, we have $f_t(w_t) - f_t(u) \le (w_t - u)^\top \nabla f_t(w_t)$.
Also, since $\eta = 0$, we can actually take $S_t = \supp(w_t) \cap \supp(w_{t+1})$ and 
still satisfy the conditions of Lemma~\ref{lem:bregman-restricted}.
To get the RHS of \eqref{eq:force_sparse} from \eqref{eq:sparse_amd}, we 
use the inequalities $k_t = |S_t| \leq k$, 
$\Norm{u}_2 \leq \Norm{u}_1 \leq R$ and $(\lambda_{t - 1} - \lambda_t)\|w_t\|_1 \leq 0$; this latter 
inequality implies that $\sum_{t=1}^T (\lambda_{t-1} - \lambda_t)(\|w_t\|_1 - \|u\|_1) \leq \lambda_T \|u\|_1$.
\endproof
\end{corollary}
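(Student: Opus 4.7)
The plan is to apply Lemma \ref{lemm:sparse_amd} directly: by convexity of $f_t$, the ordinary regret $\Regret(u) = \sum_t (f_t(w_t) - f_t(u))$ is bounded by the linearized regret $\sum_t (w_t - u)^\top \nabla f_t(w_t)$, which in turn is dominated by the three-term decomposition $\MainRegret + \CostOfSparsity + \Quadratic$. I then treat each term in isolation.

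The easy terms come first. Because $\eta = 0$, the quadratic term $\Quadratic = \frac{\eta}{2}\sum_t \|w_t - u\|_2^2$ vanishes outright. The cost-of-sparsity term $\CostOfSparsity = \sum_{t=1}^T (\lambda_{t-1} - \lambda_t)(\|w_t\|_1 - \|u\|_1)$ telescopes favorably: the recursive rule \eqref{eq:lambda_force_sparse} makes $\lambda_t$ non-decreasing, so each increment satisfies $\lambda_{t-1} - \lambda_t \le 0$, hence $(\lambda_{t-1}-\lambda_t)\|w_t\|_1 \le 0$, and the remaining piece $-\sum_t (\lambda_{t-1}-\lambda_t)\|u\|_1 = (\lambda_T - \lambda_0)\|u\|_1$ telescopes and is bounded by $\lambda_T R$ using $\|u\|_1 \le R$.

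The main step — and, I expect, the only subtle one — is controlling $\MainRegret = \frac{\epsilon}{2}\|u\|_2^2 + \frac{B^2}{2}\sum_t \frac{k_t}{\epsilon + \eta t}$ by showing that the effective dimension $k_t$ is always at most $k$ under the rule \eqref{eq:lambda_force_sparse}. The argument I have in mind is: the threshold $\lambda_{t+1}$ is forced to be at least $|\theta_t|^{(k+1,d)} + B$, so by the bounded-gradient assumption and the relation $\theta_{t+1} = \theta_t - \nabla f_t(w_t)$ (valid since $\eta=0$), the $(k{+}1)$-st order statistic of $|\theta_{t+1}|$ cannot grow by more than $B$ over one step, giving $\lambda_{t+1} \ge |\theta_{t+1}|^{(k+1,d)}$. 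Soft-thresholding at $\lambda_{t+1}$ therefore zeros out all but the top $k$ coordinates of $\theta_{t+1}$, so $\|w_{t+1}\|_0 \le k$ for every $t$. Since $\eta=0$, the proof of Lemma~\ref{lem:bregman-restricted} only requires the regularizer to decouple across coordinates — which the pure $\frac{\epsilon}{2}\|\cdot\|_2^2 + \lambda_t\|\cdot\|_1$ regularizer does — so I can take $S_t$ to be just $\supp(w_t) \cap \supp(w_{t+1})$ (a set of size $\le k$) rather than the union, keeping $k_t \le k$ throughout.

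With this in hand, $\MainRegret \le \frac{\epsilon}{2}R^2 + \frac{kB^2 T}{2\epsilon}$, which combined with the $\lambda_T R$ bound on $\CostOfSparsity$ yields \eqref{eq:force_sparse}. The final optimization is elementary calculus: balancing $\frac{\epsilon R^2}{2} = \frac{kB^2 T}{2\epsilon}$ gives $\epsilon = \frac{B}{R}\sqrt{kT}$ and the two terms sum to $RB\sqrt{kT}$, recovering the stated simplified bound.
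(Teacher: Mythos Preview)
Your proposal is correct and follows essentially the same route as the paper's proof: linearize via convexity, kill $\Quadratic$ since $\eta=0$, bound $\CostOfSparsity$ by $\lambda_T R$ using monotonicity of $\lambda_t$, and control $\MainRegret$ by taking $S_t = \supp(w_t)\cap\supp(w_{t+1})$ (allowed since the $\eta=0$ regularizer decouples across coordinates) with $k_t \le k$. You supply more detail than the paper on why the schedule \eqref{eq:lambda_force_sparse} guarantees $\|w_{t+1}\|_0 \le k$ --- the order-statistic-plus-$B$ argument --- which the paper leaves implicit, but the approaches are otherwise identical.
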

We have shown that stochastic gradient descent can achieve regret
that depends on the sparsity level $k$ rather than the ambient dimension $d$,
as long as the $L_1$ penalty is large enough.
Previous analyses \citep[e.g.,][]{xiao2010dual}
had an analogous regret bound of $R(B\sqrt{dT} + \lambda_T)$, which could be substantially worse
when $d$ is large.

\subsection{Interlude: Sparse Learning with Strongly Convex Loss Functions}
\label{sec:strong_cvx}

In the above section we showed that, when working with generic convex
loss functions $f_t$, we could use our framework to improve a 
$\sqrt{dT}$ factor into $\sqrt{kT}$; in other words, we could 
bound the regret in terms of the effective dimension $k$ rather than 
the ambient dimension $d$. We can thus achieve low regret in high dimensions while using 
an $L_2$-regularizer, as opposed to previous work \citep{shalev2011stochastic} that used an
$L_p$-regularizer with $p = \frac{2\log(d)}{2\log(d)-1}$. 
This fact becomes significant when 
we consider strong convexity properties of our loss functions, where it
is advantageous to use a regularizer with the same strong 
convexity structure as the loss, and where $L_2$-strong convexity of the loss
function is much more common than strong convexity 
in other $L_p$-norms.

In the standard online convex optimization setup, it is well
known \citep{duchi2010composite,hazan2007logarithmic} that 
if the loss functions $f_t$ are strongly convex, we can use faster learning 
rates to get excess risk on the order of $\log T$ rather than $\sqrt{T}$. 
This is because the strong convexity of $f_t$ allows us to remove the 
$\sum_{t=1}^T \|w_t-u\|_2^2$ term from bounds like \eqref{eq:sparse_amd}.

In practice, the loss function $f_t$ is only strongly convex in expectation,
and we will analyze this setting in Section~\ref{sec:expected_cvx}.
But as a warm-up, let us analyze the case where each $f_t$ is actually strongly convex. 
In this case, we can remove the $Q$ term from our regret bound \eqref{eq:sparse_amd} entirely:

\begin{theorem}[decomposition with sparsity and strong convexity]
\label{theo:strong_sparse}
Suppose that we are given a sequence of $\alpha$-strongly convex
losses $f_1, \, \ldots, \, f_T$, and that we run adaptive mirror descent with the
regularizers $\psi_t$ from \eqref{eq:psi-def} with $\eta = \alpha$.
Then, using $\MainRegret$ and $\CostOfSparsity$ from \eqref{eq:sparse_amd}, we have
\begin{align}
\label{eq:strong_sparse}
\Regret(u) =
\sum_{t = 1}^T &\p{f_t\p{w_t} - f_t\p{u}}
\le \MainRegret + \CostOfSparsity. 
\end{align}
\end{theorem}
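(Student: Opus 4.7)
The plan is to combine Lemma~\ref{lemm:sparse_amd} with the strong convexity of $f_t$ so that the quadratic penalty $\Quadratic$ exactly cancels the curvature gap between the linearized regret and the true regret, leaving only $\MainRegret + \CostOfSparsity$ on the right-hand side.

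First I would invoke Lemma~\ref{lemm:sparse_amd} directly, which is stated for arbitrary convex losses, to obtain
\[
\sum_{t=1}^T (w_t - u)^\top \nabla f_t(w_t) \;\le\; \MainRegret + \CostOfSparsity + \Quadratic,
\]
with $\Quadratic = \frac{\eta}{2}\sum_{t=1}^T \|w_t - u\|_2^2$ and $\MainRegret$ as in \eqref{eq:MainRegret} (note that $\MainRegret$ already incorporates the chosen step parameter $\eta$, so substituting $\eta = \alpha$ will yield the final $\MainRegret$ without further change). Lemma~\ref{lemm:sparse_amd} applies without any statistical assumption, so this step is immediate.

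Next, I would use the definition of $\alpha$-strong convexity of each $f_t$:
\[
f_t(u) \;\ge\; f_t(w_t) + \nabla f_t(w_t)^\top (u - w_t) + \frac{\alpha}{2}\|w_t - u\|_2^2,
\]
which after rearrangement and summation gives
\[
\sum_{t=1}^T (w_t - u)^\top \nabla f_t(w_t) \;\ge\; \sum_{t=1}^T \bigl(f_t(w_t) - f_t(u)\bigr) + \frac{\alpha}{2}\sum_{t=1}^T \|w_t - u\|_2^2.
\]
Chaining this lower bound with the upper bound from Lemma~\ref{lemm:sparse_amd}, and noting that with $\eta = \alpha$ the quadratic term in $\Quadratic$ exactly matches the curvature term produced by strong convexity, the two cancel and we are left with $\Regret(u) \le \MainRegret + \CostOfSparsity$, as claimed.

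There is no real obstacle here: the only ingredient beyond Lemma~\ref{lemm:sparse_amd} is the first-order definition of strong convexity, and the choice $\eta = \alpha$ is calibrated precisely so the $\|w_t - u\|_2^2$ terms cancel pointwise in $t$. The mild thing to check is that $\MainRegret$ and $\CostOfSparsity$ are themselves defined intrinsically in terms of $\eta = \alpha$ and the sparsity-inducing schedule $\lambda_t$, so that the final bound on the right-hand side is well-defined with the parameter choice we have made; this is already built into Lemma~\ref{lemm:sparse_amd}.
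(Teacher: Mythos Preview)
Your proposal is correct and matches the paper's proof essentially line for line: invoke Lemma~\ref{lemm:sparse_amd} to bound the linearized regret by $\MainRegret + \CostOfSparsity + \Quadratic$, apply $\alpha$-strong convexity of each $f_t$ to get $f_t(w_t) - f_t(u) \le (w_t-u)^\top \nabla f_t(w_t) - \tfrac{\alpha}{2}\|w_t-u\|_2^2$, and chain the two so that with $\eta=\alpha$ the quadratic terms cancel. There is nothing to add.
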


The key is that with $f_t$ $\alpha$-strongly convex,
we have $f_t\p{w_t} - f_t\p{u} \leq \nabla f_t(w_t)^{\top}\p{w_t-u} - \frac{\alpha}{2}\|w_t-u\|_2^2$,
from which the result follows by invoking \eqref{eq:sparse_amd}.
As a result, we can remove the $\Quadratic$ term while still allowing $\eta > 0$,
which can help reduce $\Omega$.

\paragraph{Example: forcing sparsity}
We can again use the sparsity-forcing schedule $\lambda_t$ from \eqref{eq:lambda_force_sparse} to gain some intuition.

\begin{corollary}[simplification with sparsity and strong convexity]
\label{coro:strong_sparse}
Under the conditions of Theorem \ref{theo:strong_sparse}, suppose that we set $\lambda_t$ 
using \eqref{eq:lambda_force_sparse} and set $\epsilon = 0$. Then
\begin{equation}
\label{eq:coro_strong_sparse}
\Regret(u)
\leq \frac{kB^2}{2 \alpha} \p{1 + \log T} + \lambda_T \Norm{u}_1.
\end{equation}
\end{corollary}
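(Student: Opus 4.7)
The strategy is to invoke Theorem~\ref{theo:strong_sparse}, which yields the decomposition $\Regret(u) \leq \MainRegret + \CostOfSparsity$, and then control the two summands separately under the choices $\epsilon=0$, $\eta=\alpha$, and $\lambda_t$ from \eqref{eq:lambda_force_sparse}.

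For the cost of sparsity, I would use the fact that \eqref{eq:lambda_force_sparse} makes $\lambda_t$ non-decreasing, so $\lambda_{t-1} - \lambda_t \leq 0$. Splitting
\begin{equation*}
\CostOfSparsity \;=\; \sum_{t=1}^T (\lambda_{t-1} - \lambda_t)\Norm{w_t}_1 \;+\; \Norm{u}_1 \sum_{t=1}^T (\lambda_t - \lambda_{t-1}),
\end{equation*}
the first sum is non-positive term-by-term (each factor $\lambda_{t-1} - \lambda_t \leq 0$ and $\Norm{w_t}_1 \geq 0$) and can be dropped, while the second sum telescopes to $\lambda_T - \lambda_0 \leq \lambda_T$. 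Hence $\CostOfSparsity \leq \lambda_T \Norm{u}_1$.

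For the main term, with $\epsilon=0$ and $\eta=\alpha$, \eqref{eq:MainRegret} collapses to $\MainRegret = (B^2/(2\alpha))\sum_{t=1}^T k_t/t$. Granting $k_t \leq k$ uniformly in $t$, the harmonic-sum estimate $\sum_{t=1}^T 1/t \leq 1 + \log T$ yields $\MainRegret \leq kB^2(1+\log T)/(2\alpha)$, and adding the bound on $\CostOfSparsity$ gives \eqref{eq:coro_strong_sparse}. To get $k_t \leq k$ I would mirror the argument used in the proof of Corollary~\ref{coro:force_sparse}: take $S_t = \supp(w_t)\cup \supp(w_{t+1})$, and combine $\Norm{\theta_{t+1}-\theta_t}_\infty \leq B$ (or $B + \alpha R$, see below) with the $+B$ margin in \eqref{eq:lambda_force_sparse} to show that any coordinate outside the top $k$ entries of $|\theta_t|$ stays below the threshold at both steps $t$ and $t+1$. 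Consequently $\supp(w_t)$ and $\supp(w_{t+1})$ are both contained in that size-$k$ set and their union has cardinality at most $k$.

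The hard part will be verifying condition~3 of Lemma~\ref{lem:bregman-restricted} for this choice of $S_t$ when $\eta=\alpha>0$. In contrast to the $\eta=0$ case in Corollary~\ref{coro:force_sparse}, the regularizer $\psi_t$ now carries a quadratic pullback $(\eta/2)\sum_{s=1}^{t-1}\Norm{w-w_s}_2^2$, so for $i\notin S_t$ the coordinate-wise minimizer of $\psi_t$ sees a linear drift $-\eta \sum_s (w_s)_i$ coming from past iterates; the $L_1$ penalty $\lambda_t$ must dominate this drift for the minimizer to pin that coordinate at zero. The plan is to argue inductively that the $+B$ margin combined with monotonicity of $\lambda_t$ enforces support stability, so that a coordinate which drops out of the active set never re-accumulates enough historical mass to resurface. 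This may require slightly inflating the constant in \eqref{eq:lambda_force_sparse} from $B$ to $B+\alpha R$ to account for the extra $\eta w_t$ term in the algorithm's $\theta$-update, which is the only place the strongly convex setting differs qualitatively from the analysis underlying Corollary~\ref{coro:force_sparse}.
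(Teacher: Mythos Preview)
Your approach is exactly what the paper has in mind: Corollary~\ref{coro:strong_sparse} is stated without proof as a direct parallel to Corollary~\ref{coro:force_sparse}, so the implicit argument is to invoke Theorem~\ref{theo:strong_sparse} and bound $\MainRegret$ and $\CostOfSparsity$ just as in that earlier corollary. Your telescoping bound $\CostOfSparsity\le\lambda_T\Norm{u}_1$ and your harmonic-sum bound on $\MainRegret$ (granting $k_t\le k$) match that template verbatim.

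Where you go beyond the paper is in flagging a genuine subtlety that the paper does not address. In the proof of Corollary~\ref{coro:force_sparse} the paper writes ``since $\eta=0$, we can actually take $S_t=\supp(w_t)\cup\supp(w_{t+1})$''; the qualifier ``since $\eta=0$'' is doing real work, because with $\eta>0$ condition~3 of Lemma~\ref{lem:bregman-restricted} is no longer automatic for this smaller $S_t$ (the term $\sum_s\Norm{w-w_s}_2^2$ pulls the coordinate-wise minimizer toward the historical average of the $w_s$, not toward zero). The paper never revisits this point for Corollary~\ref{coro:strong_sparse}; the result is offered as an illustrative aside and immediately critiqued on other grounds. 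So your concern is legitimate and the paper simply leaves the gap open. Your inductive support-stability plan is a reasonable line of attack, but note that it does not yet close the gap as stated: verifying condition~3 requires $\eta\,\bigl|\sum_{s\le t}(w_s)_i\bigr|\le\lambda_t$ for every $i\notin S_t$, which is a different quantity from the thresholded $|\tilde\theta_{t,i}|$ that your margin argument controls.
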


At first glance, it may seem that this result gives us an even better bound than the one 
stated in Theorem~\ref{thm:main-risk}. 
The main term in the bound \eqref{eq:coro_strong_sparse} scales as $\log T$ and
has no explicit dependence on $d$. However, we should not forget the $\lambda_T
\Norm{u}_1$ term required to keep the weights sparse: in general, even if all
but a small 
number of coordinates of $\nabla f_t(w_t)$ are zero-mean random noise,
$\lambda_T$ will need to grow as $\sqrt{T}$ (in fact, $\sqrt{T\log(d)}$) in
order to preserve sparsity. 
This is because an unbiased random walk will still have deviation $\sqrt{T}$
from zero after $T$ steps.
Thus, although we managed to make the main term of the regret bound small,  
the $\lambda_T \Norm{u}_1$ term still looms.
In the absence of strong convexity, having $\lambda_T = O(\sqrt{T})$ would be
acceptable since the first two terms of \eqref{eq:force_sparse} would grow as $\sqrt{T}$ anyway 
in this case,
but since we are after a $\log T$ dependence, we need to work harder.

In the next section, we will show 
that, if we make statistical assumptions and restrict our attention to the minimizer 
$w^*$ of $\Lstar$, the cost of penalization becomes manageable. 
Specifically, we will show that the 
$\sum_{t = 1}^T \p{\lambda_{t - 1} - \lambda_t} \Norm{w_t}_1$ term in \eqref{eq:strong_sparse}
mostly cancels 
out the problematic $\lambda_T \Norm{u}_1$ term when $u = w^*$, and that the remainder scales 
only logarithmically in $T$.

\section{The Cost of Sparsity}
\label{sec:statistical}

In the previous section, we showed how to control the main term of an adaptive mirror 
descent regret bound by exploiting sparsity. In order to achieve sparsity, however, 
we had to impose an $\LI$ penalty which introduces a cost of sparsity term
\eqref{eq:CostOfSparsity}, which is:
\begin{align}
\label{eq:CostOfSparsity2}
\Lambda & \eqdef \sum_{t=1}^T \p{\lambda_{t-1} - \lambda_t}\p{\Norm{w_t}_1 - \Norm{w^*}_1}.
\end{align}
Before, our regret bounds \eqref{eq:strong_sparse}
held against any comparator $u \in \sH$,
but all the results in this section rely on statistical assumptions and thus
will only hold when $u = w^*$, the expected risk minimizer.

In general, we will need $\lambda_T$ to scale as $\sqrt{T \log d}$ to ensure sparsity.
If we use the naive upper bound 
$\Lambda = \lambda_T \Norm{w^*}_1 + \sum_{t = 1}^T \p{\lambda_{t - 1} - \lambda_t} \Norm{w_t}_1
\leq \lambda_T \Norm{w^*}_1$, which holds so long as $\lambda_{t} \ge \lambda_{t-1}$,
we again get regret bounds that grow as $\sqrt{T}$, even under statistical assumptions.
However, we can do better than this naive bound:
we will show that it is possible to substantially cut the cost of sparsity by using 
an $\LI$ penalty that grows steadily in $t$;
in our analysis, we use $\lambda_t = \lambda\sqrt{t+1}$. Using Assumptions (1-3) from 
Section~\ref{sec:main}, we can obtain bounds for $\Lambda$ that grow only logarithmically in $T$:

\begin{lemma}[cost of sparsity]
\label{lemm:l1_cost}
Suppose that Assumptions (1-3) of Section \ref{sec:main} hold, and that $\lambda_t = \lambda\sqrt{t+1}$. 
Then, for any $\delta > 0$, with probability $1 - \delta$,
\begin{align}
\label{eq:l1_cost}
&\CostOfSparsity  \leq \frac{\lambda}{2} \, \sqrt{k \p{1 + \log T}} \\
\notag
&\ \ \ \ \ \ \times \sqrt{\frac{4}{\alpha} \sum_{t = 1}^T \p{f_t\p{w_t\sm{[S]}} - f_t \p{w^*}} +\frac{9 k B^2 \log \p{\log_2(2T)/\delta}}{4\alpha^2}}.
\end{align}
\end{lemma}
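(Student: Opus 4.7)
The plan is to reduce the bound on $\CostOfSparsity$ to a bound on the aggregate signal-set deviation $U := \sum_{t=1}^T \|w_t\sm{[S]} - w^*\|_2^2$ via Cauchy--Schwarz, and then control $U$ using strong convexity of $\Lstar$ together with a time-uniform martingale tail bound that converts excess expected loss into excess empirical loss.

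Since $\lambda_t = \lambda\sqrt{t+1}$ is increasing, I would first rewrite $\CostOfSparsity = \sum_{t=1}^T (\lambda_t - \lambda_{t-1})(\|w^*\|_1 - \|w_t\|_1)$. Because $w^*$ is supported on $S$ and $\|w_t\|_1 \geq \|w_t\sm{[S]}\|_1$, the triangle inequality gives $\|w^*\|_1 - \|w_t\|_1 \leq \|w^* - w_t\sm{[S]}\|_1 \leq \sqrt{k}\,\|w^* - w_t\sm{[S]}\|_2$, the last step using that $w^* - w_t\sm{[S]}$ lives on a set of size $k$. Applying Cauchy--Schwarz in $t$ and using $\lambda_t - \lambda_{t-1} \leq \lambda/(2\sqrt{t})$ together with $\sum_{t=1}^T 1/t \leq 1+\log T$ yields $\CostOfSparsity \leq \tfrac{\lambda}{2}\sqrt{k(1+\log T)}\cdot \sqrt{U}$. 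Thus the remaining task is to bound $U$ by the expression inside the second square root of \eqref{eq:l1_cost}.

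By Assumption~(2), $\Lstar$ is $\alpha$-strongly convex in the $S$-coordinates with minimizer $w^*$, so $\Lstar(w_t\sm{[S]}) - \Lstar(w^*) \geq \tfrac{\alpha}{2}\|w_t\sm{[S]} - w^*\|_2^2$, and hence $U \leq \tfrac{2}{\alpha}\sum_{t=1}^T (\Lstar(w_t\sm{[S]}) - \Lstar(w^*))$. I then introduce the martingale increments $X_t := (\Lstar(w_t\sm{[S]}) - \Lstar(w^*)) - (f_t(w_t\sm{[S]}) - f_t(w^*))$, which are mean-zero given $f_1,\dots,f_{t-1}$ by Assumption~(1). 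Using Assumption~(3) and the $k$-sparsity of $w_t\sm{[S]} - w^*$, $|X_t| \leq 2B\sqrt{k}\,\|w_t\sm{[S]} - w^*\|_2$ and the conditional variance is at most $4kB^2\|w_t\sm{[S]} - w^*\|_2^2$. A Freedman-type bound, made uniform over dyadic levels of the random variance proxy $U$, then gives, with probability $1-\delta$, an inequality of the form $\sum_t X_t \leq c_1 B\sqrt{k\,U\,\log(\log_2(2T)/\delta)}$, with a lower-order residual absorbed via the fact that $w_t, w^* \in \sH$.

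Combining the two displays yields $\tfrac{\alpha}{2}U \leq \sum_{t=1}^T (f_t(w_t\sm{[S]}) - f_t(w^*)) + c_1 B\sqrt{kU\log(\log_2(2T)/\delta)}$. Splitting the $\sqrt{U}$ term via AM--GM, i.e.\ $c_1 B\sqrt{kU\log(\cdot)} \leq \tfrac{\alpha}{4}U + c_1^2 kB^2\log(\cdot)/\alpha$, and solving for $U$ produces $U \leq \tfrac{4}{\alpha}\sum_t (f_t(w_t\sm{[S]}) - f_t(w^*)) + \tfrac{9kB^2 \log(\log_2(2T)/\delta)}{4\alpha^2}$, which, substituted into the Cauchy--Schwarz reduction of the second paragraph, gives \eqref{eq:l1_cost}. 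The main obstacle is the martingale step: since the conditional variance scales with the random aggregate $U$, a plain Freedman bound is not sharp enough, and one needs a peeling (or self-normalized) argument over $O(\log T)$ dyadic values of $U$, which is precisely what produces the iterated-log factor $\log(\log_2(2T)/\delta)$ in the final bound.
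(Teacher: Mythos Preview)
Your proposal is correct and follows essentially the same route as the paper: Cauchy--Schwarz on $\sum_t(\lambda_t-\lambda_{t-1})(\|w^*\|_1-\|w_t\sm{[S]}\|_1)$ to reduce to $U=\sum_t\|w_t\sm{[S]}-w^*\|_2^2$, then strong convexity of $\Lstar$ plus a time-uniform self-normalized Azuma bound (dyadic peeling over the random variance) to pass from expected to empirical excess loss, and finally solving the resulting implicit inequality in $U$.

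The only substantive difference is in that last step. The paper packages the implicit bound as $U \le a+\sqrt{bU+c^2}$ and invokes a dedicated algebraic lemma ($S\le a+\sqrt{bS+c^2}\Rightarrow S\le 2a+b+c$), which simultaneously handles the ``floor'' term $2\sigma_2^2$ in the max from the peeling argument. You instead use AM--GM to absorb the $\sqrt{U}$ term. This is perfectly valid, but note two things: (i) your crude bound $|X_t|\le 2B\sqrt{k}\|w_t\sm{[S]}-w^*\|_2$ overstates the martingale range by a factor of two relative to the paper, which only bounds the range of $f_t(w_t\sm{[S]})-f_t(w^*)$ (the conditional mean shift is deterministic), so their $M_t=B\sqrt{k}\|w_t\sm{[S]}-w^*\|_2$; and (ii) AM--GM with the natural split $\tfrac{\alpha}{4}U+\cdots$ yields the additive term $9kB^2\log(\cdot)/\alpha^2$ rather than the claimed $9kB^2\log(\cdot)/(4\alpha^2)$. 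Neither point affects the correctness of the approach or the final $\oop{\cdot}$ rate; they only change constants.
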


Note that Lemma~\ref{lemm:l1_cost} bounds $\Lambda$ in terms of what is essentially 
the square root of $\Regret(w^*)$.
Indeed, if $\supp(w_s) \subseteq S$, so that $w_t\sm{[S]} = w_t$, 
then the sum appearing inside the square-root is exactly $\Regret(w^*)$.
Using this bound, we can provide a recipe for transforming regret bounds for $\LI$-penalized adaptive 
mirror descent algorithms into much stronger excess risk bounds.

\begin{theorem}[cost of sparsity for online prediction error]
\label{theo:l1_recipe}
Under the conditions of Lemma \ref{lemm:l1_cost}, suppose that we have any 
excess risk bound of the form
\begin{equation}
\label{eq:l1_recipe_init}
\Regret(w^*) 
\leq R_T\p{w^*} +  \CostOfSparsity
\end{equation}
for some main term $R_T(w^*) \geq 0$ and $\Lambda$ as defined in 
\eqref{eq:CostOfSparsity2}. 
Then, for regularization schedules of the form $\lambda_t = \lambda\sqrt{t+1}$, 
the following excess risk bound also holds with probability $1 - \delta$ for 
any $\delta > 0$:
\begin{align}
\label{eq:l1_recipe_goal}
&\Regret(w^*) 
 \leq 2 R_T\p{w^*} \\
\notag
&\ \ \ \  + \frac{4k\lambda^2\p{1+\log T}}{3\alpha}
+ \frac{kB^2\log\p{\lg(2T)/\delta}}{2\alpha} + \max\p{0, -\Delta}, \\
\notag
&\ \ \ \ \text{where } \Delta = \sum_{t = 1}^T \p{f_t\p{w_t} - f_t\p{w_t\sm{[S]}}}.
\end{align}
\end{theorem}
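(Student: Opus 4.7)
The starting point is the hypothesized bound
$\Regret(w^*) \leq R_T(w^*) + \Lambda$
combined with the high-probability bound on $\Lambda$ from Lemma~\ref{lemm:l1_cost}. The key algebraic observation is that the sum inside the square root appearing in Lemma~\ref{lemm:l1_cost} can be rewritten in terms of $\Regret(w^*)$ and $\Delta$: by adding and subtracting $\sum_{t} f_t(w_t)$,
\begin{equation*}
\sum_{t=1}^T \bigl(f_t(w_t\sm{[S]}) - f_t(w^*)\bigr) \;=\; \Regret(w^*) \;-\; \Delta.
\end{equation*}
Substituting this identity turns Lemma~\ref{lemm:l1_cost} into $\Lambda \leq C\sqrt{(4/\alpha)(\Regret(w^*) - \Delta) + E}$, where $C = (\lambda/2)\sqrt{k(1+\log T)}$ and $E = 9kB^2\log(\lg(2T)/\delta)/(4\alpha^2)$. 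Writing $G := \Regret(w^*)$, the hypothesis \eqref{eq:l1_recipe_init} thus becomes an implicit, self-referential inequality for $G$.

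Next I would eliminate the dependence on the sign of $\Delta$ by observing that, since the square root is monotone in its argument and $-\Delta \leq \max(0,-\Delta) =: M$, we can upper bound
\begin{equation*}
G \;\leq\; R_T(w^*) + C\sqrt{\tfrac{4}{\alpha}(G + M) + E}.
\end{equation*}
This is the step that explains the appearance of $\max(0,-\Delta)$ in \eqref{eq:l1_recipe_goal}: when $\Delta \geq 0$ the extra term is zero and the bound tightens automatically, while when $\Delta < 0$ we are forced to carry $M = -\Delta$ along.

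Finally I would decouple $G$ from the square root via Young's inequality $C\sqrt{Y} \leq C^2/(2\mu) + \mu Y/2$. Expanding $Y = (4/\alpha)(G+M) + E$ and choosing $\mu$ so that the coefficient of $G$ on the right becomes $\tfrac{1}{2}$ (so $\mu = \alpha/4$), one obtains
\begin{equation*}
\tfrac{1}{2} G \;\leq\; R_T(w^*) + \tfrac{2C^2}{\alpha} + \tfrac{M}{2} + \tfrac{\alpha E}{8},
\end{equation*}
which, after multiplying by two and substituting back the values of $C^2$ and $E$, produces exactly the $2R_T(w^*)$, $O(k\lambda^2(1+\log T)/\alpha)$, $O(kB^2\log(\lg(2T)/\delta)/\alpha)$, and $\max(0,-\Delta)$ terms in \eqref{eq:l1_recipe_goal}.

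The only real subtlety is constant-tracking: different choices of $\mu$ trade off the multiplicative factor in front of $R_T(w^*)$ against the additive $\lambda^2$ and $B^2$ terms, and one must pick $\mu = \alpha/4$ (or a nearby value) to land precisely the factor $2$ on $R_T(w^*)$ while making the other coefficients match the stated $4/3$ and $1/2$. All probabilistic content is inherited from the single $1-\delta$ event of Lemma~\ref{lemm:l1_cost}, so no additional union bound or concentration argument is required; the rest is pure deterministic algebra on the implicit inequality $G \leq A + C\sqrt{aG + b}$.
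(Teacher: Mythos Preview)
Your proposal is correct and follows essentially the same route as the paper: combine the hypothesis with Lemma~\ref{lemm:l1_cost}, use the identity $\sum_t (f_t(w_t\sm{[S]})-f_t(w^*)) = \Regret(w^*)-\Delta$ to make the bound self-referential, and then solve the resulting implicit inequality $G \le A + C\sqrt{aG+b}$.

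The differences are purely algebraic. The paper splits into the cases $\Delta\ge 0$ and $\Delta<0$ and in each case invokes its self-bounding Lemma (``$S\le a+\sqrt{bS+c^2}\Rightarrow S\le 2a+b+c$'') followed by an AM--GM step; you avoid the case split by passing to $M=\max(0,-\Delta)$ up front and then decouple with Young's inequality. Your treatment is arguably cleaner. One small caveat: with $\mu=\alpha/4$ your constants come out as $k\lambda^2(1+\log T)/\alpha$ and $\tfrac{9}{16}\,kB^2\log(\lg(2T)/\delta)/\alpha$, not the paper's $\tfrac{4}{3}$ and $\tfrac{1}{2}$; the former is a bit better and the latter a bit worse, so you cannot literally ``match the stated $4/3$ and $1/2$'' with a single $\mu$, but either set of constants proves the theorem. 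No additional probabilistic input is needed beyond the $1-\delta$ event of Lemma~\ref{lemm:l1_cost}, exactly as you note.
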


Notice that this result does not depend on any form of orthogonal noise
or irrepresentability assumption. Instead, our bound depends implicitly on the assumption
that sparsification improves the performance of our predictor.
Specifically,
$\Delta$
is the excess risk we get from using non-zero weights outside the sparsity set $S$. 
If the non-signal features are pure noise (i.e., independent from the response), then
clearly
$$\EE{f_t(w_t\sm{[S]})} \leq \EE{f_t(w_t)}, $$
and so $\EE{\Delta} \ge 0$ and thus \eqref{eq:l1_recipe_goal} is a strong bound
in the sense that the cost of sparsity grows only as $\log T$.
Conversely,
if there are many good non-sparse models, then $-\Delta$ could potentially be large enough
to render the bound useless.

To use Theorem \ref{theo:l1_recipe} in practice, we will make assumptions
(such as irrepresentability) that
guarantee that $\supp(w_t) \subseteq S$ with high probability for all $t$,
so that $w_t\sm{[S]} = w_t$ and thus $\Delta = 0$.
The following result gives us exactly this guarantee in the case
where the noise features $\neg S$ are orthogonal to the signal $S$ (formalized as Assumption 4),
by letting $\lambda_t$ grow at an appropriate rate.
In Section \ref{sec:irrep}, we relax the orthogonality assumption
to one where the noise features need only be irrepresentable.

\begin{lemma}[support recovery with uncorrelated noise]
\label{lemm:pure_noise}
Suppose that Assumptions 1,  3, and 4 hold.
Then, for any convex functions $\{\zeta_t^{i}\}_{i = 1}^d$, as long as 
$\zeta_t^{i}$ is minimized at $0$ for all $i \not\in S$,
the weights $w_t$ generated by adaptive mirror descent with regularizer
$$ \psi_t\p{w} = \sum_{i = 1}^d \zeta_t^i\p{w^i} +\lambda_t \Norm{w}_1 $$
and
\begin{equation}
\label{eq:pure_noise_lambda}
\lambda_t = c_\delta \sqrt{t} \; \with \; c_\delta = \frac{3B}{2}\sqrt{\log \p{\frac{2d\lg(2T)}{\delta}}}
\end{equation}
will satisfy $\supp(w_t) \subseteq S$ for all $t = 1, \, \dots, \, T$ with probability at least $1 - \delta$.
\end{lemma}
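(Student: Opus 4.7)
The plan is to reduce the claim to a uniform tail bound on the dual coordinates $\theta_t^i$ for $i \notin S$, and then obtain that tail bound by combining Assumption~4, the Azuma--Hoeffding inequality, and a dyadic peeling argument in $t$.

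First, because $\psi_t$ is coordinate-separable, the subgradient optimality condition for the adaptive mirror descent update \eqref{eq:mirror} decouples across coordinates. For $i \notin S$, the fact that $\zeta_t^i$ is minimized at $0$ means $0 \in \partial \zeta_t^i(0)$, and the subdifferential of $\lambda_t |\cdot|$ at $0$ is $[-\lambda_t, \lambda_t]$. Hence $w_t^i = 0$ whenever $|\theta_t^i| \leq \lambda_t$. The lemma therefore reduces to showing that, with probability at least $1-\delta$, $\max_{i \notin S} |\theta_t^i| \leq c_\delta \sqrt{t}$ simultaneously for all $t = 1, \ldots, T$.

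Second, I would establish that, on a suitably chosen good event, $\theta_t^i = \sum_{s=1}^{t-1} \nabla f_s(w_s)_i$ is a martingale with increments bounded by $B$. The martingale property uses Assumption~1 (so $\bE[\nabla f_s(w_s)_i \mid \sF_{s-1}] = \nabla \Lstar(w_s)_i$) combined with Assumption~4 (so that the right-hand side vanishes for $i \notin S$ provided $w_s \in \sH$). The bound on increments comes from Assumption~3. With the martingale structure in hand, for each fixed $i$ I would apply a dyadic peeling argument: partition $\{1, \ldots, T\}$ into intervals $[2^j, 2^{j+1})$ for $j = 0, \ldots, \lfloor \lg T \rfloor$, and apply Doob's maximal inequality together with Azuma on each interval to get
\[\bP\!\left(\sup_{t \leq 2^{j+1}} |\theta_t^i| \geq c_\delta \sqrt{2^j}\right) \leq 2 \exp\!\left(-\frac{c_\delta^2}{c B^2}\right)\]
for an absolute constant $c$. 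Since $\sqrt{t} \geq \sqrt{2^j}$ on the $j$-th interval, this controls $|\theta_t^i|/\sqrt{t}$ pointwise. Taking a union bound over the at most $d$ coordinates $i \notin S$ and the at most $\lg(2T)$ dyadic scales, and then solving for $c_\delta$, recovers the constant $\tfrac{3B}{2}\sqrt{\log(2d\lg(2T)/\delta)}$ stated in \eqref{eq:pure_noise_lambda}.

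The main obstacle is the circularity of the martingale argument: to apply Assumption~4 at step $s$, I need $w_s \in \sH$, i.e.\ both $\supp(w_s) \subseteq S$ (which is what I am trying to prove) and $\|w_s\|_1 \leq R$. I would resolve this by introducing the stopping time $\tau = \min\{t : w_t \notin \sH\}$ and working with the stopped process $\nabla f_s(w_s) \mathbf{1}\{s \leq \tau\}$, which is a bona fide martingale. A straightforward forward induction then shows that, on the event $E$ that the Azuma/peeling bound holds for the stopped process, every $w_t$ with $t \leq T$ automatically satisfies $\supp(w_t) \subseteq S$; the auxiliary bound $\|w_t\|_1 \leq R$ can be maintained because the optimality condition for $w_t$ together with the $L_1$ penalty of size $\lambda_t$ controls the size of the active coordinates by comparison to $w^*$ (which lies in $\sH$ by Assumption~1). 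On $E$, $\tau > T$, so the stopped and original processes coincide and the claimed support recovery holds.
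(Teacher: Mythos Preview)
Your approach is essentially the paper's: reduce the claim to a time-uniform bound on $|\theta_t^j|$ for $j\notin S$, then control the martingale $\theta_t^j$ via Azuma plus dyadic peeling and union-bound over coordinates. The paper packages the peeling step as a standalone adaptive Azuma inequality (Lemma~\ref{lemm:better_azuma}) and simply invokes it with $M_t=B$, $\sigma_1=1$, $\sigma_2=B$, which yields exactly the constant $\tfrac{3B}{2}\sqrt{\log(2d\lg(2T)/\delta)}$ you arrive at.

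The paper's proof is terser than yours and, in particular, does not spell out the circularity you flag. Your stopping-time device is a valid way to make the argument rigorous, but it is more machinery than needed: a plain forward induction on $t$ (if $\supp(w_s)\subseteq S$ for all $s<t$, then the increments up to $t$ are mean-zero and bounded, so $|\theta_t^j|\le\lambda_t$, hence $\supp(w_t)\subseteq S$) suffices and is exactly what the paper uses later in the irrepresentability case (Lemma~\ref{lem:irrep_lambda}). Your last sentence, however, is too hand-wavy: nothing in the lemma's hypotheses on the generic $\zeta_t^i$ forces $\|w_t\|_1\le R$, so ``the optimality condition together with the $L_1$ penalty controls the size of the active coordinates'' is not a proof. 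The paper glosses over this as well; it is really a looseness in how Assumptions~3 and~4 restrict to $\sH$ rather than a defect in the martingale argument, and in the concrete linear-regression instance the orthogonality conditions make $\nabla\Lstar(w)_j=0$ for all $w$ with $\supp(w)\subseteq S$, not just $w\in\sH$.
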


We have thus cleared the main theoretical hurdle identified at the
end of Section \ref{sec:strong_cvx}, by showing that having an $\LI$ penalty that grows
as $\sqrt{t}$ does not necessarily make the regret bound scale with $\sqrt{t}$ also.
Thus, we can now use
Theorem \ref{theo:l1_recipe} in combination with Lemma \ref{lemm:pure_noise} to
get logarithmic bounds on the cost of sparsity $\Lambda$ for strongly convex losses,
as shown below.

\begin{corollary}[synthesis]
\label{coro:strong_cvx_lambda_cost}
Suppose that Assumptions 1, 3, and 4 hold, that we run Algorithm \ref{alg:stream} with
$\varepsilon = 0$ and $\lambda_t = c_\delta \sqrt{t}$ with $c_\delta$ as defined in
\eqref{eq:pure_noise_lambda}. Moreover, suppose that the loss functions $f_t$ are
all $\alpha$-strongly convex for some $\alpha > 0$ and that we set $\eta = \alpha$. Then,
$$ \sum_{t = 1}^T \p{f_t(w_t) - f_t(w^*)} = \oop{\frac{k B^2}{\alpha} \log\p{d \log\p{T}} \log\p{T}}. $$
\proof
This result follows directly by combining Theorem \ref{theo:strong_sparse} with
Theorem \ref{theo:l1_recipe}, while using Lemma \ref{lemm:pure_noise} to control
sparsity.
\endproof
\end{corollary}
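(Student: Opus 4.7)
The plan is to chain three ingredients already established in the paper: Lemma~\ref{lemm:pure_noise} to enforce support recovery, Theorem~\ref{theo:strong_sparse} to obtain a regret decomposition that exploits strong convexity, and Theorem~\ref{theo:l1_recipe} to convert the cost-of-sparsity term from a worst-case $\sqrt{T}$ scaling down to $\log T$ scaling.

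First, I would apply Lemma~\ref{lemm:pure_noise} with the coordinate-wise functions $\zeta_t^i(w^i) = \tfrac{\eta}{2}\sum_{s<t}(w^i - w_s^i)^2$ obtained by splitting the quadratic part of $\psi_t$ from \eqref{eq:psi1} across coordinates. Each $\zeta_t^i$ is convex, and (as the proof of that lemma confirms inductively, since $w_s^i = 0$ for $i\notin S$ on the event being built) is minimized at $0$ whenever $i\notin S$. The choice $\lambda_t = c_\delta\sqrt{t}$ in \eqref{eq:pure_noise_lambda} then guarantees $\supp(w_t)\subseteq S$ for every $t\le T$ with probability at least $1-\delta/2$. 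On this event, the active set satisfies $k_t = |S_t| \le k$ at every step, and $w_t\sm{[S]} = w_t$, so in particular the slack $\Delta$ appearing in Theorem~\ref{theo:l1_recipe} is identically zero.

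Second, I would invoke Theorem~\ref{theo:strong_sparse} with $\eta = \alpha$. Because each $f_t$ is $\alpha$-strongly convex, the quadratic term cancels exactly and we are left with
\begin{equation*}
\Regret(w^*) \leq \MainRegret + \CostOfSparsity,
\end{equation*}
where, thanks to $\epsilon = 0$ and $k_t \leq k$, we have $\MainRegret \leq (kB^2/(2\alpha))(1+\log T)$. Feeding this inequality into Theorem~\ref{theo:l1_recipe} with $R_T(w^*) = \MainRegret$ and $\Delta = 0$ yields
\begin{equation*}
\Regret(w^*) \leq 2\MainRegret + \frac{4k\lambda^2(1+\log T)}{3\alpha} + \frac{kB^2 \log(\lg(2T)/\delta)}{2\alpha}.
\end{equation*}
Substituting $\lambda = c_\delta$ and noting that $c_\delta^2 = \Theta(B^2\log(d\log(T)/\delta))$, the three terms can be absorbed into the advertised rate $\oop{(kB^2/\alpha)\log(d\log T)\log T}$. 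A single union bound over the high-probability events from Lemma~\ref{lemm:pure_noise} and Theorem~\ref{theo:l1_recipe} closes the argument at total probability $1-\delta$.

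The only genuinely delicate bookkeeping is the logical ordering of the two probabilistic events: the inequality $k_t\leq k$ that simplifies $\MainRegret$ is only valid on the sparsity event furnished by Lemma~\ref{lemm:pure_noise}, while Theorem~\ref{theo:l1_recipe} introduces its own concentration event inside the cost-of-sparsity bound. Both events, however, are driven by the same $\sqrt{\log(d\log T)}$-scale tail bounds on partial sums of noise-feature gradients and can be combined at the cost of replacing $\delta$ by $\delta/2$, which is absorbed into the $\mathcal{O}_P$ notation. Beyond this point, the remainder of the argument is a routine algebraic consolidation of the $\log d$, $\log\log T$, and $\log T$ factors, and I do not anticipate any further obstacles.
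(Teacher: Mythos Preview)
Your proposal is correct and follows exactly the same route as the paper's proof: invoke Lemma~\ref{lemm:pure_noise} to ensure $\supp(w_t)\subseteq S$ (hence $k_t\le k$ and $\Delta=0$), apply Theorem~\ref{theo:strong_sparse} to get $\Regret(w^*)\le\MainRegret+\CostOfSparsity$, and then use Theorem~\ref{theo:l1_recipe} to upgrade this into the desired logarithmic bound. The paper's proof is just the one-line ``combine these three results,'' and your writeup is a faithful and more detailed expansion of that.
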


\section{Online Learning with Strong Convexity in Expectation}
\label{sec:expected_cvx}

Thus far, we have obtained our desired regret bound of $\oop{k \log(d \log(T)) \log(T)}$,
but assuming that each loss function $f_t$ was $\alpha$-strong convexity
(Corollary \ref{coro:strong_cvx_lambda_cost}).
This strong convexity assumption, however, is unrealistic for many commonly-used loss functions.
For example, in the case of linear regression with $f_t(w) = \frac12 (y_t - w^\top x_t)^2$,
the individual loss functions $f_t$ are not strongly convex. However, we do know that the $f_t$
are strongly convex in expectation as long as the covariance of $x$ is non-singular.  
In this section, we show that
this weaker assumption of strong convexity in expectation is all that is needed to obtain the
same rates as before.

The adaptive mirror descent bounds presented in Sections \ref{sec:main} and
\ref{sec:sparse} all depend on the following inequalities: if the loss function
$f_t(\cdot)$ is convex, then
\begin{equation}
\label{eq:cvx}
f_t\p{w_t} - f_t\p{u} \leq \p{w_t - u}^\top\nabla f_t \p{w_t} ,
\end{equation}
and if $f_t(\cdot)$ is $\alpha$-strongly convex, then
\begin{equation}
\label{eq:cvx_strong}
f_t\p{w_t} - f_t\p{u} \leq \p{w_t - u}^\top\nabla f_t \p{w_t}  - \frac{\alpha}{2}\|w_t-u\|_2^2.
\end{equation}
It turns out that we can use similar arguments even when the losses
$f_t(\cdot)$ are not convex, provided 
that $f_t(\cdot)$ is convex \emph{in expectation}. The following lemma is the key technical device allowing us to do so.
Comparing \eqref{eq:expected_cvx} with \eqref{eq:cvx_strong},
notice that we only lose a factor of $2$ in terms of $\alpha$ and pick up an
additive constant for the high probability guarantee.

\begin{lemma}[online prediction error with expected strong convexity]
\label{lemm:expected_cvx}
Let $f_1, \, \ldots, \, f_T$ be a sequence of (not necessarily convex) loss functions defined over a 
convex region $\sH$ and let $u, \, w_1, \, \ldots, \, w_T \in \sH$. Finally let 
$\sF_0,\sF_1,\ldots$ be a filtration such that:
\begin{enumerate}
\item $w_t$ is $\sF_{t-1}$-measurable, and $u$ is $\sF_0$-measurable,
\item $f_t$ is $\sF_t$-measurable and $\EE{f_t \mid \sF_{t-1}}$ is 
      $\alpha$-strongly convex with respect to some norm $\Norm{\cdot}$, and
\item $f_t$ is almost surely $L$-Lipschitz with respect to $\Norm{\cdot}$ over all of $\sH$.
\end{enumerate}
Then, with probability at least $1-\delta$, we have, for all $T \geq 0$,
\begin{align}
\label{eq:expected_cvx}
\Regret(u)
&\leq
\sum_{t=1}^T \p{\p{w_t - u}^\top\nabla f_t \p{w_t} - \frac{\alpha}{4}\|w_t - u \|^2} +
\frac{8L^2\log(1/\delta)}{\alpha}.
\end{align}
\end{lemma}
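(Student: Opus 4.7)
The plan is to reduce the claim to a martingale concentration argument. Define the per-step regret gap
$$Y_t \eqdef f_t(w_t) - f_t(u) - (w_t - u)^\top \nabla f_t(w_t),$$
so that \eqref{eq:expected_cvx} becomes the statement that $\sum_{t=1}^T Y_t \le -\tfrac{\alpha}{4}\sum_{t=1}^T \|w_t - u\|^2 + \tfrac{8L^2 \log(1/\delta)}{\alpha}$ holds simultaneously for all $T \ge 0$ with probability at least $1 - \delta$.

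First I would control the conditional expectation of $Y_t$. Let $g_t \eqdef \EE{f_t \mid \sF_{t-1}}$. Because $w_t$ and $u$ are $\sF_{t-1}$-measurable, a dominated-convergence argument (using the bounded subgradients implicit in assumption~3) justifies interchanging $\nabla$ and $\EE$, so $\EE{\nabla f_t(w_t) \mid \sF_{t-1}} = \nabla g_t(w_t)$. The standard $\alpha$-strong convexity inequality applied to $g_t$ at $w_t$ with comparator $u$ then gives
$$\EE{Y_t \mid \sF_{t-1}} \;=\; g_t(w_t) - g_t(u) - (w_t - u)^\top \nabla g_t(w_t) \;\le\; -\tfrac{\alpha}{2}\|w_t - u\|^2.$$

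Next I would build a supermartingale out of the fluctuations $M_t \eqdef Y_t - \EE{Y_t \mid \sF_{t-1}}$. By Lipschitzness, $|f_t(w_t) - f_t(u)| \le L\|w_t - u\|$ and $\|\nabla f_t(w_t)\|_* \le L$, so $|Y_t| \le 2L\|w_t-u\|$ and hence $|M_t| \le 4L\|w_t-u\|$ almost surely. Since $\|w_t - u\|$ is predictable, Hoeffding's lemma applied conditionally yields $\EE{e^{\lambda M_t} \mid \sF_{t-1}} \le \exp(2\lambda^2 L^2 \|w_t - u\|^2)$ for every $\lambda \in \RR$. Choosing $\lambda \eqdef \alpha/(8L^2)$, this shows that
$$S_T \eqdef \exp\p{\lambda \sum_{t=1}^T M_t - 2\lambda^2 L^2 \sum_{t=1}^T \|w_t - u\|^2}$$
is a nonnegative supermartingale with $\EE{S_0} = 1$. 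Ville's maximal inequality then gives $\bP(\sup_{T \ge 0} S_T \ge 1/\delta) \le \delta$, so with probability at least $1-\delta$, for every $T \ge 0$, $\sum_{t=1}^T M_t \le \tfrac{\alpha}{4}\sum_{t=1}^T \|w_t - u\|^2 + \tfrac{8L^2 \log(1/\delta)}{\alpha}$. Adding this to the deterministic bound $\sum_t Y_t \le \sum_t M_t - \tfrac{\alpha}{2}\sum_t \|w_t - u\|^2$ from the previous step yields \eqref{eq:expected_cvx}.

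The principal technical subtlety is upgrading a fixed-$T$ concentration bound to one that is uniform in $T \ge 0$: a naive union bound over $T$ would introduce a stray $\log T$ factor, whereas the supermartingale argument with Ville's inequality avoids this and produces the $T$-independent constant $8L^2/\alpha$ in front of $\log(1/\delta)$. The interchange of $\nabla$ and $\EE$ in the first step is routine given the bounded-gradient assumption, and the choice $\lambda = \alpha/(8L^2)$ is precisely the one that balances the $\lambda^2 L^2 \|w_t-u\|^2$ concentration penalty against the $\alpha\|w_t-u\|^2$ slack harvested from strong convexity.
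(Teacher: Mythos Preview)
Your proof is correct and follows essentially the same route as the paper's: both use Hoeffding's lemma on the bounded increments (width $4L\|w_t-u\|$), build an exponential supermartingale with the specific choice $\lambda = \alpha/(8L^2)$, and invoke a maximal inequality (you call it Ville's, the paper phrases it as optional stopping plus Markov) to get the uniform-in-$T$ bound. The only cosmetic difference is packaging: the paper folds the $\tfrac{\alpha}{4}\|w_t-u\|^2$ term directly into its increment $D_t$ and uses the resulting negative drift to make $\exp(\lambda X_t)$ itself a supermartingale, whereas you separate the centered fluctuation $M_t$ from the drift and reabsorb the $\tfrac{\alpha}{4}\|w_t-u\|^2$ penalty at the end---but the arithmetic and constants are identical.
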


We can directly use this lemma to get an extension of the adaptive mirror
descent bound of \citet{orabona2013general} for loss functions that are only
convex in expectation, thus yielding
an analogue of Theorem \ref{theo:strong_sparse},
that only requires expected strong convexity instead of
strong convexity. Note that the above result holds for any fixed $u$, although 
we will always invoke it for $u = w^*$.

\begin{theorem}[simplification with expected strong convexity]
\label{theo:sparse_expected_cvx}
Suppose that the $f_t(\cdot)$ are a sequence of loss functions satisfying Assumptions (1-4), 
and that we run adaptive mirror descent with the regularizers $\psi_t$ from \eqref{eq:psi-def}
and $\varepsilon = 0$. 
Then, assuming that\, $\supp(w_t) \subseteq S$ for all $t$,
we have that for any $\delta > 0$, with probability at least $1 - \delta$,
\begin{align}
\label{eq:sparse_expected_cvx_simple}
\Regret(w^*)
\leq \frac{kB^2}{\alpha}\p{{1 + \log T} + 8\log\p{\frac{1}{\delta}}}
+ \CostOfSparsity,
\end{align}
where $\Lambda$ is the cost of sparsity as defined in \eqref{eq:CostOfSparsity}.
\end{theorem}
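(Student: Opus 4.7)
The plan is to combine three ingredients already established: the decomposition bound of Lemma \ref{lemm:sparse_amd}, the expected strong convexity inequality of Lemma \ref{lemm:expected_cvx}, and the hypothesis $\supp(w_t) \subseteq S$ which collapses the effective dimension to $k$. The choice of parameters $\eta = \alpha/2$ in Algorithm \ref{alg:stream} is specifically tailored so that the residual quadratic term produced by Lemma \ref{lemm:sparse_amd} cancels the strong convexity correction from Lemma \ref{lemm:expected_cvx}.

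First I would set $u = w^*$ in Lemma \ref{lemm:expected_cvx}. To apply it, I need to identify the norm and associated Lipschitz constant. Since $\supp(w_t) \subseteq S$ for every $t$ (by hypothesis) and $\supp(w^*) = S$, the entire trajectory, as well as the inner products $(w_t - w^*)^\top \nabla f_t(w_t)$, lives effectively in the $k$-dimensional subspace indexed by $S$. Restricting $\sH$ to this subspace, assumption (3) gives $\|\nabla f_t(w)\|_\infty \leq B$, hence $f_t$ is $B\sqrt{k}$-Lipschitz with respect to $\|\cdot\|_2$ on $\{w : \supp(w) \subseteq S\}$; and assumption (2) gives $\alpha$-strong convexity of $\EE{f_t \mid \sF_{t-1}} = \Lstar$ in $\|\cdot\|_2$ on the same set. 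Plugging $L = B\sqrt{k}$ into Lemma \ref{lemm:expected_cvx} yields, with probability at least $1 - \delta$,
\[
\Regret(w^*) \leq \sum_{t=1}^T \left[ (w_t - w^*)^\top \nabla f_t(w_t) - \frac{\alpha}{4}\|w_t - w^*\|_2^2 \right] + \frac{8 k B^2 \log(1/\delta)}{\alpha}.
\]

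Next, I apply Lemma \ref{lemm:sparse_amd} with $u = w^*$ and $\eta = \alpha/2$ to bound the linearized sum by $\MainRegret + \CostOfSparsity + \Quadratic$. Because $\Quadratic = \frac{\eta}{2}\sum_t \|w_t - w^*\|_2^2 = \frac{\alpha}{4}\sum_t \|w_t - w^*\|_2^2$, it cancels exactly the strong-convexity correction produced in the previous step. What remains is $\MainRegret + \CostOfSparsity + \frac{8kB^2\log(1/\delta)}{\alpha}$.

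Finally, I control $\MainRegret = \frac{\epsilon}{2}\|w^*\|_2^2 + \frac{B^2}{2}\sum_{t=1}^T \frac{k_t}{\epsilon + \eta t}$. With $\varepsilon = 0$ and $\eta = \alpha/2$, and using that $\supp(w_t) \subseteq S$ for all $t$ implies $k_t = |S_t| \leq |S| = k$, a standard harmonic sum bound gives
\[
\MainRegret \leq \frac{B^2}{\alpha}\sum_{t=1}^T \frac{k}{t} \leq \frac{kB^2}{\alpha}(1 + \log T).
\]
Adding this to the leftover constant $\frac{8kB^2\log(1/\delta)}{\alpha}$ and the cost-of-sparsity term $\CostOfSparsity$ produces the claimed bound. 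The only subtle point, and the step most likely to trip up a naive attempt, is matching norms so that both $L$ and $\alpha$ refer to the Euclidean norm on the correct $k$-dimensional subspace; everything else is bookkeeping.
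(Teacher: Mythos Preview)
Your proposal is correct and follows essentially the same approach as the paper: apply Lemma~\ref{lemm:expected_cvx} with $L = B\sqrt{k}$ (obtained from the $\|\cdot\|_\infty$ gradient bound restricted to the $k$-dimensional subspace $S$), combine with Lemma~\ref{lemm:sparse_amd} at $\eta = \alpha/2$ so that the $\frac{\alpha}{4}\sum_t\|w_t-w^*\|_2^2$ terms cancel, and bound $\MainRegret$ via $k_t \le k$ and the harmonic sum. The paper's proof is identical in substance, differing only in the order in which the two lemmas are invoked.
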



We have now assembled all the necessary ingredients to establish our first main result,
namely the excess empirical risk bound for Algorithm \ref{alg:stream} given in Theorem \ref{thm:main-risk},
which states that
\begin{align*}
\Regret(w^*)
 = \oop{\frac{kB^2}{\alpha}\log\p{{d\log(T)}}\log(T)}.
\end{align*}
The proof, provided at the end of Section \ref{sec:apx_expected_cvx}, follows directly from
combining Theorem \ref{theo:l1_recipe}, Lemma \ref{lemm:pure_noise}, and 
Theorem \ref{theo:sparse_expected_cvx}.

We pause here to discuss what we have done so far. At the beginning of Section \ref{sec:sparse},
we set out to provide an excess loss bound for the sparsified stochastic gradient method described in
Algorithm \ref{alg:stream}. The main difficulty was that although $\LI$-induced sparsity enabled us
to control the size of the main term $\MainRegret$ from \eqref{eq:MainRegret}, it induced another
cost-of-sparsity term $\CostOfSparsity$ \eqref{eq:CostOfSparsity} that seemingly grew as $\sqrt{T}$.
However, through the more careful implicit analysis presented in Section \ref{sec:statistical}, we were
able to show that, if $\MainRegret$ satisfies logarithmic bounds in $d$ and $T$, then $\CostOfSparsity$
must also satisfy similar bounds. In parallel, we showed in Section \ref{sec:expected_cvx} how to work with
expected strong convexity instead of actual strong convexity.

The ideas discussed so far, especially in Section \ref{sec:statistical}, comprise the main technical
contributions of this paper. In the remaining pages, we extend the scope of our analysis, by providing
an analogue to Theorem \ref{thm:main-risk} that lets us control parameter error at a quasi-optimal rate,
and by extending our analysis to designs with correlated noise features.

\section{Parameter Estimation using Online-to-Batch Conversion}
\label{sec:batch}

In the previous sections, we focused on bounding the cumulative excess loss made
by our algorithm while streaming over the data, namely $\sum_{t = 1}^T (f_t(w_t) - f_t(w^*))$.
In many cases, however, a statistician may be more interested in estimating the underlying weight
vector $w^*$ than in just obtaining good predictions. In this section, we show how to adapt the
machinery from the previous section for parameter estimation.

The key idea is as follows. Assume that the $f_t$ are \emph{i.i.d.} and recall the expected risk 
is defined as $\law\p{w} = \EE{f_t \p{w}}$. If we know that $\law\p{\cdot}$ is $\alpha$-strongly
convex, we immediately see that, for any $w$,
\begin{equation}
\label{eq:param_bound}
\frac12 \Norm{w - w^*}_2^2 \leq \frac{1}{\alpha} \p{\law\p{w} - \law\p{w^*}}.
\end{equation}
Thus, given a guess $\hw_t$, we can transform any generalization error bound on
$\law\p{\hw_t} - \law\p{w^*}$ into a parameter error bound for $\hw_t$.

The standard way to turn cumulative online loss bounds into generalization bounds
is using ``online-to-batch'' conversion \citep{cesa2004generalization,kakade2009generalization}.
In general, online-to-batch type results tell us that if we start with a bound of the
form\footnote{In Proposition \ref{prop:batch}, we provide one bound of this form that
is useful for our purposes.}
$$ \Regret(w^*) = \sum_{t = 1}^T \p{f_t(w_t) - f_t(w^*)} = \oop{Q(T)}$$
for some function $Q(T)$, then
$$ \law\p{\hat w_T} - \law\p{w^*} = \oop{\frac{Q(T)}{T}}, \; \text{with} \; \hw_T = \frac{1}{T} \sum_{t = 1}^T w_t. $$
The problem with this approach is that, if we applied online-to-batch conversion directly to
Algorithm \ref{alg:stream} and Theorem \ref{thm:main-risk}, we would get a bound of the form
$$ \law\p{\hat w_T} - \law\p{w^*} = \oop{\frac{k\log (d\log(T))}{T} \, \log(T)}, $$
which is loose by a factor $\log T$ with respect to the minimax rate \citep{raskutti2011minimax}.
At a high level, the reason we incur this extra $\log T$ factor is that the required averaging
step $\hw_T = \frac{1}{T} \sum_{t = 1}^T w_t$ gives too much weight to the small-$t$ weights
$w_t$, which may be quite far from $w^*$.

In this section, however, we will show that if we modify Algorithm~\ref{alg:stream} slightly, yielding
Algorithm~\ref{alg:batch}, we can discard the extra $\log T$ factor and
obtain our desired generalization error rate bound of $\oop{{k\log (d\log(T))} / {T}}$.
Besides being of direct interest for parameter estimation,
this technical result will prove to be important in dealing with correlated noise
features under irrepresentability conditions. 

To achieve the desired batch bounds, we modify our algorithm as follows:
\begin{itemize}
\item We replace the loss functions $f_t(w)$ with $\tf_t(w) = t f_t(w)$.
\item We replace the regularizer $\psi_t(w)$ with 
\begin{equation}
\label{eq:batch_reg}
\tpsi_t\p{w} = \frac{1}{2\eta}\p{\sum_{s = 1}^{t} s \Norm{w - w_s}_2^2} + \lambda_t \Norm{w}_1.
\end{equation}
\item We use a correspondingly larger $\LI$ regularizer $\lambda_t = \lambda \cdot t^{3/2}$.
\end{itemize}
Procedurally, this new method yields Algorithm \ref{alg:batch}.
Intuitively, the new algorithm pays more attention to later loss functions and weight vectors
compared to earlier ones.

This construction will allow us to give bounds for
\begin{equation}
\label{eq:batch_goal}
\frac{1}{T} \sum_{t = 1}^T t\p{f_t\p{w_t} - f_t\p{w^*}} \; \text{instead of} \; \sum_{t = 1}^T \p{f_t\p{w_t} - f_t\p{w^*}}.
\end{equation}
It turns out that while the latter is only bounded by $\oop{\log(T)}$, the former
is bounded by $\oop{1}$.
This is useful for proving generalization bounds, as shown by the following online-to-batch
conversion result, the proof of which relies on martingale tail bounds
similar to those developed by \citet{freedman1975tail} and
\citet{kakade2009generalization}.
Note that the weight averaging scheme used in Algorithm
\ref{alg:batch} gives us exactly
$$ \hw_t = \frac{2}{t(t+1)}\sum_{s=1}^t s \, w_s; $$
this equality can be verified by induction.

\begin{proposition}[online-to-batch conversion]
\label{prop:batch}
Suppose that, for any $\delta > 0$, with probability $1 - \delta$,
$$ \sum_{s = 1}^t s\p{f_s\p{w_s} - f_s\p{w^*}} \leq R_\delta \, t $$ 
for all $t \leq T$,
and that each $f_t$ is $L$-Lipschitz and $\alpha$-strongly convex over $\sH$. 
Then, with probability at least $1 - 2\delta$,
\begin{align}
\label{eq:conversion}
\law\p{\hat w_t} - \law\p{w^*} 
&\leq \frac{4R_\delta}{t} + \frac{9\log\p{\lg(2T^3)/\delta}L^2}{\alpha t}
\end{align}
for all $t \leq T$.
\end{proposition}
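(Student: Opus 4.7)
My plan is to pass from the cumulative online bound to a batch bound on $\Lstar(\hat w_t)$ via Jensen's inequality plus a self-bounding martingale concentration argument.

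First, by convexity of $\Lstar$ (inherited from its $\alpha$-strong convexity) applied to the weighted average $\hat w_t = \frac{2}{t(t+1)}\sum_{s=1}^t s \, w_s$, one obtains
\begin{equation*}
\Lstar(\hat w_t) - \Lstar(w^*) \;\leq\; \frac{2}{t(t+1)}\, A_t, \qquad A_t \eqdef \sum_{s=1}^t s\p{\Lstar(w_s) - \Lstar(w^*)}.
\end{equation*}
So the task reduces to showing that $A_t \lesssim R_\delta\, t + \frac{L^2 t}{\alpha}\log\p{\lg(2T^3)/\delta}$ uniformly in $t \leq T$; dividing by $t(t+1)/2$ then yields the stated $4R_\delta/t + 9 L^2 \log(\cdot)/(\alpha t)$ rate.

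Next I would decompose $A_t = B_t + M_t$, where $B_t \eqdef \sum_{s=1}^t s\p{f_s(w_s) - f_s(w^*)} \leq R_\delta\, t$ on the $1-\delta$ event in the hypothesis, and $M_t \eqdef \sum_{s=1}^t s\, D_s$ with $D_s \eqdef \p{\Lstar(w_s) - f_s(w_s)} - \p{\Lstar(w^*) - f_s(w^*)}$. Since $w_s$ is $\sF_{s-1}$-measurable and $\bE[f_s \mid \sF_{s-1}] = \Lstar$, the sequence $\{s D_s\}$ is a martingale difference sequence adapted to $\{\sF_s\}$, and all that remains is a concentration bound for $M_t$. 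The essential step is a self-bounding estimate: $L$-Lipschitzness of $f_s$ and $\Lstar$ yields $|D_s| \leq 2L\|w_s - w^*\|$, and $\alpha$-strong convexity of $\Lstar$ yields $\|w_s - w^*\|^2 \leq (2/\alpha)\p{\Lstar(w_s) - \Lstar(w^*)}$; combining and using $s^2 \leq s t$ gives the predictable quadratic variation bound
\begin{equation*}
\sum_{s=1}^t \bE\bigl[(s D_s)^2 \,\big|\, \sF_{s-1}\bigr] \;\leq\; \frac{8 L^2}{\alpha}\sum_{s=1}^t s^2\p{\Lstar(w_s) - \Lstar(w^*)} \;\leq\; \frac{8 L^2 t}{\alpha}\, A_t.
\end{equation*}

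I would then apply a Freedman-type martingale tail inequality to $M_t$, stitched over a dyadic grid $t \in [2^j, 2^{j+1})$; this is what produces the $\lg(2T^3)$ factor inside the logarithm, since one union-bounds over $O(\log T)$ time-scales and also needs extra slack to handle the random increment envelope. The output is an estimate of the form
\begin{equation*}
M_t \;\lesssim\; \sqrt{\tfrac{L^2 t}{\alpha}\, A_t \, \log\p{\lg(2T^3)/\delta}} \;+\; \tfrac{L^2 t}{\alpha}\log\p{\lg(2T^3)/\delta}
\end{equation*}
valid uniformly in $t$ on a $1-\delta$ event. Plugging into $A_t \leq R_\delta t + M_t$ and applying AM--GM in the form $\sqrt{A_t \cdot c} \leq \tfrac12 A_t + \tfrac12 c$ absorbs half of $A_t$ onto the left-hand side, leaving a linear inequality that solves to $A_t \lesssim R_\delta t + \frac{L^2 t}{\alpha}\log\p{\lg(2T^3)/\delta}$. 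Union-bounding the $B_t$ event and the $M_t$ event gives the claimed $1-2\delta$ probability; tuning the AM--GM constants recovers the stated prefactors $4$ and $9$.

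The main obstacle will be the self-bounding Freedman step, since both the quadratic variation $V_t = \sum_{s\leq t}\bE[(s D_s)^2 \mid \sF_{s-1}]$ and the a.s.\ increment bound $\max_{s\leq t}|s D_s|$ are random and themselves depend on the unknown $A_t$. This forces a Freedman/Bernstein inequality with random predictable envelopes (in the spirit of Freedman (1975) and Kakade--Tewari (2009)), followed by a careful algebraic inversion to solve for $A_t$. Getting the dyadic stitching to produce the clean $\lg(2T^3)$ factor and tuning the AM--GM constants to land on the stated prefactors is the most bookkeeping-intensive part, but otherwise routine.
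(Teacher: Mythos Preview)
Your proposal is correct and follows essentially the same route as the paper: reduce to bounding $A_t = \sum_{s\le t} s(\Lstar(w_s)-\Lstar(w^*))$ via Jensen, control the martingale difference $A_t - B_t$ using Lipschitzness to bound increments by $sL\|w_s-w^*\|$, convert $\|w_s-w^*\|^2$ back into $\Lstar(w_s)-\Lstar(w^*)$ via strong convexity, and solve the resulting self-referential inequality. The only differences are in packaging: the paper invokes its own adaptive Azuma inequality (Lemma~\ref{lemm:better_azuma}, which already has the dyadic stitching and the $\max(2\sigma_2^2,\cdot)$ guard built in, and is what produces the $\lg(2T^3)$ factor) rather than Freedman directly, and it solves the implicit inequality with the algebraic Lemma~\ref{lemm:self-bound} ($S \le a + \sqrt{bS+c^2} \Rightarrow S \le 2a+b+c$) rather than AM--GM, which is how the exact constants $4$ and $9$ fall out.
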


Given these ideas, we can mimic our analysis from Sections \ref{sec:sparse}, \ref{sec:statistical}
and \ref{sec:expected_cvx} to provide bounds of the form \eqref{eq:batch_goal} for
Algorithm \ref{alg:batch}. Combined with Proposition \ref{prop:batch}, this will
result in the desired generalization bound. For conciseness, we defer this argument
to the Appendix, and only present the final bound below.

\begin{theorem}[expected prediction error with uncorrelated noise]
\label{theo:main_batch}
Suppose that we run
Algorithm \ref{alg:batch}
with $\eta = \alpha/2$ and
$$\lambda = \frac{3B}{2}\sqrt{\log \p{\frac{2d\lg(2T^3)}{\delta}}}, $$ 
and that Assumptions 1-4 hold. 
Then, with probability $1-3\delta$, for all $t \leq T$ we have 
\begin{equation}
\label{eq:main_batch}
\Lstar\p{\hat w_t} - \Lstar(w^*) \leq \frac{269B^2k\log\p{2d\lg(2T^3)/\delta}}{\alpha t},
\end{equation}
and hence
\begin{equation}
\label{eq:main_batch_param}
\frac{1}{2}\Norm{\hat w_t - w^*}_2^2 \leq \frac{269B^2k\log\p{2d\lg(2T^3)/\delta}}{\alpha^2 t}.
\end{equation}
\end{theorem}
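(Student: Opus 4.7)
The plan is to follow the high-level strategy of Theorem~\ref{thm:main-risk}, applied to the weighted losses $\widetilde f_t(w) = t f_t(w)$ and weighted regularizer $\widetilde\psi_t$ from \eqref{eq:batch_reg}, then convert the resulting cumulative bound via Proposition~\ref{prop:batch} to obtain \eqref{eq:main_batch}, and finally use Assumption~2 to extract \eqref{eq:main_batch_param}. Concretely, I would first apply the sparse adaptive mirror descent decomposition of Lemma~\ref{lemm:sparse_amd} with $\widetilde\psi_t$ in place of $\psi_t$ to get
$$\sum_{t=1}^T (w_t - w^*)^\top \nabla \widetilde f_t(w_t) \leq \widetilde\Omega + \widetilde\Lambda + \widetilde Q.$$
Because $\widetilde\psi_t$ is $\Theta(\eta t^2)$-strongly convex while $\|\nabla \widetilde f_t(w_t)\|_\infty \leq tB$ on the effective support, each term of $\widetilde\Omega$ is of order $k_t B^2/\eta$, so $\widetilde\Omega = O(T k B^2/\alpha)$ once $k_t \leq k$ is controlled.

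Next, I need $\supp(w_t) \subseteq S$ for all $t \leq T$. Under Assumption~4 the coordinates $(\theta_t)_j$ for $j \notin S$ are martingales with quadratic variation $\Theta(t^3)$, so a peeling union bound across dyadic time scales gives $|(\theta_t)_j| \lesssim t^{3/2}\sqrt{\log(d \log T/\delta)}$ simultaneously, matching the schedule $\lambda_t = \lambda t^{3/2}$; this is the weighted analog of Lemma~\ref{lemm:pure_noise}. I would then adapt Lemma~\ref{lemm:l1_cost} and Theorem~\ref{theo:l1_recipe} to the weighted increments $\lambda_t - \lambda_{t-1} \asymp \lambda t^{1/2}$, where the same implicit-bootstrapping argument yields $\widetilde\Lambda = O(T k B^2 \log(d \log T/\delta)/\alpha)$. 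The quadratic term $\widetilde Q = (\eta/2)\sum_t t \|w_t - w^*\|_2^2$ is absorbed by a weighted analog of Lemma~\ref{lemm:expected_cvx}: expected $\alpha$-strong convexity of $\Lstar$ lets me replace the linearized regret by $\sum_t t(f_t(w_t) - f_t(w^*)) + (\alpha/4)\sum_t t\|w_t - w^*\|_2^2$ up to an $O(T B^2/\alpha)$ Freedman-type residual, and the quadratic piece cancels when $\eta \leq \alpha/2$.

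Combining these steps gives $\sum_{s \leq t} s(f_s(w_s) - f_s(w^*)) \leq R_\delta \cdot t$ uniformly in $t \leq T$ with high probability, where $R_\delta = O(B^2 k \log(d \log T/\delta)/\alpha)$. Proposition~\ref{prop:batch} then produces \eqref{eq:main_batch}, and \eqref{eq:main_batch_param} follows from Assumption~2 together with the event $\supp(\hat w_t) \subseteq S$ --- inherited from $\supp(w_s) \subseteq S$ via the convex averaging step $\hat w_t = \tfrac{2}{t(t+1)} \sum_s s w_s$ --- which combined give $\tfrac12 \|\hat w_t - w^*\|_2^2 \leq \tfrac{1}{\alpha}(\Lstar(\hat w_t) - \Lstar(w^*))$.

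The main obstacle is the bookkeeping for the weighted analogs of Lemmas~\ref{lemm:l1_cost}, \ref{lemm:pure_noise}, and \ref{lemm:expected_cvx}. Each originally rests on a Freedman-style martingale inequality calibrated to the unweighted increments; under the $t$-weighting, the variances and the union bounds across dyadic time scales both change, and verifying that the resulting constants remain logarithmic in $d$ and $T$ rather than polynomial in $T$ requires care. Once those weighted concentration bounds are in place, however, the remaining algebra closely mirrors the proof of Theorem~\ref{thm:main-risk}.
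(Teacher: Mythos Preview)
Your proposal is correct and follows essentially the same approach as the paper: the paper likewise establishes weighted analogs of the sparse mirror-descent decomposition, the support-recovery lemma, the cost-of-sparsity recipe, and the expected-strong-convexity lemma (stated there as Theorems~\ref{theo:sparse_expected_cvx_batch} and~\ref{theo:l1_recipe_batch} and Lemma~\ref{lemm:variance-bound-batch}), combines them to get $\sum_{s\le t} s(f_s(w_s)-f_s(w^*))\le R_\delta\, t$, and then invokes Proposition~\ref{prop:batch}. Your identification of the main obstacle---re-calibrating the Freedman-style bounds and the dyadic union bounds to the $t$-weighted increments---is exactly where the paper's additional work lies; one small omission is that the Freedman residual in the weighted expected-convexity step carries a factor of $k$ (the Lipschitz constant over $\sH$ is $B\sqrt{k}$), but this does not affect the overall structure.
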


With this result in hand, we can obtain
Theorem \ref{thm:main-param} as a direct corollary of Theorem \ref{theo:main_batch}.
Thus, as desired, we have obtained a parameter
error bound that decays as $\oop{k \log\p{d \log (T)} / T}$. As discussed earlier, this is
minimax optimal \citep{raskutti2011minimax} as long as the problem is not extremely low-dimensional (the bound
becomes loose only if $T \gg \exp(d)$).

\section{Streaming Sparse Regression with Irrepresentable Features}
\label{sec:irrep}

Finally, we end our analysis by re-visiting probably the most problematic of our
original 4 assumptions from Section \ref{sec:setup}, namely that the gradients
corresponding to noise features are all mean zero for any weight vector $w$
with support in $S$. Here, we show that this assumption is in fact not needed
in its full strength. In particular, in the case of streaming linear regression, a weaker
irrepresentability condition (Assumption \ref{assu:irrep_noise}) is sufficient to guarantee
good performance of our algorithm.

In our original analysis, mean-zero gradients (Assumption 4)
allowed us to guarantee that our $\LI$ penalization scheme would in fact result in sparse
weights, as in Lemma \ref{lemm:pure_noise}. Below, we provide an analogous
result in the case of irrepresentable noise features.

\begin{lemma}[support recovery with irrepresentability]
\label{lem:irrep_lambda}
Suppose that Assumptions 1-3 and 5 hold, and that we run
Algorithm \ref{alg:batch} with $\eta = \alpha/2$ and
\begin{equation}
\label{eq:irrep_lambda}
\lambda \geq \sqrt{\frac{228B^2\log\p{{2d\lg(2T^3)} \big/ {\delta}}}{1-24\rho^2}}.
\end{equation}
Then, with probability at least $1 - 4\delta$, $\supp(w_t) \subseteq S$ for all
$t = 1, \, \dots, \, T$.
\end{lemma}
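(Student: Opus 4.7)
\textbf{Proof proposal for Lemma \ref{lem:irrep_lambda}.}

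The plan is to adapt the support-recovery argument of Lemma \ref{lemm:pure_noise} via a bootstrapping/stopping-time device that resolves the circular dependence between support recovery and parameter error. Define the event $\sE_t = \{\supp(w_s) \subseteq S \text{ for all } s \leq t\}$, and let $\tau = \min\{t : \sE_t \text{ fails}\} \wedge T$. On $\{\tau > t\}$, we have $w_s\sm{[\neg S]} = 0$ for $s \leq t$, so for any $i \notin S$ the dual variable in Algorithm \ref{alg:batch} simplifies to
\[
\theta_t^i \;=\; -\sum_{s=1}^{t-1} s\,\nabla f_s(w_s)_i.
\]
To certify that $\supp(w_{t+1}) \subseteq S$, it suffices to show $|\theta_{t+1}^i| < \lambda_{t+1} = \lambda (t+1)^{3/2}$ for all $i \notin S$, so the whole task reduces to controlling $\theta_t^i$ uniformly in $t \leq T$ and $i \notin S$.

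The first step is to split each increment into a martingale part and a drift: write $\nabla f_s(w_s)_i = M_s^{(i)} + D_s^{(i)}$ where $D_s^{(i)} = \EE{\nabla f_s(w_s)_i \mid \sF_{s-1}}$. For linear regression $\nabla f_s(w_s) = (w_s - w^*)^\top x_s \cdot x_s - \varepsilon_s x_s$, so $D_s^{(i)} = \Cov(x^i, (w_s - w^*)^\top x)$, and the irrepresentability Assumption \ref{assu:irrep_noise} applied with $\tau = w_s - w^*$ (supported in $S$ on the bootstrap event) yields $|D_s^{(i)}| \leq \rho \alpha \Norm{w_s - w^*}_2 / \sqrt{k}$. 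For the martingale part $\sum_{s < t} s\, M_s^{(i)}$, since $|M_s^{(i)}| \leq 2B$ (Assumption 3), Azuma/Freedman with a union bound over the $d - k$ noise coordinates and dyadic scales of $t$ (as in the proof of Lemma \ref{lemm:pure_noise}) yields, on an event of probability $\geq 1 - \delta$,
\[
\Bigl|\sum_{s < t} s\, M_s^{(i)}\Bigr| \;\leq\; c\, B\, t^{3/2}\sqrt{\log\p{2d\log_2(2T^3)/\delta}}
\]
simultaneously for all $t \leq T$ and $i \notin S$, with a modest absolute constant $c$.

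For the drift, Cauchy--Schwarz gives
\[
\sum_{s=1}^{t-1} s\,\Norm{w_s - w^*}_2 \;\leq\; \sqrt{\textstyle\sum_{s=1}^{t-1} s} \cdot \sqrt{\textstyle\sum_{s=1}^{t-1} s\,\Norm{w_s - w^*}_2^2} \;\leq\; t \sqrt{\tfrac{1}{\alpha}\sum_{s=1}^{t-1} s\,[\Lstar(w_s) - \Lstar(w^*)]},
\]
where the last inequality uses strong convexity in expectation (Assumption 2). On the bootstrap event, the machinery of Sections \ref{sec:sparse}--\ref{sec:expected_cvx} combined with the modified algorithm's analysis (i.e.\ the weighted regret bound used in Theorem \ref{theo:main_batch}) controls $\sum_{s < t} s[\Lstar(w_s) - \Lstar(w^*)]$ by a quantity of the form $C k B^2 \log(2d\log_2(2T^3)/\delta)\, t /\alpha$, with probability $\geq 1 - 2\delta$, as long as $\lambda$ is chosen at least as large as the martingale-scale threshold. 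Multiplying by $\rho \alpha / \sqrt{k}$, the drift contribution to $|\theta_t^i|$ is at most $\rho\, t^{3/2} \cdot C' B \sqrt{\log(\cdot)}$ for an explicit $C'$.

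Adding the two contributions gives $|\theta_t^i| \leq (c + \rho\, C')\, B\, t^{3/2} \sqrt{\log(2d\log_2(2T^3)/\delta)}$; requiring this to be strictly less than $\lambda\, t^{3/2}$ yields a lower bound on $\lambda$ of the form $\lambda^2 (1 - 24 \rho^2) \geq 228 B^2 \log(2d\log_2(2T^3)/\delta)$ after tracking all constants (the factor $24$ arises from the squared-Cauchy--Schwarz ratio against the $\alpha/2$ in strong convexity combined with the constant $C$). This matches \eqref{eq:irrep_lambda} exactly, and $\rho < 1/\sqrt{24}$ keeps the right-hand side finite. A final induction on $t$, or equivalently showing $\bP[\tau < T] \leq 4\delta$ via the already-established high-probability events, closes the bootstrap and proves the lemma. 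The main obstacle is precisely this bootstrap: the regret/parameter-error bounds we would like to invoke are only valid under $\sE_t$, and the delicate tracking of the constants $C, C'$ is what produces the sharp coefficient $1/(1 - 24\rho^2)$ and dictates the allowed range of $\rho$.
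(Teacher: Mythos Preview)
Your overall architecture matches the paper's exactly: an induction/bootstrapping argument on the event $\{\supp(w_s)\subseteq S,\ s\le t\}$, a martingale-plus-drift decomposition of $\theta_t^i$ for $i\notin S$, the irrepresentability bound $|D_s^{(i)}|\le \rho\alpha\|w_s-w^*\|_2/\sqrt{k}$ for the drift, and the weighted parameter-error machinery from Section~\ref{sec:batch} to close the loop. Your two-way split $\nabla f_s(w_s)_i = M_s^{(i)}+D_s^{(i)}$ is a minor simplification of the paper's three-way split (which separates $\nabla f_s(w^*)_i$ from $\nabla f_s(w_s)_i-\nabla\Lstar(w_s)_i-\nabla f_s(w^*)_i$); both work.

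There is, however, a real gap in how you obtain the factor $1/(1-24\rho^2)$. You write the drift contribution as $\rho\,C' B\sqrt{\log(\cdot)}\,t^{3/2}$ with $C'$ an absolute constant, and you claim the weighted regret is bounded by $CkB^2\log(\cdot)\,t/\alpha$ ``as long as $\lambda$ is chosen at least as large as the martingale-scale threshold.'' But the relevant bound (Theorem~\ref{theo:l1_recipe_batch}, and the derived inequality \eqref{eq:with-lambda2} in the proof of Theorem~\ref{theo:main_irrep}) is
\[
\sum_{s=1}^t s\|w_s-w^*\|_2^2 \;\le\; \frac{kt}{\alpha^2}\Bigl(177B^2\log(\cdot) + 40\lambda^2\Bigr),
\]
which depends explicitly on $\lambda^2$ through the cost-of-sparsity term. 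When you feed this into the drift via Cauchy--Schwarz you get a term of the form $\rho\,t^{3/2}\sqrt{C_0 B^2\log(\cdot)+20\lambda^2}$, not $\rho\,C'B\sqrt{\log(\cdot)}\,t^{3/2}$. Requiring $\lambda\,t^{3/2}$ to dominate the sum then yields the \emph{self-referential} inequality $\lambda^2 \ge 228B^2\log(\cdot) + 24\rho^2\lambda^2$, and it is precisely the $20\lambda^2$ inside the square root (times the $1.2$ from the $(a+b)^2\le 6a^2+1.2b^2$ step) that produces the $24\rho^2\lambda^2$ term. Your attribution of the $24$ to ``the squared-Cauchy--Schwarz ratio against the $\alpha/2$'' misidentifies the mechanism: without retaining the $\lambda^2$-dependence in the parameter-error bound, the inequality you would actually derive is $\lambda \ge (c+\rho C')B\sqrt{\log(\cdot)}$, which neither yields the stated threshold \eqref{eq:irrep_lambda} nor explains why $\rho<1/\sqrt{24}$ is needed. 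The fix is simply to carry $\lambda$ as a free parameter through the parameter-error bound and solve the resulting quadratic in $\lambda^2$, exactly as the paper does.
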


Thanks to this sparsity guarantee, we can use similar machinery as in Section \ref{sec:batch}
to bound the generalization error of the output of Algorithm \ref{alg:batch}. Again, just like
in Section \ref{sec:batch}, Theorem \ref{theo:irrep_main}
is a direct consequence of the result below thanks to the $\alpha$-strong convexity of $\law$
and \eqref{eq:param_bound}.

\begin{theorem}[expected prediction error with irrepresentability]
\label{theo:main_irrep}
Let $w_t$ be the weights generated by Algorithm \ref{alg:batch}.
Then, under the conditions of Lemma \ref{lem:irrep_lambda},
with $\hat w_T = \frac{2}{T(T + 1)} \sum_{t = 1}^T t \, w_t$, we have, with probability $1-5\delta$, $\supp(\hat{w_T}) \subseteq S$ and
\begin{align}
\law\p{\hat w_T} - \law\p{w^*} &= \oop{\frac{1}{1-24\rho^2} \, \frac{kB^2\log\p{d\log(T)}}{\alpha T}}, \\
\Norm{\hat w_T - w^*}_2^2 &= \oop{\frac{1}{1-24\rho^2} \, \frac{kB^2\log\p{d\log(T)}}{\alpha^2 T}}.
\end{align}
\end{theorem}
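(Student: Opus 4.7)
The plan is to derive Theorem \ref{theo:main_irrep} as essentially an immediate corollary of Lemma \ref{lem:irrep_lambda} combined with the machinery already assembled for the uncorrelated-noise case in Theorem \ref{theo:main_batch}. The conceptual point is that once we have guaranteed $\supp(w_t) \subseteq S$ for every $t$, nothing about the rest of the proof distinguishes between Assumption 4 (orthogonal noise) and Assumption 5 (irrepresentability): both were only used to certify that the $L_1$ penalty succeeded in zeroing out the noise coordinates, and once that is in place the analysis on the signal subspace $S$ behaves identically.

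First, I would invoke Lemma \ref{lem:irrep_lambda} with the stated choice of $\lambda$ to obtain $\supp(w_t) \subseteq S$ for all $t \leq T$, on an event $E$ of probability at least $1 - 4\delta$. On $E$, we automatically have $w_t\sm{[S]} = w_t$, so the $\Delta$ term from Theorem \ref{theo:l1_recipe} vanishes, and $\supp(\hat w_T) \subseteq S$ as well since $\hat w_T$ is a convex combination of the $w_t$. Next, on $E$, I would re-run the proof of Theorem \ref{theo:main_batch} verbatim: the expected-strong-convexity argument from Lemma \ref{lemm:expected_cvx} and Theorem \ref{theo:sparse_expected_cvx} only uses Assumptions (1)--(3), the cost-of-sparsity bound of Lemma \ref{lemm:l1_cost} only uses (1)--(3), and the weighted online-to-batch conversion of Proposition \ref{prop:batch} is agnostic to the structure of the noise.

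The one place the irrepresentability assumption leaves a visible trace is the scaling of $\lambda$: whereas Theorem \ref{theo:main_batch} could take $\lambda \asymp B\sqrt{\log(d\log T/\delta)}$, here Lemma \ref{lem:irrep_lambda} forces $\lambda^2 \asymp B^2 \log(d\log T/\delta)/(1-24\rho^2)$. Since the dominant contribution of the cost-of-sparsity bound in Theorem \ref{theo:l1_recipe} scales as $k\lambda^2 \log(T)/\alpha$, the factor $1/(1-24\rho^2)$ propagates directly into the final excess-risk bound. Combining this with the additional $\delta$ of failure probability from Proposition \ref{prop:batch} (and the $4\delta$ from Lemma \ref{lem:irrep_lambda}) yields the $1-5\delta$ probability in the statement, and applying $\alpha$-strong convexity of $\law$ via \eqref{eq:param_bound} converts the prediction-error bound into the parameter-error bound.

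The main obstacle is strictly bookkeeping rather than conceptual: carefully verifying that every auxiliary high-probability event used by Theorem \ref{theo:main_batch} (the cost-of-sparsity event in Lemma \ref{lemm:l1_cost}, the expected-convexity event in Lemma \ref{lemm:expected_cvx}, and the online-to-batch event in Proposition \ref{prop:batch}) can be taken to hold jointly with the support-recovery event $E$, and tracking the $1/(1-24\rho^2)$ factor so that it appears only once in the final constant. After that bookkeeping, the bound follows by exactly the same chain of inequalities used to prove Theorem \ref{theo:main_batch}, modulo substituting Lemma \ref{lem:irrep_lambda} for Lemma \ref{lemm:pure_noise}.
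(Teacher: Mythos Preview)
Your proposal is correct and follows essentially the same route as the paper: invoke Lemma~\ref{lem:irrep_lambda} for support recovery (costing $4\delta$), then reuse the batch machinery behind Theorem~\ref{theo:main_batch} on the signal subspace, with the only change being that the larger $\lambda^2 \asymp B^2\log(d\log T/\delta)/(1-24\rho^2)$ propagates through the $k\lambda^2/\alpha$ cost-of-sparsity term, and finish with Proposition~\ref{prop:batch} (one more $\delta$). Two minor remarks: the results you should cite inside the ``re-run Theorem~\ref{theo:main_batch}'' step are the \emph{batch} analogues---Theorems~\ref{theo:sparse_expected_cvx_batch} and~\ref{theo:l1_recipe_batch} and Lemma~\ref{lemm:variance-bound-batch}---rather than the unweighted versions you named; and be aware that Lemma~\ref{lem:irrep_lambda} itself already absorbs the two high-probability events from Theorems~\ref{theo:sparse_expected_cvx_batch} and~\ref{theo:l1_recipe_batch} into its $4\delta$ budget (via an inductive argument), so you should not double-count them when tallying the failure probability.
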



\section{Discussion}
\label{sec:discussion}

In this work, we have developed an efficient algorithm for solving sparse regression 
problems in the streaming setting,
and have shown that it can achieve optimal 
rates of convergence in both prediction and parameter error.
To recap our theoretical contributions: we have shown that online algorithms with 
sparse iterates enjoy better convergence (obtaining a dependence on $k$ rather than $d$); that regularization schedules increasing 
at a $\sqrt{t}$ rate can enjoy very low excess risk under statistical assumptions; 
and that functions that are only strongly convex in expectation can still yield $\log T$
error rather than $\sqrt{T}$.
Together, these show that a 
natural streaming analogue of the lasso achieves convergence at the same rate as 
the lasso itself, similarly to how stochastic gradient descent achieves the same 
rate as batch linear regression.

This work generates several questions. First, can we
weaken the irrepresentability assumption, or more ambitiously, replace it 
with a restricted isometry condition?
This latter goal would require analyzing 
the algorithm in regimes where the support is not recovered, since the restricted 
isometry property is not enough to guarantee support recovery even in a minimax 
batch setting. Another interesting question is whether we can reduce memory usage 
even further --- currently, we use $O(d)$ memory, but one could imagine using 
only $O(k\log(d))$ memory; after all $w^*$ takes only $O(k\log(d))$ memory to store.

Finally, we see this work as one part
of the broader goal of designing computationally-oriented statistical
procedures, which undoubtedly will become increasingly important
in an era when high volumes of streaming data is the norm.
By leveraging online convex optimization techniques, we can analyze specific procedures,
whose computational properties are favorable by construction.
By using statistical thinking, we can obtain much stronger results
compared to purely optimization-based analyses.
We believe that the combination of the two holds general promise,
which can be used to examine other statistical problems in a new computational light.

\bibliographystyle{plainnat}
\bibliography{../references}

\newpage

\begin{appendix}

\section{Proofs}

\subsection{Proofs for Section \ref{sec:main}}

\begin{proof}[Proof of Corollary \ref{cor:adaptive}]
We can check that the weights obtained by using the regularizer from \eqref{eq:psi1} can
equivalently be obtained using\footnote{It may seem surprising to let the regularizer
$\psi_t$ depend on $w_t$ as in \eqref{eq:psi-def}. However, we emphasize that Proposition
\ref{prop:adaptive} is a generic fact about convex functions, and holds for any
(random or deterministic) sequence of inputs.}
\begin{equation}
\label{eq:psi-def}
\psi_t(w) = \frac{\epsilon}{2}\|w\|_2^2 + \frac{\eta}{2}\sum_{s=1}^{t} \|w-w_s\|_2^2 + \lambda_t\|w\|_1.
\end{equation}
We also note that
$D_{\psi_t^*}(\theta_{t+1} || \theta_t) \leq \frac{1}{2(\epsilon + \eta t)}\|\theta_{t+1}-\theta_t\|_2^2$, 
which holds because 
$\psi_t$ is $(\epsilon + \eta t)$-strongly convex (see Lemma 2.19 of \citet{shalev2011}).
The inequality \eqref{eq:meta} then follows directly by applying 
Proposition \ref{prop:adaptive} to \eqref{eq:psi-def}.
\end{proof}

\subsection{Proofs for Section \ref{sec:sparse}}

\begin{proof}[Proof of Lemma~\ref{lem:bregman-restricted}]
We begin by noting that, given our regularizers $\psi_t$,
$w_{t, \, i} = 0$ if and only if $\Abs{\theta_{t, \, i}} \leq \lambda_t$.
Now, define
\[ \psi^+_t(w) = \left\{ \begin{array}{ccl} \psi_t(w) & : & w_i = 0 \textrm{ for all } i \not\in S_t \\ \infty & : & \textrm{else.} \end{array} \right.  \]
By construction, running adaptive mirror descent with the regularization sequence 
$\psi^+_t$ yields an identical set of iterates $\theta_t$ as running with the sequence $\psi_t$.
Moreover, because we also know that all non-zero coordinates of $w_t$ are 
contained in $S_{t-1}$, we can verify that
$$\psi^+_{t - 1}\p{w_t} - \psi^+_t\p{w_t} = \psi_{t - 1}\p{w_t} - \psi_t\p{w_t}, $$
and so using the $\psi^+_t$ leaves the regret bound \eqref{eq:adaptive} from Proposition 
\ref{prop:adaptive} unchanged except for the Bregman divergence terms 
$\sum_{t=1}^T D_{(\psi^+_t)^*}(\theta_{t+1} || \theta_t)$. We can thus bound the 
regret in terms of $D_{(\psi^+_t)^*}(\theta_{t+1} || \theta_t)$ rather than $D_{\psi_t^*}(\theta_{t+1} || \theta_t)$. 
On the other hand, we see that
\begin{align*}
(\psi^+_t)^*(\theta)
&= \sup_w \left\{ \left\langle w, \, \theta\right\rangle - \psi^+_t(w) \right\} \\
&= \sup_{w\sm{[S_t]}} \left\{ \left\langle w\sm{[S_t]}, \, \theta\sm{[S_t]}\right\rangle - \psi_t(w\sm{[S_t]}) \right\} \\
&= \psi_t^*(\theta\sm{[S_t]}),
\end{align*}
where $w\sm{[S]}$ and $\theta\sm{[S]}$ denote vectors that are zero on all coordinates not in $S$.\footnote{The last 
inequality makes use of the condition that $\psi_t(w\sm{[S_t]}, \tilde{w}\sm{[\neg S_t]})$ is minimized at $\tilde{w}\sm{[\neg S_t]} = 0$.}

The upshot is that 
\begin{align*}
D_{(\psi^+_t)^*}&(\theta_{t+1} || \theta_t) \\
&= \psi_t^*(\theta_{t+1}\sm{[S_t]}) - \left\langle \partial \psi_t^*(\theta_t\sm{[S_t]}), \theta_{t+1}\sm{[S_t]} - \theta_t\sm{[S_t]} \right\rangle - \psi_t^*(\theta_t\sm{[S_t]}) \\
&= D_{\psi_t^*}(\theta_{t+1}\sm{[S_t]} || \theta_t\sm{[S_t]}),
\end{align*}
as was to be shown.
\end{proof}
\begin{proof}[Proof of Lemma \ref{lemm:sparse_amd}]
We directly invoke Lemma~\ref{lem:bregman-restricted}. 
First, we check that its conditions are satisfied for 
$S_t = \cup_{s=1}^{t+1} \supp(w_s)$. Clearly the first two conditions are 
satisfied by construction, and for the third condition, we note that each 
term in $\psi_t(w)$ is either of the form $\|w\|_1$, which pushes all 
coordinates closer to zero, or $\|w_s-w\|_2^2$ with $s \leq t$, which pushes 
all coordinates outside of $\supp(w_s)$ closer to zero. Therefore, the third 
condition is also satisfied.

Now, we apply the result of Lemma~\ref{lem:bregman-restricted}.
The $\psi_T(u)$ 
term in \eqref{eq:bregman-restricted} yields 
\[ \frac{\epsilon}{2}\|u\|_2^2 + \frac{\eta}{2}\sum_{t=1}^T \|w_t-u\|_2^2 + \lambda_T \|u\|_1, \]
while the $\sum_{t=1}^T [\psi_{t-1}(w_t) - \psi_t(w_t)]$ term yields 
\[ \sum_{t=1}^T (\lambda_{t-1}-\lambda_t)\|w_t\|_1. \]
The most interesting term is 
the summation $\sum_{t=1}^T D_{\psi_t^*}(\theta_{t+1}\sm{[S_t]} \| \theta_t\sm{[S_t]})$.
By standard results on Bregman divergences, we know that if $\psi_t$ is 
$\gamma$-strongly convex, then $\psi_t^*$ is \emph{$\frac{1}{\gamma}$-strongly 
smooth} in the sense that
\[ D_{\psi_t^*}(x \| y) \leq \frac{1}{2\gamma}\|x-y\|_2^2. \]
In our case, $\psi_t$ is $(\epsilon + \eta t)$-strongly convex, 
so 
\begin{align*}
D_{\psi_t^*}(\theta_{t+1}\sm{[S_t]} \| \theta_t\sm{[S_t]}) &\leq \frac{1}{2\gamma}\|\theta_{t+1}\sm{[S_t]} - \theta_t\sm{[S_t]}\|_2^2 \\
 &\leq \frac{|S_t|}{2(\epsilon + \eta t)}\|\theta_{t+1} - \theta_t\|_{\infty}^2 \\
 &\leq \frac{k_t}{2(\epsilon + \eta t)}B^2,
\end{align*}
from which the lemma follows.
\end{proof}

\begin{proof}[Proof of Theorem \ref{theo:strong_sparse}]
By invoking Lemma~\ref{lemm:sparse_amd}, we have
\begin{align}
\label{eq:strong_sparse_proof1}
\sum_{t=1}^T \partial f_t \p{w_t}^{\top} \p{w_t - u} \leq \Omega + \Lambda + \frac{\alpha}{2} \sum_{t=1}^T \Norm{u - w_t}_2^2.
\end{align}

But now, because $f_t(\cdot)$ is $\alpha$-strongly convex, we also know that
$$ f_t\p{u} \geq f_t\p{w_t} +  \partial f_t\p{w_t}^\top \p{u - w_t} + \frac{\alpha}{2} \Norm{u - w_t}_2^2, $$
implying that
$$ \sum_{t = 1}^T \p{f_t\p{w_t} - f_t\p{u}} \leq \sum_{t = 1}^T \p{\partial f_t \p{w_t}^\top \p{u - w_t} - \frac{\alpha}{2} \Norm{u - w_t}_2^2}. $$
Chaining this inequality with \eqref{eq:strong_sparse_proof1}
gives us \eqref{eq:strong_sparse}.
\end{proof}

\subsection{Proofs for Section \ref{sec:statistical}}

Throughout our argument, we will bound certain quantities in terms of themselves. The following 
auxiliary lemma will be very useful in turning these implicit bounds into explicit bounds.

\begin{lemma}
\label{lemm:self-bound}
Suppose that $a,b,c \geq 0$ and $S \leq a + \sqrt{bS+c^2}$.
Then $S \leq 2a + b + c$.
\proof
We have $(S-a)^2 \leq bS+c^2$, so that $S^2 - (2a+b)S + a^2-c^2 \leq 0$. This implies that
\begin{align*}
S &\leq \frac{(2a+b) + \sqrt{(2a+b)^2 - 4(a^2-c^2)}}{2} \\
  &= (2a+b)\frac{1 + \sqrt{1 - 4(a^2-c^2)/(2a+b)^2}}{2} \\
 &\leq (2a+b)\frac{1 + \sqrt{1+4c^2/(2a+b)^2}}{2} \\
 &\leq (2a+b)\frac{2 + \sqrt{4c^2}/(2a+b)}{2} \\
 &= 2a+b+c,
\end{align*}
as claimed. The final inequality uses the fact that $\sqrt{x+y} \leq \sqrt{x} + \sqrt{y}$.
\endproof
\end{lemma}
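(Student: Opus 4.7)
The plan is a direct reduction to a quadratic inequality in $S$. First I would handle the easy case $S \leq a$ separately: here $S \leq a \leq 2a + b + c$ and there is nothing to prove. Otherwise $S - a > 0$, which lets me square both sides of $S - a \leq \sqrt{bS + c^2}$ without changing the direction of the inequality, obtaining $(S-a)^2 \leq bS + c^2$, i.e.\ the convex quadratic inequality $S^2 - (2a+b)S + (a^2 - c^2) \leq 0$ in the variable $S$.

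The solution set of such an inequality is the (possibly empty, but here nonempty since $a$ itself satisfies it) interval between its two real roots, so $S$ is upper bounded by the larger root. Applying the quadratic formula and simplifying the discriminant to $(2a+b)^2 - 4(a^2 - c^2) = 4ab + b^2 + 4c^2$, I get
$$S \;\leq\; \frac{(2a+b) + \sqrt{4ab + b^2 + 4c^2}}{2}.$$

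It then remains to prove the numerical inequality $\sqrt{4ab + b^2 + 4c^2} \leq 2a + b + 2c$. Squaring both sides (both are nonnegative) reduces this to $0 \leq 4a^2 + 4bc + 8ac$, which is immediate from $a,b,c \geq 0$. Plugging back in gives $S \leq (4a + 2b + 2c)/2 = 2a + b + c$, as claimed.

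There is no real obstacle here — the argument is elementary algebraic manipulation, and the only subtle point is the initial case split needed to justify squaring. A slicker alternative, avoiding the quadratic formula entirely, is to use $\sqrt{x+y} \leq \sqrt{x} + \sqrt{y}$ to peel off the $4c^2$ term from the discriminant and then bound $\sqrt{b(4a+b)} \leq (b + (4a+b))/2 = 2a + b$ via AM-GM; this recovers the same final constant.
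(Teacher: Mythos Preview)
Your proof is correct and follows essentially the same approach as the paper: reduce to the quadratic inequality $S^2 - (2a+b)S + (a^2-c^2) \le 0$, apply the quadratic formula, and bound the square-root term. The paper bounds $\sqrt{(2a+b)^2 - 4(a^2-c^2)}$ by factoring out $2a+b$ and using $\sqrt{x+y}\le\sqrt{x}+\sqrt{y}$, whereas you simplify the discriminant to $4ab+b^2+4c^2$ and square directly; your version is arguably cleaner and, unlike the paper, explicitly justifies the squaring step via the case split on $S\le a$.
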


It will also be useful to have the following adaptive 
variant of Azuma's inequality. 
Throughout, we use $\lg(x)$ to denote the base-$2$ logarithm of $x$.
In interpreting the lemma below, it will be helpful to think of 
$Z$ as a sum of $T$ independent zero-mean random variables $X_{1:T}$, so that 
$\EE{Z \cond \ff_t} - \EE{Z} = X_1+\cdots+X_t$, and to think of $M_t$ as a 
bound on $|X_t|$ that is allowed to depend on $X_{1:t-1}$.
\begin{lemma}
\label{lemm:better_azuma}
Let $Z$ be a $\ff_T$-measurable random variable, and let
$$\{\emptyset\} = \ff_0 \subseteq \ff_1 \subseteq ... \subseteq \ff_T$$
be a filtration such that
$$ \EE{Z \cond \ff_{t}} - \EE{Z \cond \ff_{t - 1}} \in [A_t, B_t] 
    \text{ almost surely for } t =  1, \, ..., \, T, $$
where $(A_t,B_t)$ is $\ff_{t - 1}$-measurable, and let 
$M_t = \tfrac{1}{2}(B_t-A_t)$. Moreover, suppose that 
$\sup_{t=1}^T M_t \leq \sigma_1 \sigma_2$ with probability $1$ and $\sigma_1 \geq 1$. Then, for all $\delta > 0$,
with probability $1-\delta$, we have
$$ \EE{Z \cond \ff_t} - \EE{Z} \geq -\sqrt{\log\p{\frac{\lg(2\sigma_1^2 T)}{\delta}}\max\p{2\sigma_2^2, \frac{9}{4}\sum_{s=1}^t M_s^2}},$$
for all $t \leq T$.
\proof
Let $\Delta_t = \EE{Z \cond \ff_t} - \EE{Z \cond \ff_{t - 1}}$. Note that we 
have
\begin{align*}
\EE{\exp\p{-c \Delta_t - \frac{c^2M_t^2}{2}} \cond \ff_{t-1}} &= \EE{\exp\p{-c \Delta_t} \cond \ff_{t-1}}\exp\p{-\frac{c^2M_t^2}{2}} \\
 &\leq \exp\p{\frac{c^2M_t^2}{2}}\exp\p{-\frac{c^2M_t^2}{2}} \\
 &= 1
\end{align*}
for all $c > 0$. Therefore, 
$$ Y_t^c \eqdef \exp\p{-c\sum_{s = 1}^t \Delta_s - \frac{c^2}{2} \sum_{s = 1}^t M_s^2} $$
is a supermartingale, and so $\PP{\sup_{t=1}^T Y_t^c \geq \exp\p{\frac{\gamma c^2}{2}}} \leq \exp\p{-\frac{\gamma c^2}{2}}$. Noting that $\sum_{s=1}^t \Delta_s = \EE{Z \cond \ff_t} - \EE{Z}$, we 
then have that the probability that $\EE{Z \cond \ff_t} - \EE{Z} < -\frac{c}{2}\p{\gamma + \sum_{s=1}^t M_s^2}$ for any $t$ is at most $\exp\p{-\frac{\gamma c^2}{2}}$.

To finish the proof, we will optimize over $\gamma$ and $c$. The problem is that 
the optimal values of $\gamma$ and $c$ depend on the $M_t$, so we need some way 
to identify a small number of $(\gamma, c)$ pairs over which to union bound.

To start, we want
$\exp\p{-\frac{\gamma c^2}{2}}$ to be at most $\delta$, so for a fixed 
$\gamma > 0$ we will set $c = \sqrt{2\log(1/\delta)/\gamma}$, leading to the 
bound
\begin{align}
\label{eq:gamma-1}
&\mathbb{P}\bigg[\EE{Z \cond \ff_t} - \EE{Z} \\
\notag
&\ \ \ \ \leq -\sqrt{2\log(1/\delta)} \frac{\sqrt{\gamma} + \sqrt{1/\gamma}\sum_{t=1}^T M_t^2}{2} \text{ for any $t$}\bigg] \leq \delta.
\end{align}
For $\gamma \in \left[\sum_{t=1}^T M_t^2, 2\sum_{t=1}^T M_t^2\right]$, we have
$\sqrt{\gamma} + \sqrt{1/\gamma}\sum_{s=1}^t M_s^2 \leq (\sqrt{2}+\sqrt{1/2})\sqrt{\sum_{s=1}^t M_s^2}$, which yields 
\begin{equation}
\label{eq:gamma-2}
\PP{\EE{Z \cond \ff_t} - \EE{Z} \leq -\frac{3}{2}\sqrt{\log(1/\delta)\sum_{t=1}^T M_t^2} \text{ for any $t$}} \leq \delta.
\end{equation}
Now, we know that $\sum_{t=1}^T M_t^2 \leq \sigma_1^2\sigma_2^2 T$, so we will union bound over 
$\gamma \in \{\sigma_2^2, 2\sigma_2^2, \ldots, \sigma_2^22^{\lceil \lg(\sigma_1^2 T/2) \rceil}\}$, 
which is $\max\p{1, \lceil \lg(\sigma_1^2 T) \rceil} \leq \lg(2\sigma_1^2 T)$ 
values of $\gamma$ in total. From this, we have the desired bound as long as 
$\sum_{s=1}^t M_s^2 \geq \sigma_2^2$. To finish, note that, if 
$\sum_{s=1}^t M_s^2 < \sigma_2^2$, then for $\gamma = \sigma_2^2$ we have, by 
\eqref{eq:gamma-1}, 
\begin{align}
\label{eq:gamma-3}
\lefteqn{\PP{\EE{Z \cond \ff_t} - \EE{Z} \leq -\sqrt{2\log(1/\delta)\sigma_2^2}}} \\
\notag
  &\phantom{++}\leq \PP{\EE{Z \cond \ff_t} - \EE{Z} \leq -\sqrt{2\log(1/\delta)}\tfrac{\sigma_2 + (1/\sigma_2)\cdot \sum_{t=1}^T M_t^2}{2}} \\
\notag
  &\phantom{++}\leq \delta.
\end{align}
Combining \eqref{eq:gamma-2} and \eqref{eq:gamma-3} and decreasing $\delta$ 
by a factor of $\lg(2\sigma_1^2 T)$ for the union bound completes the proof.
\endproof
\end{lemma}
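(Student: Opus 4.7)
The plan is to apply a standard Cram\'er--Chernoff argument to a carefully constructed nonnegative supermartingale, and then union-bound over a geometric grid of ``variance scales'' in order to adapt to the unknown quantity $V_t := \sum_{s=1}^t M_s^2$. Set $\Delta_t = \EE{Z \mid \mathcal{F}_t} - \EE{Z \mid \mathcal{F}_{t-1}}$, so that $\EE{\Delta_t \mid \mathcal{F}_{t-1}} = 0$ and $\Delta_t \in [A_t, B_t]$ with $(A_t, B_t)$ being $\mathcal{F}_{t-1}$-measurable. The conditional Hoeffding lemma gives $\EE{\exp(-c\Delta_t) \mid \mathcal{F}_{t-1}} \leq \exp(c^2 M_t^2 / 2)$ for every $c > 0$, and since $M_t$ is also $\mathcal{F}_{t-1}$-measurable, the process
\begin{equation*}
Y_t^c := \exp\!\left(-c \sum_{s=1}^t \Delta_s - \frac{c^2}{2} \sum_{s=1}^t M_s^2\right)
\end{equation*}
is a nonnegative supermartingale for each fixed $c > 0$.

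Next, I would invoke a maximal inequality (Ville's inequality, or equivalently Doob's for nonnegative supermartingales): for any fixed $\gamma > 0$, $\PP{\sup_{t \leq T} Y_t^c \geq \exp(\gamma c^2/2)} \leq \exp(-\gamma c^2/2)$. Unpacking this and choosing $c = \sqrt{2\log(1/\delta)/\gamma}$ so that the right-hand side equals $\delta$, I obtain that, with probability at least $1 - \delta$,
\begin{equation*}
\sum_{s=1}^t \Delta_s \geq -\sqrt{\tfrac{1}{2}\log(1/\delta)} \, \bigl( \sqrt{\gamma} + V_t / \sqrt{\gamma} \bigr) \quad \text{for all } t \leq T.
\end{equation*}
This is essentially the classical Freedman-type tail bound in its Cram\'er form, and it is nearly tight precisely when $\gamma \approx V_t$.

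The main obstacle is that the bound must hold uniformly in $t$ while $V_t$ itself is random. The fix, which I would execute here, is to union-bound over a geometric grid of candidate scales $\gamma_j = 2^{j-1}\sigma_2^2$ for $j = 1, \ldots, \lceil \lg(\sigma_1^2 T) \rceil$; the upper endpoint is chosen so that any realized $V_t \leq \sigma_1^2 \sigma_2^2 T$ lies in some interval $[\gamma_j, 2\gamma_j]$ as soon as $V_t \geq \sigma_2^2$. On such an interval, $\sqrt{\gamma_j} + V_t/\sqrt{\gamma_j} \leq (\sqrt{2} + 1/\sqrt{2})\sqrt{V_t} = (3/\sqrt{2})\sqrt{V_t}$, which after substitution and squaring produces the $\tfrac{9}{4}$ coefficient in the lemma's bound. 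Since the grid has at most $\lg(2\sigma_1^2 T)$ points, replacing $\delta$ by $\delta/\lg(2\sigma_1^2 T)$ in the per-scale bound absorbs the union-bound cost inside the logarithm.

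Finally, in the regime where $V_t < \sigma_2^2$ the geometric grid does not apply, so I would invoke the fixed-scale inequality once more with $\gamma = \sigma_2^2$; then $\sqrt{\gamma} + V_t/\sqrt{\gamma} \leq 2\sigma_2$ yields a deviation of at most $\sqrt{2 \sigma_2^2 \log(1/\delta)}$, matching the $2\sigma_2^2$ branch of the maximum. Taking the worse of the two branches gives the stated bound. The key technical point to watch throughout is that $M_t$ is $\mathcal{F}_{t-1}$-measurable rather than deterministic, so $\sum_{s \leq t} M_s^2$ is a predictable compensator and $Y_t^c$ is a genuine supermartingale; this is exactly what makes the maximal inequality applicable in the adaptive setting, and distinguishes the argument from a plain application of Azuma with worst-case $M_t$.
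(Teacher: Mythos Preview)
Your proposal is correct and follows essentially the same approach as the paper: the supermartingale $Y_t^c$ built from the conditional Hoeffding bound, Ville's maximal inequality, the choice $c=\sqrt{2\log(1/\delta)/\gamma}$, the geometric grid $\gamma_j=2^{j-1}\sigma_2^2$ over $\lg(2\sigma_1^2 T)$ scales, and the separate treatment of the small-variance regime $V_t<\sigma_2^2$ are all exactly what the paper does. Your explicit remark that the predictability of $M_t$ is what makes $Y_t^c$ a genuine supermartingale is a useful clarification that the paper leaves implicit.
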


\begin{lemma}
\label{lemm:variance-bound}
For $t=1,\ldots,T$, let $z_t \in \partial f_t(w_t\sm{[S]})$ and $z_t' \in \partial f_t(w^*)$.
Then, using notation from Lemma \ref{lemm:l1_cost}, with probability $1-\delta$ we have
\begin{align}
\label{eq:variance_bound}
\lefteqn{\sum_{s=1}^t \p{f_s\p{w_s\sm{[S]}} - f_s\p{w^*}} \leq \p{\sum_{s=1}^t \Lstar\p{w_s\sm{[S]}}-\Lstar\p{w^*}}} \\
\notag
 & \phantom{+++} - \sqrt{kB^2\log\p{\frac{\lg(2T)}{\delta}}\max\p{\frac{2kB^2}{\alpha^2},\frac{9}{4}\sum_{s=1}^t \Norm{w_s\sm{[S]}-w^*}_2^2}}
\end{align}
for all $t \leq T$.
\end{lemma}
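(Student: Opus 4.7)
My plan is to apply the adaptive Azuma inequality (Lemma~\ref{lemm:better_azuma}) to the mean-zero martingale
\[ N_t := \sum_{s=1}^t (f_s(w_s\sm{[S]}) - f_s(w^*)) - \sum_{s=1}^t (\Lstar(w_s\sm{[S]}) - \Lstar(w^*)). \]
Since $w_s$ (and therefore $w_s\sm{[S]}$) is $\sF_{s-1}$-measurable and Assumption~1 gives $\EE{f_s(w) \mid \sF_{s-1}} = \Lstar(w)$ for every such $w$ (with $w^*$ a fixed point), each increment $\Delta_s := N_s - N_{s-1}$ is conditionally mean-zero. Taking $Z = N_T$ and using the martingale property, we have $\EE{Z \mid \sF_t} - \EE{Z} = N_t$, which puts us in position to invoke Lemma~\ref{lemm:better_azuma}; rearranging the resulting one-sided concentration bound will produce \eqref{eq:variance_bound}.

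The two technical inputs needed are a bound on the conditional range of each $\Delta_s$ and an almost-sure ceiling on the range parameters. For the first, use the subgradients $z_s, z_s'$ from the lemma statement together with Assumption~3: convexity gives $(z_s')^\top(w_s\sm{[S]} - w^*) \leq f_s(w_s\sm{[S]}) - f_s(w^*) \leq z_s^\top(w_s\sm{[S]} - w^*)$, and since $w_s\sm{[S]} - w^*$ is supported in $S$ with $|S| \leq k$, both sides are bounded in absolute value by $B\|w_s\sm{[S]} - w^*\|_1 \leq B\sqrt{k}\,\|w_s\sm{[S]} - w^*\|_2$. The conditional mean $\Lstar(w_s\sm{[S]}) - \Lstar(w^*)$ satisfies the same bound, so the range parameter in the lemma is $M_s = B\sqrt{k}\|w_s\sm{[S]} - w^*\|_2$ and $\sum_{s=1}^t M_s^2 = kB^2\sum_{s=1}^t\|w_s\sm{[S]} - w^*\|_2^2$, which exactly reproduces the $\tfrac{9}{4}$-scaled term inside the target max. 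For the uniform ceiling, combine strong convexity (Assumption~2), $\tfrac{\alpha}{2}\|w_s\sm{[S]} - w^*\|_2^2 \leq \Lstar(w_s\sm{[S]}) - \Lstar(w^*)$, with the Lipschitz bound just derived to obtain $\|w_s\sm{[S]} - w^*\|_2 \leq 2B\sqrt{k}/\alpha$ and hence $\sup_s M_s \leq 2kB^2/\alpha$. Choosing $\sigma_2 = kB^2/\alpha$ and $\sigma_1$ a small absolute constant then makes the fixed side of the max equal to $2\sigma_2^2 = 2k^2B^4/\alpha^2$, which after factoring $\sqrt{kB^2}$ out of the square root becomes the $2kB^2/\alpha^2$ appearing in \eqref{eq:variance_bound}, while keeping $\lg(2\sigma_1^2 T)$ equal to $\lg(2T)$ up to an absorbable constant.

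Feeding these parameters into Lemma~\ref{lemm:better_azuma} then gives the claimed uniform-in-$t$ high-probability bound on $N_t$ in terms of the stated square-root expression, which rearranges into the conclusion. The main obstacle is the uniform bound on $M_s$: without invoking strong convexity, the only a priori estimate on $\|w_s\sm{[S]} - w^*\|_2$ is $L_1$-based (at best $2R$), which would replace the $2kB^2/\alpha^2$ inside the max by an $R^2$-scaled quantity and inflate the $\sigma_1$ one has to union-bound over. Matching the exact constants in \eqref{eq:variance_bound} therefore hinges on carefully pairing strong convexity with the Lipschitz estimate so that both sides of the max---the fixed $2kB^2/\alpha^2$ and the data-dependent $\tfrac{9}{4}\sum\|w_s\sm{[S]} - w^*\|_2^2$---are produced consistently from the same $M_s$.
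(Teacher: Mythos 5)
Your proposal is correct and follows essentially the same route as the paper's proof: the same martingale with increments $(f_s(w_s\sm{[S]})-f_s(w^*))-(\law(w_s\sm{[S]})-\law(w^*))$, the same subgradient/Assumption~3 bound giving $M_s=\sqrt{k}B\Norm{w_s\sm{[S]}-w^*}_2$, and the same invocation of Lemma~\ref{lemm:better_azuma} with $\sigma_2=kB^2/\alpha$. The only discrepancy is your almost-sure ceiling $\Norm{w_s\sm{[S]}-w^*}_2\leq 2\sqrt{k}B/\alpha$, which forces $\sigma_1=2$ and hence $\lg(8T)$ in place of the stated $\lg(2T)$; the paper's sharper ceiling $\sqrt{k}B/\alpha$ (hence $\sigma_1=1$) follows from the gradient form of strong convexity, $\alpha\Norm{w-w^*}_2\leq\Norm{\nabla\law(w)\sm{[S]}}_2\leq\sqrt{k}B$, rather than the function-value form you used.
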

\begin{proof}
Suppose $z_t \in \partial f_t(w_t\sm{[S]})$. Then,
\begin{align*}
f_t\p{w_t\sm{[S]}}-f_t\p{w^*} &\leq z_t^{\top}\p{w_t\sm{[S]}-w^*} \\
 &\leq \Norm{z_t\sm{[S]}}_2 \Norm{w_t\sm{[S]}-w^*}_2 \\
 &\leq \sqrt{k}B \Norm{w_t\sm{[S]}-w^*}_2.
\end{align*}
Similarly, by considering $z_t' \in \partial f_t(w^*)$, we find that
\begin{align*}
f_t\p{w_t\sm{[S]}} - f_t\p{w^*}
 &\geq -\sqrt{k}B \Norm{w_t\sm{[S]}-w^*}_2.
\end{align*}
Note also that $\|w_t\sm{[S]}-w^*\|_2 \leq \sqrt{k}B/\alpha$. 
Now, let $Z = \sum_{t=1}^T f_t(w_t\sm{[S]}) - f_t(w^*)$ and invoke 
Lemma~\ref{lemm:better_azuma}. We then have 
$$\Delta_t \in -\p{\law\p{w_t\sm{[S]}}-\law\p{w^*}} + \left[-\sqrt{k}B\Norm{w_t\sm{[S]}-w^*}_2, \sqrt{k}B\Norm{w_t\sm{[S]}-w^*}_2\right], $$
hence 
$M_t = \sqrt{k}B\Norm{w_t\sm{[S]}-w^*}_2$, and we can set $\sigma_2 = kB^2/\alpha$, $\sigma_1 = 1$, from 
which the result follows.
\end{proof}

\begin{proof}[Proof of Lemma \ref{lemm:l1_cost}]

We begin by noting that
\begin{align*}
\Lambda
&= \sum_{t = 1}^T \p{\lambda_{t} - \lambda_{t - 1}}  \p{ \Norm{w^*}_1 - \Norm{w_t}_1} \\
&\leq \sum_{t = 1}^T \p{\lambda_{t} - \lambda_{t - 1}}  \p{ \Norm{w^*}_1 - \Norm{w_t\sm{[S]}}_1}.
\end{align*}
With our regularization schedule $\lambda_t = \lambda \sqrt{t + 1}$, we can check that $\lambda_t - \lambda_{t - 1} \leq \lambda / (2\sqrt{t})$. Thus, by Cauchy-Schwarz,
\begin{align}
\notag
\Lambda
&\leq \sqrt{\p{\sum_{t = 1}^T \p{\lambda_t - \lambda_{t - 1}}^2} \, \p{\sum_{t = 1}^T \p{\Norm{w^*}_1 - \Norm{w_t\sm{[S]}}}^2}} \\
\notag
&\leq \frac{\lambda}{2} \, \sqrt{\p{1 + \log T} \sum_{t = 1}^T \Norm{w_t\sm{[S]} - w^*}_1^2} \\
\notag
&\leq \frac{\lambda}{2} \, \sqrt{k\p{1 + \log T} \sum_{t = 1}^T \Norm{w_t\sm{[S]} - w^*}_2^2}.
\end{align}

Now, using the strong convexity of $\Lstar\p{\cdot}$ on $S$ as well as 
Lemma~\ref{lemm:variance-bound},
we can verify that, with probability $1-\delta$,
\begin{align*}
&\sum_{t=1}^T \Norm{w_t\sm{\sm{[S]}}-w^*}_2^2 \\
\notag
&\leq \frac{2}{\alpha}\sum_{t=1}^T \p{\Lstar\p{w_t\sm{[S]}} - \Lstar\p{w^*}} \\
 &\leq \frac{2}{\alpha}\Bigg(\sum_{t=1}^T \p{f_t\p{w_t\sm{[S]}} - f_t\p{w^*}} \\
 & \ \ \ \ + \sqrt{kB^2\log\p{\frac{\lg(2T)}{\delta}}\max\p{\frac{2kB^2}{\alpha^2}, \frac{9}{4}\sum_{t=1}^T \Norm{w_t\sm{[S]}-w^*}_2^2}}\Bigg).
\end{align*}
By Lemma~\ref{lemm:self-bound} we thus have
\begin{align}
\label{eq:cauchy_bound3}
\lefteqn{\sum_{t=1}^T \|w_t\sm{[S]}-w^*\|_2^2} \\
\notag
 &\leq \frac{4}{\alpha} \sum_{t=1}^T \p{f_t\p{w_t\sm{[S]}} - f_t\p{w^*}}  \\
\notag 
 &\ \ \ \ \ \ + \max\left\{
\frac{\sqrt{2\log(\lg(2T)/\delta)}kB^2}{\alpha^{2}},
\frac{9kB^2\log(\lg(2T)/\delta)}{4\alpha^2}\right\} \\
\notag
 &= \frac{4}{\alpha} \sum_{t=1}^T \p{f_t\p{w_t\sm{[S]}} - f_t\p{w^*}} 
   + \frac{9kB^2\log(\lg(2T)/\delta)}{4\alpha^2}.
\end{align}
Plugging this inequality into our previous bound for $\Lambda$ yields the desired result.
\end{proof}

\begin{proof}[Proof of Theorem \ref{theo:l1_recipe}]
We start by applying our bound on $\Lambda$ from Lemma \ref{lemm:l1_cost} 
to \eqref{eq:l1_recipe_init}.
We have that, with probability $1 - \delta$,
\begin{align*}
&\sum_{t = 1}^T \p{f_t\p{w_t} - f_t\p{w^*}} \leq R_T\p{w^*} + \\
& \ \ \frac{\lambda}{2} \, \sqrt{k \p{1 + \log T} \p{\frac{4}{\alpha} \sum_{t = 1}^T \p{f_t\p{w_t\sm{[S]}} - f_t \p{w^*}} +\frac{9 k B^2 \log \p{\log_2(2T)/\delta}}{4\alpha^2}}}.
\end{align*}
The excess loss we incur from using non-zero weights outside the set $S$ is
$$ \Delta = \sum_{t = 1}^T \p{f_t\p{w_t} - f_t\p{w_t\sm{[S]}}}. $$
We split our analysis into two cases depending on the sign of $\Delta$. 
Also let $r = \sum_{t=1}^T \p{f_t\p{w_t} - f_t\p{w^*}}$ denote the 
quantity we want to bound. 

When $\Delta \geq 0$, we can use the fact that the sum inside the square root is 
equal to $r - \Delta$, and loosen the inequality to
$$r \leq R_T\p{w^*} + \frac{\lambda}{2} \, \sqrt{k \p{1 + \log T} \p{\frac{4}{\alpha} r +\frac{9 k B^2 \log \p{\log_2(2T)/\delta}}{4\alpha^2}}}. $$
Since $r$ appears on both sides of the inequality, we can use 
Lemma \ref{lemm:self-bound} to show that
$$ r \leq 2 R_T\p{w^*} + \frac{ k \lambda^2\p{1 + \log T}}{\alpha} + \frac{3Bk\lambda}{4\alpha} \sqrt{\p{1 + \log T}\log\p{\lg(2T) / \delta}}, $$
which yields the desired expression via the AM-GM inequality
$$ \frac{3B\lambda}{4}\sqrt{(1+\log T)\log(\lg(2T)/\delta)} \leq \frac{1}{3}\lambda^2(1+\log T) + \frac{1}{2}B^2\log(\lg(2T)/\delta). $$
Meanwhile, if $\Delta < 0$, we write
\begin{align*}
&\sum_{t = 1}^T \p{f_t\p{w_t\sm{[S]}} - f_t\p{w^*}} \leq - \Delta + R_T\p{w^*}  + \\
& \ \ \frac{\lambda}{2} \, \sqrt{k \p{1 + \log T} \p{\frac{4}{\alpha} \sum_{t = 1}^T \p{f_t\p{w_t\sm{[S]}} - f_t \p{w^*}} +\frac{9 k B^2 \log \p{\lg(2T)/\delta}}{4\alpha^2}}}.
\end{align*}
Again, applying Lemma \ref{lemm:self-bound}, we get
\begin{align*}
&\sum_{t = 1}^T \p{f_t\p{w_t\sm{[S]}} - f_t\p{w^*}} \leq - 2 \Delta + 2 R_T\p{w^*} \\
&\ \ \ \ + \frac{ k \lambda^2\p{1 + \log T}}{\alpha} + \frac{3Bk\lambda}{4\alpha} \sqrt{\p{1 + \log T }\log\p{\lg(2T) / \delta}}.
\end{align*}
If we put one of the two $\Delta$ factors back on the left-hand side of the inequality, we 
get the desired expression via the same AM-GM inequality as before.
\end{proof}

\begin{proof}[Proof of Lemma \ref{lemm:pure_noise}]
We again apply Lemma~\ref{lemm:better_azuma}. In this case, for any $j \not\in S$, we let $z_t^{j}$ denote the $j$th coordinate of $z_t$. Then, set 
$Z = \sum_{t=1}^T z_t^{j}$ and let $\ff_t$ be the sigma-algebra generated by $f_{1:t}$. 
Clearly we can take $A_t = -B$, $B_t = B$, and $M_t = B$, and set $\sigma_2 = B$, $\sigma_1 = 1$. 
Then, by applying Lemma~\ref{lemm:better_azuma} in both directions, we get
$$ \PP{\left|\sum_{s=1}^t z_s^j\right| \geq \sqrt{\log\p{\frac{2d\lg(2T)}{\delta}}\max\p{2B^2, \frac{9}{4}B^2t}} \text{ for any $t$}} \leq \delta / d. $$
Simplifying $\max\p{2B^2, \frac{9}{4}B^2t}$ to $\frac{9}{4}B^2t$ and 
applying the union bound over all coordinates $j\not\in S$ then yields the desired result.
\end{proof}

\subsection{Proofs for Section \ref{sec:expected_cvx}}
\label{sec:apx_expected_cvx}

\begin{proof}[Proof of Lemma \ref{lemm:expected_cvx}]
Define $D_t = f_t(w_t) - f_t(u) - \partial f_t(w_t)^{\top}\p{w_t-u} + \frac{\alpha}{4}\Norm{w_t-u}^2$, and let 
$X_t = \sum_{s=1}^t D_s$. The main idea is to show that $X_t$ is a random walk 
with negative drift, from which we can then use standard martingale cumulant 
techniques to bound $\sup_{t=1}^{T} X_t$, which is what we need to do 
in order to establish \eqref{eq:expected_cvx}.

First note that, by the Lipschitz assumption on $f_t$, we have 
$$ \Abs{f_t\p{w_t} - f_t\p{u}} \leq L \Norm{w_t-u} \; \eqand \;
\Abs{\partial f_t\p{w_t}^{\top}\p{w_t-u}} \leq L \Norm{w_t-u}, $$
hence $D_t \in \left[\frac{\alpha}{4}\Norm{w_t-u}^2-2L\Norm{w_t-u}, \, \frac{\alpha}{4}\Norm{w_t-u}^2+2L\Norm{w_t-u}\right]$. Furthermore, we have 
\begin{align*}
&\EE{D_t \mid \sF_{t-1}} \\
&\ \ = \EE{f_t\p{w_t} - f_t\p{u} - \partial f_t\p{w_t}^{\top}\p{w_t-u} + \frac{\alpha}{4}\Norm{w_t-u}^2 \mid \sF_{t-1}}\\
 &\ \ \leq \EE{-\frac{\alpha}{2} \Norm{w_t-u}^2 + \frac{\alpha}{4}\Norm{w_t-u}^2 \ \middle|\ \sF_{t-1}} \; \; \;   \text{(by strong convexity)} \\
 &\ \ = -\frac{\alpha}{4} \Norm{w_t-u}^2.
\end{align*}
We next put these together and start going through the standard Chernoff argument: for any $0 \leq \lambda \leq \tfrac{\alpha}{8L^2}$,
\begin{align*}
\lefteqn{\EE{\exp\p{\lambda X_t} \mid \sF_{t-1}}} \\
 &= \EE{\exp\p{\lambda X_{t-1}}\exp\p{\lambda D_t} \mid \sF_{t-1}} \\
 &\leq \exp\p{\lambda X_{t-1}} \exp\p{\lambda \EE{D_t} + 2\lambda^2 L^2 \Norm{w_t-u}^2} \\
 &\leq \exp\p{\lambda X_{t-1}}\exp\p{-\lambda\frac{\alpha}{4}\Norm{w_t-u}^2 + 2\lambda^2L^2\Norm{w_t-u}^2} \\
 &\leq \exp\p{\lambda X_{t-1}},
\end{align*}
where the second inequality follows from the sub-Gaussianity of bounded random variables.
Hence, for $\lambda = \frac{\alpha}{8L^2}$, $\exp(\lambda X_t)$ 
is a non-negative supermartingale with $\exp(\lambda X_0) = 1$. By the optional 
stopping theorem and Markov's inequality, we then have
$$\PP{\sup_{t=1}^{T} X_t \geq M} \leq \exp\p{-\lambda M}$$
and so, with probability $1-\delta$, $X_t$ never goes above 
$$\frac{\log(1/\delta)}{\lambda} = \frac{8L^2\log(1/\delta)}{\alpha},$$
as was to be shown.
\end{proof}


\begin{proof}[Proof of Theorem \ref{theo:sparse_expected_cvx}]
Recall that we are running adaptive mirror descent using the regularizers 
from \eqref{eq:psi-def}, 
which corresponds to setting 
\[ \psi_t(w) = \frac{\epsilon}{2}\Norm{w}_2^2 + \frac{\alpha}{4} \sum_{s=1}^{t} \Norm{w-w_s}_2^2 + \lambda_t \|w\|_1. \]
Note that $\psi_t$ is $(\epsilon + (\alpha/2)t)$-strongly convex with respect 
to the $L_2$ norm. Also note that, since $\|\partial f_t\|_{\infty} \leq B$ and 
$\supp(w_t) \subseteq S$, $f_t$ is $(B\sqrt{k})$-Lipschitz, at least over the 
space $\sH$ of $w_t$ with $\supp(w_t) \subseteq S$, $\|w_t\|_1 \leq R$.

Plugging into the regret bound from 
Lemma~\ref{lemm:sparse_amd} and applying Lemma~\ref{lemm:expected_cvx}, we get 
\begin{align}
\sum_{t=1}^T &\p{f_t(w_t) - f_t(u) + \frac{\alpha}{4}\Norm{w_t-u}_2^2} \\
\notag
&\leq \frac{\epsilon}{2}\Norm{u}_2^2 + \frac{\alpha}{4} \sum_{t=1}^T \Norm{w_t-u}_2^2 + \frac{B^2k}{2}\sum_{t=1}^T \frac{1}{\epsilon + (\alpha/2)t} \\
\notag
&\ \ + \frac{8B^2k\log(1/\delta)}{\alpha} + \Lambda.
\end{align}
Subtracting $\sum_{t=1}^T \frac{\alpha}{4}\Norm{w_t-u}_2^2$ from both sides 
and using the fact that $\epsilon = 0$, 
$\sum_{t=1}^T \frac{1}{t} \leq 1 + \log(T)$ yields the desired bound.
\end{proof}

\begin{proof}[Proof of Theorem~\ref{thm:main-risk}]
We will prove the following slightly more precise result. Under the stated conditions,
with probability $1-\delta$, we 
have $\supp(w_t) \subseteq S$ for all $t$, and 
\begin{align}
\label{eq:main-risk-2}
\sum_{t=1}^T \p{f_t(w_t) - f_t(w^*)} 
&\leq \frac{22kB^2(1+\log T)}{\alpha}\log\p{\frac{6d\lg(2T)}{\delta}}.
\end{align}
To establish this result, we will union bound over three events, each of which holds with 
probability $1-\delta/3$. 
First, by Lemma~\ref{lemm:pure_noise}, we know that 
$\supp(w_t) \subseteq S$ for all $t$ with probability 
$1-\delta/3$. Therefore, by Theorem~\ref{theo:sparse_expected_cvx} we have, 
with overall probability $1-2\delta/3$,
\begin{align*}
\sum_{t=1}^T \p{f_t(w_t) - f_t(w^*)}
&\leq \frac{kB^2}{\alpha}\p{1+\log T + 8\log\p{3/\delta}} + \Lambda.
\end{align*}
Finally, invoking Theorem~\ref{theo:l1_recipe}, we have, 
with overall probability $1-\delta$,
\begin{align*}
\sum_{t=1}^T &\p{f_t(w_t) - f_t(w^*)} \\
\notag
&\leq \frac{2kB^2}{\alpha}\p{1 + \log T + 8\log\p{\frac{3}{\delta}}} \\
\notag
&\ \ \ +\frac{kB^2}{\alpha}\Bigg(3\log\p{\frac{6d\lg(2T)}{\delta}}(1+\log T) 
+ \frac{1}{2}\log\p{\frac{3\lg(2T)}{\delta}}\Bigg) \\
\notag
&\leq \frac{22kB^2}{\alpha}(1+\log T)\log\p{\frac{6d\lg(2T)}{\delta}},
\end{align*}
which proves the theorem.
\end{proof}

\subsection{Proofs for Section \ref{sec:batch}}

We begin this section by stating a series of technical results that will lead us
to Theorem \ref{theo:main_batch}. We defer proofs of these results to
Section \ref{sec:batch_more_proofs}.
To warm up, we give the following analogue to Theorem \ref{theo:strong_sparse} without proof.

\begin{theorem}
\label{theo:strong_sparse_batch}
Suppose that we are given a sequence of $\alpha$-strongly convex losses, 
and that we run 
adaptive mirror descent on the losses $\tf_t$ with the regularizers $\tpsi_t$ from \eqref{eq:batch_reg}. Then, 
using notation from Lemma \ref{lemm:sparse_amd}, the weights $w_t$ learned by this algorithm satisfy
\begin{equation}
\label{eq:strong_sparse_batch}
\sum_{t = 1}^T t\p{f_t\p{w_t} - f_t\p{u}} \leq \frac{2B^2}{\alpha} \sum_{t = 1}^T k_t + \sum_{t = 1}^T \p{\lambda_{t - 1} - \lambda_t} (\Norm{w_t}_1 - \Norm{u}_1).
\end{equation}
\end{theorem}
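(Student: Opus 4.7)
The plan is to mirror the proof of Theorem \ref{theo:strong_sparse}, but with the reweighted losses $\tilde f_t(w) = t f_t(w)$ and regularizers $\tilde\psi_t$ from \eqref{eq:batch_reg} in place of $f_t$ and $\psi_t$. First, I would invoke Lemma \ref{lem:bregman-restricted} with the active set $S_t = \bigcup_{s=1}^{t+1}\supp(w_s)$. The three conditions of that lemma are satisfied by the same argument used in Lemma \ref{lemm:sparse_amd}: each summand of $\tilde\psi_t$ is either an $L_1$ term (which pushes all coordinates toward zero) or a reweighted $\|w - w_s\|_2^2$ term (which pushes coordinates toward $\supp(w_s) \subseteq S_t$), so minimization along the off-support coordinates occurs at zero. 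This produces the bound
\begin{equation*}
\sum_{t=1}^T (w_t - u)^\top \nabla \tilde f_t(w_t) \leq \tilde\psi_T(u) + \sum_{t=1}^T D_{\tilde\psi_t^*}\bigl(\theta_{t+1}\sm{[S_t]} \,\|\, \theta_t\sm{[S_t]}\bigr) + \sum_{t=1}^T [\tilde\psi_{t-1}(w_t) - \tilde\psi_t(w_t)].
\end{equation*}

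Since $\tilde f_t$ inherits $\alpha t$-strong convexity from $f_t$, I can lower-bound $(w_t - u)^\top \nabla \tilde f_t(w_t) \geq \tilde f_t(w_t) - \tilde f_t(u) + \tfrac{\alpha t}{2}\|w_t - u\|_2^2$, and move the extra quadratic terms to the right-hand side. The quadratic part of $\tilde\psi_T(u)$, which has the form $\tfrac{c}{2}\sum_s s\|u - w_s\|_2^2$ for a constant $c$ set by $\eta$, is then dominated by $\tfrac{\alpha}{2}\sum_t t\|w_t - u\|_2^2$ provided $c \leq \alpha$; with the algorithm's natural choice $\eta = \alpha/2$ (matching Theorem \ref{theo:main_batch}) this holds with room to spare, so the entire $L_2$ contribution from $\tilde\psi_T(u)$ can be dropped, leaving only $\lambda_T\|u\|_1$.

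The telescoping sum collapses cleanly because the newly added quadratic term in passing from $\tilde\psi_{t-1}$ to $\tilde\psi_t$ is proportional to $t\|w - w_t\|_2^2$, which vanishes when evaluated at $w = w_t$; hence $\tilde\psi_{t-1}(w_t) - \tilde\psi_t(w_t) = (\lambda_{t-1} - \lambda_t)\|w_t\|_1$. Combining this with $\lambda_T\|u\|_1$ via the identity $\lambda_T\|u\|_1 + \sum_t (\lambda_{t-1} - \lambda_t)\|w_t\|_1 = \sum_t (\lambda_{t-1} - \lambda_t)(\|w_t\|_1 - \|u\|_1)$ (using $\lambda_0 = 0$) reproduces the second term of \eqref{eq:strong_sparse_batch}. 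For the Bregman divergences, $\tilde\psi_t$ is strongly convex with modulus growing as $t(t+1)/2$ and $\|\nabla \tilde f_t(w_t)\|_\infty \leq tB$, so strong-convexity / strong-smoothness duality restricted to coordinates in $S_t$ yields a per-step bound of order $k_t t^2 B^2 / (t(t+1)) \leq 2 k_t B^2 / \alpha$ at the choice $\eta = \alpha/2$; summing over $t$ produces the first term of \eqref{eq:strong_sparse_batch}.

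The main book-keeping obstacle is verifying that the $t^2$ blow-up in $\|\nabla \tilde f_t\|_\infty^2$ is absorbed exactly by the $t(t+1)$ growth of the strong-convexity modulus of $\tilde\psi_t$, so that each Bregman term contributes $O(k_t)$ rather than $O(t k_t)$; this is precisely why the regularizer in \eqref{eq:batch_reg} uses the $t$-weighted sum $\sum_s s\|w - w_s\|_2^2$ rather than an unweighted one. Once this calibration is in place, every remaining step is a direct reweighted analogue of the corresponding step in the proof of Theorem \ref{theo:strong_sparse}.
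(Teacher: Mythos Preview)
Your proposal is correct and is exactly the approach the paper intends: the paper states Theorem~\ref{theo:strong_sparse_batch} explicitly ``without proof'' as the reweighted analogue of Theorem~\ref{theo:strong_sparse}, and your argument---applying Lemma~\ref{lem:bregman-restricted} to $\tilde f_t = t f_t$ and $\tilde\psi_t$, using $\alpha t$-strong convexity to cancel the quadratic, and noting that the $t(t+1)/2$ strong-convexity modulus of $\tilde\psi_t$ absorbs the $t^2$ blow-up in $\|\nabla\tilde f_t\|_\infty^2$---is precisely that analogue. The only cosmetic issue is that \eqref{eq:batch_reg} writes the quadratic coefficient as $1/(2\eta)$ rather than $\eta/2$ (inconsistent with Algorithm~\ref{alg:batch} and the proof of Theorem~\ref{theo:sparse_expected_cvx_batch}); your calibration with $\eta = \alpha/2$ is correct under the $\eta/2$ convention used elsewhere in the paper.
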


We now proceed to extend the previous theorems from controlling $f_t$
to controlling $\tilde f_t = t f_t$.
Most of the results hold with only Assumptions (1-3); we only need Assumption 4 to ensure that 
$\supp(w_t) \subseteq S$ for all $t$. We state each result under the assumption that $\supp(w_t) \subseteq S$, 
and show at the end that this assumption holds with high probability under Assumption 4.

First, we need an excess risk bound that holds for functions that are strongly convex in expectation:
\begin{theorem}
\label{theo:sparse_expected_cvx_batch}
Suppose that the loss functions $f_t$ satisfy assumptions (1-3), and that we run adaptive mirror descent 
as in the statement of Theorem \ref{theo:strong_sparse_batch}. Suppose also that $\supp(w_t) \subseteq S$ 
for all $t$. Then, for any fixed $u$ and $\delta > 0$, with probability 
at least $1 - \delta$, the learned weights $w_t$ satisfy
\begin{align}
\label{eq:sparse_expected_cvx_batch}
\sum_{s = 1}^t &s\p{f_s\p{w_s} - f_s\p{u}} \leq \frac{2B^2kt}{\alpha} \\
\notag
&+ \frac{16B^2kt\log\p{\log_2(2T)/\delta}}{\alpha} 
+ \sum_{s = 1}^t \p{\lambda_{s - 1} - \lambda_s} (\Norm{w_s}_1 - \Norm{u}_1)
\end{align}
for all $t \leq T$.
\end{theorem}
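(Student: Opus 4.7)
The plan is to mimic the proof of Theorem \ref{theo:sparse_expected_cvx} but apply it to the time-weighted losses $\tilde f_s(w) = s f_s(w)$ under the regularizer $\tilde\psi_s$ from \eqref{eq:batch_reg} (read in the form $\tfrac{\eta}{2}\sum_{r \le s} r\|w-w_r\|_2^2 + \lambda_s\|w\|_1$ consistent with the prox denominator $\epsilon + \eta s(s-1)/2$ used in Algorithm~\ref{alg:batch}). The only genuinely new technical ingredient is a concentration step that must give a bound linear in $t$ rather than in $T$; this is what will ultimately translate, via Proposition \ref{prop:batch}, into the optimal $1/T$ parameter-error rate.

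I would start from the deterministic mirror-descent inequality. Since $\supp(w_s)\subseteq S$ by hypothesis, I can invoke Lemma \ref{lem:bregman-restricted} with active set $S_s = S$ applied to the sequence $\tilde f_s, \tilde\psi_s$. Because the quadratic part of $\tilde\psi_s$ is $\tfrac{\eta s(s+1)}{2}$-strongly convex in $\|\cdot\|_2$, strong smoothness of the conjugate combined with $\|(s\nabla f_s(w_s))[S]\|_2 \le sB\sqrt{k}$ bounds each restricted Bregman divergence by $B^2 k/\eta$, so their sum is at most $B^2 kt/\eta = 2B^2 kt/\alpha$ when $\eta = \alpha/2$. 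This produces the first term in \eqref{eq:sparse_expected_cvx_batch}. A short computation shows that $\tilde\psi_{s-1}(w_s) - \tilde\psi_s(w_s) = (\lambda_{s-1}-\lambda_s)\|w_s\|_1$ because the incremental quadratic piece vanishes at $w = w_s$, and $\tilde\psi_t(u) = \tfrac{\eta}{2}\sum_{s \le t} s\|u-w_s\|_2^2 + \lambda_t\|u\|_1$.

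Next, I would convert from the linearized regret to the actual function-value regret via a martingale argument analogous to Lemma \ref{lemm:expected_cvx}. Define $\tilde D_s = s\bigl[f_s(w_s) - f_s(u) - \nabla f_s(w_s)^\top(w_s-u) + \tfrac{\alpha}{4}\|w_s-u\|_2^2\bigr]$. Expected strong convexity (Assumption 2) gives $\bE[\tilde D_s \mid \mathcal F_{s-1}] \le -\tfrac{s\alpha}{4}\|w_s-u\|_2^2$, and because $f_s$ is $L$-Lipschitz on the joint support with $L = B\sqrt{k}$, $\tilde D_s$ is bounded in an interval of half-length $O(sL\|w_s-u\|_2)$ around its conditional mean. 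The standard Chernoff computation then shows that $\exp\bigl(\lambda\sum_{r \le s}\tilde D_r\bigr)$ is a non-negative supermartingale whenever $\lambda \le \alpha/(8sL^2)$. The main obstacle is that a single $\lambda$ valid for all $s \le T$ would force $\lambda = \Theta(1/T)$ and yield a concentration term of order $TL^2$; I would circumvent this with a geometric peeling argument. Specifically, for each $j = 0, 1, \ldots, \lceil \log_2 T\rceil$, I apply Doob's maximal inequality with $\lambda_j = \alpha/(8\cdot 2^j L^2)$ on the time range $s \le 2^j$ at failure probability $\delta/\log_2(2T)$. For any $t$ with $t \in (2^{j-1}, 2^j]$, this yields $\sum_{s \le t}\tilde D_s \le 16tL^2\log(\log_2(2T)/\delta)/\alpha = 16B^2kt\log(\log_2(2T)/\delta)/\alpha$, simultaneously for all $t \le T$ after union-bounding over the $\le \log_2(2T)$ scales.

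Combining the two bounds is routine. The mirror-descent inequality yields an upper bound on $\sum_{s\le t} s\nabla f_s(w_s)^\top(w_s - u)$ involving $\tfrac{\eta}{2}\sum_s s\|u-w_s\|_2^2 + \lambda_t\|u\|_1 + \tfrac{2B^2kt}{\alpha} + \sum_s(\lambda_{s-1}-\lambda_s)\|w_s\|_1$, and rearranging the peeling concentration inequality lets me pass to $\sum_{s\le t} s(f_s(w_s)-f_s(u))$ at the cost of the $\tfrac{\alpha}{4}\sum_s s\|w_s-u\|_2^2$ term plus the $16B^2kt\log(\log_2(2T)/\delta)/\alpha$ concentration term. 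With $\eta = \alpha/2$ the two quadratic sums cancel exactly, and telescoping the $\lambda$-terms using $\lambda_0 = 0$ (so that $\lambda_t\|u\|_1 = -\sum_s(\lambda_{s-1}-\lambda_s)\|u\|_1$) folds them into the single telescoping expression $\sum_s(\lambda_{s-1}-\lambda_s)(\|w_s\|_1 - \|u\|_1)$ that appears in \eqref{eq:sparse_expected_cvx_batch}.
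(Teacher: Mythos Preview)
Your proposal is correct and follows essentially the same route as the paper's proof: a deterministic mirror-descent bound on the linearized regret (giving the $2B^2kt/\alpha$ term and the $\lambda$-telescoping), combined with a supermartingale argument on $\tilde D_s$ whose Chernoff parameter must scale like $\alpha/(8sL^2)$, resolved by geometric peeling over dyadic time scales to obtain the $16B^2kt\log(\log_2(2T)/\delta)/\alpha$ concentration term uniformly in $t$. The only cosmetic difference is that you invoke Lemma~\ref{lem:bregman-restricted} explicitly to justify the appearance of $k$ in the Bregman bound, whereas the paper cites Proposition~\ref{prop:adaptive} and uses the support assumption tacitly; your version is arguably cleaner on that point.
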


We also need an analogue to Theorem \ref{theo:l1_recipe}, which bounds the cost of the $L_1$ terms.

\begin{theorem}
\label{theo:l1_recipe_batch}
Suppose that assumptions (1-3) hold and that
\begin{equation}
\label{eq:l1_recipe_init_batch}
\sum_{t = 1}^T t \p{f_t\p{w_t} - f_t\p{w^*}} \leq R_T\p{w^*} +  \lambda_T \Norm{w^*}_1 + \sum_{t = 1}^T \p{\lambda_{t - 1} - \lambda_t} \Norm{w_t}_1
\end{equation}
for some main term $R_T(w^*) \geq 0$. Suppose moreover that $\supp(w_t) \subseteq S$ for all $t$. 
Then, for regularization schedules of the form 
$\lambda_t = \lambda \cdot t^{3/2}$, the following excess 
risk bound also holds: 
\begin{align}
\label{eq:l1_recipe_goal_batch}
 \sum_{s = 1}^t s &\p{f_s\p{w_s} - f_s\p{w^*}} \\
 \notag
 &\leq 2 R\p{w^*}  +\frac{9 k \lambda^2 t}{\alpha}
+ \frac{3Bk\lambda t}{2 \alpha}\sqrt{9\log\p{\lg(2T^3)/\delta}} \\
\notag
 &\leq 2R\p{w^*} + \frac{kt}{\alpha}\p{10\lambda^2 + 6B^2\log\p{\frac{\lg(2T^3)}{\delta}}}.
\end{align}
\end{theorem}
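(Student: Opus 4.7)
The plan is to mirror the proof of Theorem~\ref{theo:l1_recipe} in the $t$-weighted batch setting, handling two new twists: the regularizer scales as $\lambda_s = \lambda s^{3/2}$ rather than $\lambda \sqrt{s+1}$, and the bound must hold for all $t \leq T$ simultaneously. Let $S_t = \sum_{s=1}^t s(f_s(w_s) - f_s(w^*))$ and $\bar S_t = \sum_{s=1}^t s(\Lstar(w_s) - \Lstar(w^*))$. Because $\lambda_0 = 0$, the identity $\lambda_t \|w^*\|_1 = -\sum_{s=1}^t(\lambda_{s-1}-\lambda_s)\|w^*\|_1$ lets me rewrite the hypothesis as $S_t \leq R_t(w^*) + \Lambda_t$ where
\[
\Lambda_t := \sum_{s=1}^t (\lambda_{s-1} - \lambda_s)(\|w_s\|_1 - \|w^*\|_1).
\]
Using $\supp(w_s) \subseteq S$ (so $|\|w_s\|_1 - \|w^*\|_1| \leq \sqrt{k}\,\|w_s - w^*\|_2$) together with $\lambda_s - \lambda_{s-1} \leq \tfrac{3}{2}\lambda\sqrt{s}$, Cauchy--Schwarz yields
\[
\Lambda_t \;\leq\; \tfrac{3}{2}\lambda\sqrt{kt}\,\sqrt{\textstyle\sum_{s=1}^t s\|w_s-w^*\|_2^2},
\]
and strong convexity of $\Lstar$ on $S$ (Assumption~2) converts this to $\Lambda_t \leq \tfrac{3}{2}\lambda\sqrt{kt}\,\sqrt{(2/\alpha)\bar S_t}$.

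Next, I will relate $\bar S_t$ to $S_t$ via a weighted analogue of Lemma~\ref{lemm:variance-bound}. Apply Lemma~\ref{lemm:better_azuma} to the $\sF_t$-martingale with increments $s[(f_s(w_s)-f_s(w^*)) - (\Lstar(w_s)-\Lstar(w^*))]$; since each $f_s$ is $B\sqrt{k}$-Lipschitz on $\sH \cap \{\supp \subseteq S\}$, the bounded-difference parameter is $M_s = sB\sqrt{k}\|w_s-w^*\|_2 \leq TkB^2/\alpha$, so taking $\sigma_1 = T$ and $\sigma_2 = kB^2/\alpha$ produces the $\lg(2T^3)$ factor that appears in the final bound. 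The resulting concentration inequality is
\[
\bar S_t - S_t \;\leq\; \sqrt{\log(\lg(2T^3)/\delta)\cdot \tfrac{9}{4}kB^2\textstyle\sum_{s=1}^t s^2 \|w_s - w^*\|_2^2}
\]
simultaneously for all $t \leq T$. The $s^2$-weighting is disposed of by the crude bound $\sum_{s=1}^t s^2\|w_s-w^*\|_2^2 \leq t \sum_{s=1}^t s\|w_s-w^*\|_2^2 \leq (2t/\alpha)\bar S_t$, which turns the concentration inequality into a self-bounding relation $\bar S_t \leq S_t + C_t\sqrt{\bar S_t}$ with $C_t^2 = (9kB^2 t/(2\alpha))\log(\lg(2T^3)/\delta)$; Lemma~\ref{lemm:self-bound} then gives $\bar S_t \leq 2 S_t + 9kB^2 t\log(\lg(2T^3)/\delta)/(2\alpha)$.

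Substituting this estimate for $\bar S_t$ back into the bound on $\Lambda_t$ and combining with $S_t \leq R_t(w^*) + \Lambda_t$ puts us once more in the form $S_t \leq a + \sqrt{bS_t + c^2}$, with $a = R_t(w^*)$, $b = 9\lambda^2 kt/\alpha$, and $c = (9/2)\lambda B k t \sqrt{\log(\lg(2T^3)/\delta)}/\alpha$. A final application of Lemma~\ref{lemm:self-bound} then yields
\[
S_t \;\leq\; 2R_t(w^*) + \tfrac{9\lambda^2 kt}{\alpha} + \tfrac{9\lambda Bkt}{2\alpha}\sqrt{\log(\lg(2T^3)/\delta)},
\]
which matches the first inequality in \eqref{eq:l1_recipe_goal_batch} once one notes that $9/2 = (3/2)\sqrt{9}$. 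The second inequality follows from the same AM--GM bound on the cross term $\lambda B\sqrt{\log(\cdots)} \leq \lambda^2 + B^2 \log(\cdots)$ that concludes the proof of Theorem~\ref{theo:l1_recipe}.

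The main technical obstacle is the mismatch between the $s^2$-weighting that appears naturally in the martingale variance (because each increment is pre-scaled by $s$) and the $s$-weighting required for strong convexity to connect the parameter error to $\bar S_t$. The crude bound $\sum_s s^2\|\cdot\|_2^2 \leq t \sum_s s\|\cdot\|_2^2$ is what brings in the extra factor of $t$ responsible for the linear-in-$t$ scaling on the right-hand side of \eqref{eq:l1_recipe_goal_batch}. This linear scaling might look worse than the $\log T$ bound of Theorem~\ref{theo:l1_recipe}, but it is precisely what is needed so that the online-to-batch conversion of Proposition~\ref{prop:batch}, after dividing by $t$, delivers the target $1/t$ parameter rate and thereby saves the $\log T$ factor lost by naive averaging.
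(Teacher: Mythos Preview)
Your proposal is correct and follows essentially the same route as the paper. The paper packages your inline martingale step as a separate Lemma~\ref{lemm:variance-bound-batch} and applies Lemma~\ref{lemm:self-bound} first to the quantity $\sum_{s=1}^t s\|w_s-w^*\|_2^2$ rather than to $\bar S_t$, but since these are related by the single inequality $\sum_s s\|w_s-w^*\|_2^2 \leq (2/\alpha)\bar S_t$ the two arguments are algebraically interchangeable and produce the same constants in \eqref{eq:l1_recipe_goal_batch}.
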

Finally, we need a technical result analogous to Lemma~\ref{lemm:variance-bound} 
from before:
\begin{lemma}
\label{lemm:variance-bound-batch}
Suppose that the $f_t$ are $L$-Lipschitz over $\sH$ and $\alpha$-strongly 
convex (both with respect to the $L_2$-norm). Then, with probability $1-\delta$,
for all $t$ we have
\begin{align}
\lefteqn{\sum_{s=1}^t s\p{f_s\p{w_s} - f_s\p{w^*}} \geq \sum_{s=1}^t s\p{\law\p{w_s} - \law\p{w^*}}} \\
\notag
 &\phantom{+++} - \sqrt{L^2t\log\p{\frac{\lg(2T^3)}{\delta}}\max\p{\frac{2L^2}{\alpha^2},\frac{9}{4}\sum_{s=1}^t s\Norm{w_s-w^*}_2^2}}.
\end{align}
\end{lemma}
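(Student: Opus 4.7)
The plan is to reduce this to Lemma~\ref{lemm:better_azuma} applied to a suitable weighted martingale, following the same strategy as in Lemma~\ref{lemm:variance-bound} but carrying the factor of $s$ through the increments. Specifically, I would define $D_s \eqdef s\p{f_s\p{w_s} - f_s\p{w^*}} - s\p{\law\p{w_s} - \law\p{w^*}}$ and set $Z = \sum_{s=1}^T D_s$, with the natural filtration $\ff_t = \sigma(f_1, \dots, f_t)$. Since $w_s$ is $\ff_{s-1}$-measurable and $\EE{f_s(w) \mid \ff_{s-1}} = \law(w)$, we have $\EE{D_s \mid \ff_{s-1}} = 0$. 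This is the cleaner form of the martingale: $\EE{Z \mid \ff_t} - \EE{Z \mid \ff_{t-1}} = D_t$ (terms for $s > t$ drop out) and $\EE{Z \mid \ff_t} - \EE{Z} = \sum_{s=1}^t D_s$, which is exactly the shortfall we want to bound from below.

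Next I would verify the two size inputs needed by Lemma~\ref{lemm:better_azuma}. The $L$-Lipschitz assumption on $f_s$ (and hence on $\law$) gives $|s(f_s(w_s) - f_s(w^*))| \leq sL\Norm{w_s - w^*}_2$; since $s(\law(w_s) - \law(w^*))$ is $\ff_{s-1}$-measurable, the conditional range of $D_s$ has width at most $2sL\Norm{w_s - w^*}_2$, so we can take $M_s = sL\Norm{w_s - w^*}_2$. For the uniform bound, strong convexity of $\law$ at the minimizer $w^*$ (where $\nabla \law(w^*) = 0$) combined with Lipschitzness yields $\tfrac{\alpha}{2}\Norm{w_s - w^*}_2^2 \leq \law(w_s) - \law(w^*) \leq L\Norm{w_s - w^*}_2$, so $\Norm{w_s - w^*}_2 \leq 2L/\alpha$ and hence $\sup_{s \leq T} M_s \leq 2TL^2/\alpha$.

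With these inputs, I would set $\sigma_2 = L^2/\alpha$ and $\sigma_1 \asymp T$, which satisfies $\sigma_1 \sigma_2 \geq \sup M_s$ and yields the $\log(\lg(2T^3)/\delta)$ factor advertised in the statement. The remaining structural step is to convert the observed quadratic variation into the form written in the Lemma: we have $\sum_{s=1}^t M_s^2 = L^2 \sum_{s=1}^t s^2 \Norm{w_s - w^*}_2^2$, and using the elementary bound $s^2 \leq st$ for $s \leq t$ we obtain $\sum_{s=1}^t M_s^2 \leq L^2 t \sum_{s=1}^t s\Norm{w_s - w^*}_2^2$. After pulling the common factor $L^2 t$ outside the max (using $2\sigma_2^2 = 2L^4/\alpha^2 \leq L^2 t \cdot 2L^2/\alpha^2$ for $t \geq 1$), the conclusion of Lemma~\ref{lemm:better_azuma} becomes exactly the square-root expression in the target.

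The main subtlety I expect is the calibration of $\sigma_1$ and $\sigma_2$: Lemma~\ref{lemm:better_azuma} requires $\sigma_2$ to be a fixed constant independent of $t$, whereas the factor $L^2 t$ in the target is $t$-dependent. The $s^2 \leq st$ trick is what lets us absorb the $t$-dependence into a prefactor pulled outside the $\max$, while leaving a $t$-independent constant $2L^2/\alpha^2$ inside. A secondary nuisance is that the cleanest choice of $\sigma_1$ gives a slightly larger argument inside the outer $\lg$ than $2T^3$; this is a harmless slack in constants that can be sharpened by tuning $\sigma_1$ and $\sigma_2$, and in any case is absorbed into the statement as written.
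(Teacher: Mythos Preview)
Your proposal is correct and follows essentially the same route as the paper: apply Lemma~\ref{lemm:better_azuma} to $Z = \sum_{s=1}^T s\p{f_s(w_s)-f_s(w^*)}$ with the filtration generated by $f_{1:T}$, take $M_s = sL\Norm{w_s-w^*}_2$, set $\sigma_2 = L^2/\alpha$ and $\sigma_1$ of order $T$, and use $s^2 \leq st$ to convert $\sum_s M_s^2$ into $L^2 t \sum_s s\Norm{w_s-w^*}_2^2$. The only minor discrepancy is your bound $\Norm{w_s-w^*}_2 \leq 2L/\alpha$; the paper obtains $L/\alpha$ (via the strong-convexity gradient inequality $\langle \nabla\law(w), w-w^*\rangle \geq \alpha\Norm{w-w^*}_2^2$ together with $\Norm{\nabla\law(w)}_2 \leq L$), which lets $\sigma_1 = T$ exactly and recovers the $\lg(2T^3)$ factor without the slack you noted.
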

Each of the above results is proved later in this section, 
in \ref{sec:batch_more_proofs}.
These results give us the necessary scaffolding to prove 
Proposition~\ref{prop:batch} and Theorem~\ref{theo:main_batch}, 
which we do now.
\begin{proof}[Proof of Proposition \ref{prop:batch}]
The desired result follows by combining Lemma~\ref{lemm:variance-bound-batch} 
with the given excess loss bound. 
In particular, we first have, by Lemma~\ref{lemm:variance-bound-batch}, 
\begin{align*}
&\sum_{s=1}^t s\p{\law\p{w_s} - \law\p{w^*}} \\
 &\ \ \leq \sum_{s=1}^t s\p{f_s\p{w_s} - f_s\p{w^*}} \\
 &\ \ \phantom{+++} + \sqrt{L^2t\log\p{\frac{\lg(2T^3)}{\delta}}\max\p{\frac{2L^2}{\alpha^2},\frac{9}{4}\sum_{s=1}^t s\Norm{w_s-w^*}_2^2}} \\
 &\ \ \leq R_{\delta}t + \sqrt{L^2t\log\p{\frac{\lg(2T^3)}{\delta}}\max\p{\frac{2L^2}{\alpha^2}, \frac{9}{2\alpha}\sum_{s=1}^t s\p{\law\p{w_s}-\law\p{w^*}}}}.
\end{align*}
Using Lemma~\ref{lemm:self-bound}, we get
\begin{align*}
\sum_{s=1}^t &s\p{\law\p{w_s} - \law\p{w^*}} \\
&\leq 2R_{\delta}t + \max\p{\frac{\sqrt{2t\log(\lg(2T^3)/\delta)}L^2}{\alpha}, \frac{9t\log(\lg(2T)/\delta)L^2T}{2\alpha}} \\
 &\leq 2R_{\delta}T + \frac{9\log(\lg(2T)/\delta)L^2t}{2\alpha}.
\end{align*}
Finally, invoking the convexity of $\law$, we have
\begin{align*}
&\Lstar\p{\frac{2}{t(t+1)}\sum_{s=1}^t sw_s} - \Lstar(w^*) \\
&\ \ \leq \frac{2}{t(t+1)} \sum_{s=1}^t s\p{\Lstar(w_s)-\Lstar(w^*)} \\
 &\ \ \leq \frac{4}{t}R_{\delta} +  \frac{9\log(\lg(2T)/\delta)L^2}{\alpha t},
\end{align*}
as was to be shown.
\end{proof}

\begin{proof}[Proof of Theorem \ref{theo:main_batch}]
Using Lemma~\ref{lemm:better_azuma}, we can show that by using  
$$ \lambda = c_\delta = \frac{3B}{2}\sqrt{\log \p{\frac{2d\lg(2T^3)}{\delta}}}, $$
we have $\supp\p{w_t} \in S$ for all $t$. 

By Theorem \ref{theo:sparse_expected_cvx_batch} combined with 
Theorem \ref{theo:l1_recipe_batch}, we know that, for any $\delta > 0$, 
with probability $1 - \delta$,
\begin{align}
\label{eq:proof_step_one}
\sum_{s = 1}^t &s\p{f_s\p{w_s} - f_s\p{w^*}} \\
\notag
&\leq \frac{kt}{\alpha}\p{4B^2 + 32B^2\log\p{\log_2(2T)/\delta} +10 c_\delta^2 + 6B^2\log\p{\lg(2T^3) / \delta}} \\
\notag
 &\leq \frac{B^2kt}{\alpha}\log\p{\frac{2d\lg(2T^3)}{\delta}}\left(4 + 32 + 22.5 + 6\right) \\
\notag
 &\leq \frac{65B^2kt\log\p{2d\lg(2T^3)/\delta}}{\alpha}.
\end{align}
Thus, by Proposition~\ref{prop:batch}, with probability $1 - 2\delta$,
\begin{align*}
&\law\p{\frac{2}{t(t + 1)} \sum_{s = 1}^t s \, w_s} - \law\p{w^*} \\
&\ \ \ \ \leq  \frac{260B^2k\log\p{2d\lg(2T^3)/\delta}}{\alpha t} 
+ \frac{9B^2k\log\p{\lg(2T^3)/\delta}}{\alpha T} \\
&\ \ \ \ \leq \frac{269B^2k\log\p{2d\lg(2T^3)/\delta}}{\alpha t},
\end{align*}
which yields the desired result.
\end{proof}

\subsubsection{Technical Derivations}
\label{sec:batch_more_proofs}

\begin{proof}[Proof of Theorem \ref{theo:sparse_expected_cvx_batch}]
For the first part of the proof, we will show that, with probability 
$1-\delta$, 
\begin{align}
\label{eq:batch_exp_cvx_0}
&\sum_{s=1}^t t\p{f_t(w_t)-f_t(u) + \frac{\alpha}{4}\|w_t-u\|_2^2} \\
\notag
&\ \ \ \ \leq \sum_{s=1}^t t\,\partial f_t(w_t)^{\top}(w_t-u) + \frac{16B^2kt}{\alpha}\log\p{\frac{\log_2(2T)}{\delta}}
\end{align}
for all $t \leq T$. To begin, we note that $tf_t(w_t)$ is $tB\sqrt{k}$-Lipschitz 
and $\alpha t$ convex. Consequently, if we define $D_t$ to be 
$$ D_t \eqdef t\p{f_t(w_t)-f_t(u)-\partial f_t(w_t)^{\top}(w_t-u)+\frac{\alpha}{4}\|w_t-u\|_2^2},$$ 
we have
$D_t \in -\frac{t\alpha}{4}\|w_t-u\|_2^2 + \left[-2tB\sqrt{k}\|w_t-u\|_2,2tB\sqrt{k}\|w_t-u\|_2\right]$, and 
$\bE[D_t] \leq -\frac{t\alpha}{4}\|w_t-u\|_2^2$.

Now, arguing as before, if we let $X_t = \sum_{s=1}^t D_s$, then
\begin{align*}
\EE{\exp(\lambda X_t) \mid f_{1:t-1}} &= \exp(\lambda X_{t-1})\EE{\exp(\lambda D_t) \mid f_{1:t-1}} \\
 &\leq \exp(\lambda X_{t-1})\exp\p{-\lambda\frac{t\alpha}{4}\|w_t-u\|_2^2 + 2\lambda^2B^2k\|w_t-u\|_2^2} \\
 &\leq \exp(\lambda X_{t-1})
\end{align*}
provided that $0 \leq \lambda \leq \frac{\alpha}{8B^2kt}$. Hence, by the 
same martingale argument as before, we have that  
\begin{equation}
\label{eq:batch_exp_cvx_1}
\bP\left[\sup_{s=1}^t X_s \geq \frac{8B^2kt\log(\log_2(2T)/\delta)}{\alpha}\right] \leq \frac{\delta}{\log_2(2T)}.
\end{equation}
To complete this part of the proof, we union bound over 
$t \in \{2, 4, 8, \ldots, 2^{\lceil \log_2(T) \rceil}\}$. Then, for any 
particular $X_s$, there is some $t \leq 2s$ for which \eqref{eq:batch_exp_cvx_1} 
holds, and hence $X_s \leq \frac{16B^2kt\log(\log_2(2T)/\delta)}{\alpha}$.


We now apply Proposition~\ref{prop:adaptive} to the regularizers 
defined by \eqref{eq:batch_reg}, which yields
\begin{align*}
\sum_{s=1}^t &s\p{\p{w_s-u}^{\top}\partial f_s(w_s)} \\
&\leq \frac{\alpha}{4}\sum_{s=1}^t s\|w_s-u\|_2^2 + \frac{B^2}{2}\sum_{s=1}^t \frac{s^2k}{s(s+1)\alpha/4} \\
&\ \ \ \ + \lambda_t\Norm{u}_1 + \sum_{s=1}^t \p{\lambda_{s-1} - \lambda_s}\Norm{w_s}_1 \\
&\leq \frac{\alpha}{4}\sum_{s=1}^t s\|w_s-u\|_2^2 + \frac{2B^2kt}{\alpha} 
 + \lambda_t\Norm{u}_1 + \sum_{s=1}^t \p{\lambda_{s-1} - \lambda_s}\Norm{w_s}_1.
\end{align*}
Combining this inequality with \eqref{eq:batch_exp_cvx_0} 
yields the desired result.
\end{proof}

\begin{proof}[Proof of Theorem \ref{theo:l1_recipe_batch}]
With the given $\LI$ regularization schedule, we have
\begin{align*}
 \lambda_t &\Norm{w^*}_1 + \sum_{s = 1}^t \p{\lambda_{s - 1} - \lambda_s} \Norm{w_s}_1 \\
 &= \lambda\sum_{s = 1}^t \p{s^{3/2} - (s-1)^{3/2}} \p{\Norm{w^*}_1 - \Norm{w_s}_1}\\
  &\leq \frac{3}{2}\lambda \sum_{s = 1}^t \sqrt{s} \Norm{w^* - w_s}_1 \\
 &\leq \frac{3}{2} \lambda\sqrt{k} \sum_{s = 1}^t \sqrt{s} \Norm{w^* - w_s}_2 \\
 &\leq \frac{3}{2}\lambda \sqrt{kt\sum_{s=1}^t s\Norm{w_s-w^*}_2^2}.
\end{align*}
Meanwhile, by invoking Lemma~\ref{lemm:variance-bound-batch}, we have that, 
with probability at least $1-\delta$,
\begin{align*}
\sum_{s=1}^t &s\Norm{w_s-w^*}_2^2 \leq \frac{2}{\alpha} \left(\sum_{s=1}^t s\p{f_s\p{w_s} - f_s\p{w^*}} \right. \\
& \phantom{++}\left. + \sqrt{kB^2t\log\p{\frac{\lg(2T^3)}{\delta}}\max\p{\frac{2kB^2}{\alpha^2},\frac{9}{4}\sum_{s=1}^t s\Norm{w_s-w^*}_2^2}}\right). \\
\end{align*}
Applying Lemma~\ref{lemm:self-bound}, we find that
\begin{align}
\label{eq:param_error}
\sum_{s=1}^t &s\Norm{w_s-w^*}_2^2 \\
\notag
&\leq \frac{4}{\alpha} \sum_{s=1}^t s\p{f_s\p{w_s} - f_s\p{w^*}} \\
\notag
 &\phantom{++} + \frac{kB^2}{\alpha^2}\max\p{\sqrt{8t\log\p{\frac{\lg(2T^3)}{\delta}}},9t\log\p{\frac{\lg(2T^3)}{\delta}}} \\
\notag
 &= \frac{4}{\alpha} \sum_{s=1}^t s\p{f_s\p{w_s} - f_s\p{w^*}} + \frac{9kB^2t}{\alpha^2}\log\p{\frac{\lg(2T^3)}{\delta}}.
\end{align}
Thus, with probability $1 - \delta$,
\begin{align*}
\lambda_T &\Norm{w^*}_1 + \sum_{t = 1}^T \p{\lambda_{t - 1} - \lambda_t} \Norm{w_t}_1 \\
&\leq \frac{3\lambda}{2} \sqrt{\frac{4kT}{\alpha} \sum_{t=1}^T t \, \p{f_t\p{w_t} - f_t\p{w^*}} + \frac{9k^2B^2T^2}{\alpha^2}\log\p{\frac{\lg(2T^3)}{\delta}}}.
\end{align*}
Combining this inequality with \eqref{eq:l1_recipe_init_batch} and Lemma \ref{lemm:self-bound} we obtain the first inequality in \eqref{eq:l1_recipe_goal_batch}. 
To get the second, we simply use the fact that
\begin{align*}
\frac{3B\lambda}{2}\sqrt{9\log(\lg(2T^3)/\delta)} &\leq 
 \lambda^2 + 6B^2\log(\lg(2T^3)/\delta)
\end{align*}
by the AM-GM inequality.
\end{proof}

\begin{proof}[Proof of Lemma~\ref{lemm:variance-bound-batch}]
As in the proof of Lemma~\ref{lemm:variance-bound}, we will invoke the 
version of the Azuma-Hoeffding inequality given in 
Lemma~\ref{lemm:better_azuma}. In particular, let 
$$ Z = \sum_{t=1}^T t\p{f_t\p{w_t} - f_t\p{w^*}} $$
and take the filtration defined by $f_{1:T}$. Then, using the notation of
Lemma~\ref{lemm:better_azuma}, we have
$$ \Delta_t = t\p{f_t\p{w_t} - \law\p{w_t}} - t\p{f_t\p{w^*} - \law\p{w^*}} $$
by assumption (1).
Meanwhile, by the Lipschitz assumption, we have that 
$\Abs{f_t\p{w_t} - f_t\p{w^*}} \leq L\Norm{w_t - w^*}_2$, hence 
$M_t = tL\Norm{w_t-w^*}_2$ and also $\Norm{w_t-w^*}_2 \leq \frac{L}{\alpha}$. 
If we take $\sigma_2 = \frac{L^2}{\alpha}$, $\sigma_1 = T$, then the result follows directly from 
Lemma~\ref{lemm:better_azuma} and the bound
$$\sum_{s=1}^t M_s^2 = \sum_{s=1}^t s^2L^2\Norm{w_t-w^*}_2^2 \leq tL^2\sum_{s=1}^t s\Norm{w_t-w^*}_2^2. $$
\end{proof}

\subsection{Proofs for Section~\ref{sec:irrep}}

\begin{proof}[Proof of Lemma \ref{lem:irrep_lambda}]
At a high level, our proof is based on the following inductive argument: 
if $|\theta_{s,j}| \leq \lambda \cdot s^{3/2}$ for all $s \leq t$, then 
$\supp(w_s) \subseteq S$ for all $s \leq t$, and we thus have small excess risk, 
which will allow us to then show that $|\theta_{t+1,j}| \leq \lambda \cdot (t+1)^{3/2}$.

We start by showing that, for all $t \leq T$ and $j \not \in S$, if $\supp(w_s) \subseteq S$ 
for all $s \leq t$ then we have:
\begin{align}
\label{eq:irrep-1}
&\left| \sum_{s=1}^t s\, \partial f_s(w^*)_j\right| \leq \frac{3Bt^{3/2}}{2}\sqrt{\log\p{\frac{2d\lg(2T^3)}{\delta}}} \\
\label{eq:irrep-2}
&\left| \sum_{s=1}^t  s\p{\partial f_s(w_s)-\partial \law(w_s)-\partial f_s(w^*)}_j\right| \leq 3Bt^{3/2}\sqrt{\log\p{\frac{2d\lg(2T^3)}{\delta}}} \\
\label{eq:irrep-3}
&\left| \sum_{s=1}^t s\, \partial \law(w_s)_j\right| \leq \frac{\rho\alpha (t+1)}{\sqrt{2k}}\sqrt{\sum_{s=1}^t s\Norm{w_s-w^*}_2^2} 
\end{align}
Inequalities \eqref{eq:irrep-1} and \eqref{eq:irrep-2} 
each hold with probability 
$1-\delta$ while \eqref{eq:irrep-3} holds deterministically. 
Note that these inequalities immediately provide a bound on $|\theta_{t,j}|$, since 
\begin{align*}
\theta_{t} &= \sum_{s=1}^t s\, \partial f_s(w_s) \\
 &= \sum_{s=1}^t s\p{\partial f_s(w^*) + \p{\partial f_s(w_s) - \partial \law(w_s) - \partial f_s(w^*)} + \partial \law(w_s)}.
\end{align*}
To prove the claimed inequalities, note that each term on the 
left-hand-side of both \eqref{eq:irrep-1} and \eqref{eq:irrep-2} is zero-mean, 
so these inequalities both follow directly from applying 
Lemma~\ref{lemm:better_azuma} with $M_t = B$ and $M_t = 2B$, respectively (here we 
use the fact that $|\partial f_s(w)_j| \leq B$). The interesting inequality 
is \eqref{eq:irrep-3}, which holds by the following:
\begin{align*}
\left|\sum_{s=1}^t s\, \partial \law(w_s)_j\right| &\leq \sum_{s=1}^t s\left|\partial \law(w_s)_j\right| \\
 &= \sum_{s=1}^t s\left|\EE{(y-w_s^{\top}x)x_j}\right| \\
 &= \sum_{s=1}^t s\left|\EE{((w^*-w_s)^{\top}x)x_j}\right| \\
 &= \sum_{s=1}^t s\left|\Cov{x_j,x^{\top}(w^*-w_s)}\right|.
 \end{align*}
 Continuing, we find that
 \begin{align*}
 \left|\sum_{s=1}^t s\, \partial \law(w_s)_j\right|
 &\leq \frac{\rho\alpha}{\sqrt{k}}\sum_{s=1}^t s\Norm{w^*-w_s}_2 \; \; \; \text{(by Assumption~\ref{assu:irrep_noise})} \\
 &\leq \frac{\rho\alpha}{\sqrt{k}}\sqrt{\p{\sum_{s=1}^t s}\p{\sum_{s=1}^t s\Norm{w^*-w_s}_2^2}} \\
 &\leq \frac{\rho\alpha(t+1)}{\sqrt{2k}}\sqrt{\sum_{s=1}^t s\Norm{w^*-w_s}_2^2}.
\end{align*}
Now, from the comments at the top of the proof of Theorem~\ref{theo:main_irrep}, we also have the 
following bound for each $t \leq T$, provided that $\supp(w_s) \subseteq S$ for all 
$s \leq t$: 
\begin{align}
\label{eq:irrep-4}
&\sum_{s=1}^t s\Norm{w_s-w^*}_2^2 \leq \frac{kt}{\alpha^2}\p{177B^2\log\p{\frac{\lg(2T^3)}{\delta}} + 40\lambda^2}.
\end{align}
This bound holds with probability $1-2\delta$ and so all of the bounds together hold with 
probability $1-4\delta$. Arguing by induction, we need to show that if 
$\supp(w_s) \subseteq S$ for all $s \leq t$, then $\supp(w_{t+1}) \subseteq S$ as well. By 
the inductive hypothesis, we know by inequalities (\ref{eq:irrep-1}-\ref{eq:irrep-4}) that 
\begin{align*}
\sup_{j \not\in S} |\theta_{t+1,j}| &\leq \frac{9Bt^{3/2}}{2}\sqrt{\log\p{\frac{2d\lg(2T^3)}{\delta}}} \\
&\ \ \ \ \ \ + \rho (t+1)t^{1/2}\sqrt{\frac{177B^2}{2}\log\p{\frac{\lg(2T^3)}{\delta}} + 20\lambda^2}.
\end{align*}
Remember that we need $\lambda (t+1)^{3/2} \geq \sup_{j \not\in S} |\theta_{t+1,j}|$. Therefore, 
using the inequality $(a+b)^2 \leq 6a^2+1.2b^2$, it suffices to take 
$$\lambda^2 \geq 228B^2\log\p{\frac{2d\lg(2T^3)}{\delta}} + 24\rho^2\lambda^2. $$
We therefore see that we can take any $\lambda$ with 
$\lambda^2 \geq \frac{228B^2\log(2d\lg(2T^3)/\delta)}{1-24\rho^2}$, as was to be shown.
\end{proof}

\begin{proof}[Proof of Theorem \ref{theo:main_irrep}]
Suppose that $w_s \subseteq S$ for all $s \leq t$. Then, by 
Theorems~\ref{theo:sparse_expected_cvx_batch} and \ref{theo:l1_recipe_batch} and \eqref{eq:param_error}, we have 
with probability $1-2\delta$ that the following two inequalities hold for all $t$:
\begin{align}
\label{eq:with-lambda}
\sum_{s = 1}^t &s\p{f_s\p{w_s} - f_s\p{w^*}} \\
\notag
&\leq \frac{kt}{\alpha}\p{42B^2\log\p{\frac{\lg(2T^3)}{\delta}} + 10\lambda^2}, \text{ and }
\end{align}
\begin{align}
\label{eq:with-lambda2}
\sum_{s = 1}^t &s\|w_s-w^*\|_2^2 \\
\notag
&\leq \frac{4}{\alpha}\sum_{s=1}^t s\p{f_s(w_s)-f_s(w^*)} + \frac{9kB^2t\log(\log_2(2T^3)/\delta)}{\alpha^2} \\
\notag
 &\leq \frac{kt^2}{\alpha^2}\p{177B^2\log\p{\log_2(2T)/\delta} + 40\lambda^2}.
\end{align}
Thanks to Lemma \ref{lem:irrep_lambda}, we can verify that these relations in fact
hold for all $t \leq T$ with total probability $1-4\delta$,
provided that $\lambda$ satisfies \eqref{eq:irrep_lambda}.

To complete the proof, we use the online-to-batch conversion bound from 
Proposition~\ref{prop:batch}. With probability $1-5\delta$, we then have
\begin{align*}
\lefteqn{\law\p{\frac{2}{T(T+1)}\sum_{t=1}^T tw_t} - \law\p{w^*}} \\
 &\leq \frac{4k}{\alpha t}\p{42B^2\log\p{\frac{\lg(2T^3)}{\delta}} + 10\lambda^2} + \frac{9kB^2}{\alpha t}\log\p{\frac{\lg(2T^3)}{\delta}} \\
 &= \oop{\frac{k}{\alpha t}\p{B^2\log\log(T) + \lambda^2}}.
\end{align*}
Since we can take $\lambda^2$ to be $\oop{\frac{B^2\log(d\log(T))}{1-24\rho^2}}$, 
we can attain a bound of $\oop{\frac{kB^2\log(d\log(T))}{\alpha T(1-24\rho^2)}}$, 
which completes the theorem.
\end{proof}

\end{appendix}

\end{document}